\documentclass[10pt]{amsart}

\usepackage{pdfsync, a4,fullpage, comment, enumerate, graphicx, 
latexsym, amsfonts, amsthm, amsmath, amssymb, amscd, mathrsfs, 
stmaryrd, amsopn, eepic, epic, eucal, nicefrac, wasysym, epsfig, 
psfrag, fancyhdr, enumitem}
\usepackage[unicode,breaklinks=true,colorlinks=true]{hyperref}
\usepackage[active]{srcltx}
\usepackage[utf8]{inputenc}
\usepackage[all]{xy}  

\newtheorem{theorem}{Theorem}[section]
\newtheorem{lettertheorem}{Theorem}
\newtheorem{prop}[theorem]{Proposition}

\newtheorem{lemma}[theorem]{Lemma}

\newtheorem{cor}[theorem]{Corollary}

\theoremstyle{definition}
\newtheorem{definition}[theorem]{Definition}

\def\mathclap#1{\text{\hbox to 0pt{\hss$\mathsurround=0pt#1$\hss}}}

\newcommand{\La}{\Lambda} 
\newcommand{\la}{\lambda}

\newcommand{\Om}{\Omega}

\newcommand{\om}{\omega}

\newcommand{\ep}{\epsilon} 
\newcommand{\De}{\Delta}
\newcommand{\de}{\delta}
\newcommand{\si}{\sigma} 

\newcommand{\R}{{\mathbb R}}
\newcommand{\Z}{{\mathbb Z}}
\newcommand{\Q}{{\mathbb Q}}
\newcommand{\N}{{\mathbb N}}

\newcommand{\norm}[1]{\| #1\|}
\newcommand{\abs}[1]{\left|#1\right|}
\newcommand{\deriv}[2]{\frac{\partial #1}{\partial #2}}

\renewcommand{\geq}{\geqslant}
\renewcommand{\leq}{\leqslant}

\newenvironment{remark}{\refstepcounter{theorem}\par\medskip\noindent{\bf
Remark~\thetheorem.}}{\unskip\nobreak\par\bigskip}

\newenvironment{example}{\refstepcounter{theorem}\par\medskip\noindent{\bf
Example~\thetheorem.}}{\unskip\nobreak\hfill\hbox{ $\oslash$}\par\bigskip}

\newenvironment{notn}{\refstepcounter{theorem}\par\medskip\noindent{\bf
Notation~\thetheorem.}}{\par\smallskip}


\newcommand{\D}{d_{\mf,[\vec{k}]}^{\nu,\bn}}
\newcommand{\Dp}{d_{\mf,[\vec{k}]}^{p,\nu,\bn}}
\newcommand{\Did}{d_{\mf,\vec{k}}^{\text{Id},\nu,\bn}}
\newcommand{\Dnomk}{d^{\nu,\bn}}
\newcommand{\Didnomk}{d^{\text{Id},\nu,\bn}}
\newcommand{\Ftoric}{F_{\text{toric}}}
\newcommand{\metricspace}{(\semitoric, \Dst )}
\newcommand{\metricspacemmk}{(\mmk,\D)}

\newcommand{\m}{\mathcal{M}}
\newcommand{\mmk}{\mathcal{M}_{\mf,[\vec{k}]}}
\newcommand{\mmknoperm}{\mathcal{M}_{\mf,\vec{k}}}
\newcommand{\mcompl}{\widetilde{\mathcal{M}}}
\newcommand{\mcomplmk}{\widetilde{\mathcal{M}}_{\mf,[\vec{k}]}}
\newcommand{\mf}{{m_f}}
\newcommand{\mele}{m} 
\newcommand{\melep}{m'}
\newcommand{\melepp}{m''}
\newcommand{\melen}{m_n}
\newcommand{\st}{\tau} 

\newcommand{\semitoric}{\mathcal{T}} 
\newcommand{\semitoriccpt}{\mathcal{T}^{\text{cpt}}} 
\newcommand{\semitoricm}{\mathcal{T}_\mf} 
\newcommand{\semitoricmk}{\mathcal{T}_{\mf,\vec{k}}} 
\newcommand{\semitoricmkperm}{\mathcal{T}_{\mf,[\vec{k}]}} 
\newcommand{\semitoricmkc}{\mathcal{T}_{\mf,(k_j+c)_{j=1}^\mf}} 

\newcommand{\semitoricz}{\semitoric_0}
\newcommand{\semitoricJC}{\semitoric_1}
\newcommand{\toric}{\mathcal{T}_\mathbb{T}} 

\newcommand{\perm}{\mathcal{S}^\mf}
\newcommand{\permkkp}{\mathcal{S}^\mf_{\vec{k},\vec{k}'}}

\newcommand{\semitoricmkprime}{\mathcal{T}_{\mf,\vec{k}'}} 
\newcommand{\permkkpp}{\mathcal{S}^\mf_{\vec{k},\vec{k}''}}
\newcommand{\permkppkp}{\mathcal{S}^\mf_{\vec{k}'',\vec{k}'}}

\newcommand{\Dplain}{d}
\newcommand{\Dpplain}{d^p}
\newcommand{\Didplain}{d^{\text{Id}}}
\newcommand{\Dqplain}{d^q}
\newcommand{\Dpqplain}{d^{p\circ q^{-1}}}

\newcommand{\stfull}{\big( [\dew], (h_j)_{j=1}^\mf, ((S_j)^\infty)_{j=1}^\mf \big)}
\newcommand{\stfullp}{\big( [\dewp], (h_j')_{j=1}^\mf, ((S_j')^\infty)_{j=1}^\mf \big)}
\newcommand{\stfulla}{\big( [\Aw], (h_j)_{j=1}^\mf, ((S_j)^\infty)_{j=1}^\mf \big)}
\newcommand{\stfullap}{\big( [\Awp], (h_j')_{j=1}^\mf, ((S_j')^\infty)_{j=1}^\mf \big)}
\newcommand{\Aw}{A_\text{w}}
\newcommand{\Awp}{A_\text{w}'}

\newcommand{\ts}{\sum_{i,j\geq0} \si_{i,j}X^i Y^j}
\newcommand{\tsp}{\sum_{i,j\geq0} \si_{i,j}'X^i Y^j}

\newcommand{\tsz}{\sum_{i,j\geq0} \si_{i,j}^0 X^i Y^j}
\newcommand{\bn}{{\{b_n\}_{n=0}^\infty}}
\newcommand{\bnp}{{\{b_n'\}_{n=0}^\infty}}
\newcommand{\rxy}{\R[[X,Y]]} 
\newcommand{\rxyz}{\R[[X,Y]]_0} 
\newcommand{\dts}{d^\bn_{\rxy}}
\newcommand{\dtsz}{d^\bn_0}

\newcommand{\symdiff}{\ominus}
\newcommand{\poly}{\text{Polyg}(\R^2)}
\newcommand{\lwpoly}{\mathcal{LW}\text{Polyg}_\mf(\R^2)}
\newcommand{\dpoly}{\mathcal{D}\text{Polyg}_{\mf,[\vec{k}]}(\R^2)}
\newcommand{\dpolynok}{\mathcal{D}\text{Polyg}_{\mf}(\R^2)}
\newcommand{\dpolynokz}{\mathcal{D}\text{Polyg}_{0}(\R^2)}
\newcommand{\distpoly}{d_{\text{P}}^\nu}
\newcommand{\distpolyp}{d_{\text{Polyg}}^{p,\nu}}
\newcommand{\distpolyid}{d_{\text{P}}^{\text{Id},\nu}}
\newcommand{\distpolyidplain}{d_{\text{P}}^{\text{Id}}}
\newcommand{\lwp}{(\De, (\ell_{\la_j}, \ep_j, k_j)_{j=1}^\mf)}
\newcommand{\lwpkp}{(\De, (\ell_{\la_j}, \ep_j, k_j')_{j=1}^\mf)}
\newcommand{\lwpp}{(\De', (\ell_{\la_j'}, \ep_j', k_j')_{j=1}^\mf)}
\newcommand{\lwpplus}{(\De, (\ell_{\la_j}, +1, k_j)_{j=1}^\mf)}
\newcommand{\lwpplusone}{(\De^1, (\ell_{\la_j}^1, +1, k_j^1)_{j=1}^\mf)}
\newcommand{\lwpplustwo}{(\De^2, (\ell_{\la_j}^2, +1, k_j^2)_{j=1}^\mf)}
\newcommand{\dew}{\De_\text{w}}
\newcommand{\dewp}{\De_\text{w}'}
\newcommand{\Gmf}{G_\mf}

\newcommand{\tk}{\mathcal{G}}

\newcommand{\vertr}{\text{Vert}(\R^2)}


\newcommand{\dmu}{\,\mathrm{d}\mu}
\newcommand{\dnudmu}{\frac{\mathrm{d}\nu}{\mathrm{d}\mu}}


\newcommand{\lwppluskp}{(\De, (\ell_{\la_j}, +1, k_j')_{j=1}^\mf)}
\newcommand{\apoly}{\mathcal{P'}_{\mf,[\vec{k}]}}
\newcommand{\bpoly}{\mathcal{P''}_{\mf,[\vec{k}]}}
\newcommand{\cpoly}{\mathcal{P'''}_{\mf,[\vec{k}]}}
\newcommand{\mm}{\mathcal{M}_{\mf}}
\newcommand{\mmka}{\mathcal{M'}_{\mf,[\vec{k}]}}
\newcommand{\mmkb}{\mathcal{M''}_{\mf,[\vec{k}]}}
\newcommand{\mmkc}{\mathcal{M'''}_{\mf,[\vec{k}]}}
\newcommand{\lwpplusa}{(A, (\ell_{\la_j}, +1, k_j)_{j=1}^\mf)}
\newcommand{\lwpplusaeq}{([A], (\ell_{\la_j}, +1, k_j)_{j=1}^\mf)}
\newcommand{\lwpplusakp}{(A, (\ell_{\la_j}, +1, k_j')_{j=1}^\mf)}
\newcommand{\lwpplusb}{(B, (\ell_{\la_j}, +1, k_j)_{j=1}^\mf)}
\newcommand{\polycompl}{\widetilde{\mathcal{D}\text{Polyg}}_{\mf,[\vec{k}]}(\R^2)}
\newcommand{\akep}{A^k_{\vec{\ep}}}
\newcommand{\lwppplus}{(\De', (\ell_{\la_j}, +1, k_j)_{j=1}^\mf)}


\newcommand{\Dst}{\mathcal{D}^{\nu,\bn}}

\newcommand{\bigo}{\mathcal{O}}



\title{Moduli spaces of semitoric systems}
\author{Joseph Palmer}

\begin{document}

\begin{abstract}
 Recently Pelayo-V\~{u} Ng\d{o}c classified simple semitoric integrable 
 systems in terms of five symplectic invariants. Using this 
 classification we define a family of metrics on the space of 
 semitoric integrable systems.
 The resulting metric space is incomplete and we construct the completion.
\end{abstract}

\maketitle

\section{Introduction}\label{sec_intro}

Toric integrable systems are classified by the image of their momentum map,
which is a Delzant polytope.
In~\cite{PePRS2013} Pelayo-Pires-Ratiu-Sabatini
define a metric on the space of Delzant polytopes via the volume of the 
symmetric difference and pull this back to produce a metric on the 
moduli space of toric integrable systems.  
The construction of this metric is related to the Duistermaat-Heckman measure~\cite{DuHe1982}.

In~\cite{PeVNsemitoricinvt2009, PeVNconstruct2011}, Pelayo and V\~{u} Ng\d{o}c provide a 
complete classification for a broader class of integrable systems, those known 
as semitoric, in terms of a collection of several invariants.  
 A \emph{semitoric integrable system}~\cite{PeVNsemitoricinvt2009}
 is a 4-dimensional, connected, symplectic 
 manifold $(M,\om)$ with a momentum map $F=(J,H):M\to\R^2$ such that:
 \begin{enumerate}[noitemsep]
  \item the function $J$ is a proper momentum map for a Hamiltonian circle action on $M$;
  \item $F$ has only non-degenerate singularities (as in Williamson~\cite{Wi1996})
        without real-hyperbolic blocks.\label{cond_nondeg}
 \end{enumerate}

Notice that though semitoric systems are required to be 4-dimensional
there is much more freedom in the choice of momentum map compared to toric systems and $M$ 
is not required to be compact (the non-compact toric case is treated by Karshon-Lerman~\cite{KaLe2014}).
Condition \eqref{cond_nondeg} implies that if $p\in M$ 
is a critical point of $F$ then there exists some $2\times2$ matrix $B$ such that 
$\widetilde{F} = B\circ(F-F(p))$ is given by one of three standard forms.
By Eliasson~\cite{El1984,El1990}
there exists a local symplectic chart $(x,y,\eta,\xi)$ centered at $p$ 
which puts $\widetilde{F}$ into one of the three possible singularity types:
\begin{enumerate}[nosep]
 \item transversally elliptic singularity: $\widetilde{F}(x,y,\eta,\xi) = (\eta + \bigo (\eta^2), \frac{x^2+\xi^2}{2}) + \bigo((x,\xi)^3)$;
 \item elliptic-elliptic singularity: $\widetilde{F}(x,y,\eta,\xi) = (\frac{x^2 + \xi^2}{2},\frac{y^2+\eta^2}{2}) + \bigo((x,\xi,y,\eta)^3)$;
 \item focus-focus singularity: $\widetilde{F}(x,y,\eta,\xi) = (x\xi + y\eta, x\eta - y\xi) + \bigo((x,\xi,y,\eta)^3)$.
\end{enumerate}

A semitoric integrable system $(M,\om,F=(J,H))$ is said to be 
\emph{simple} if there is at most one focus-focus critical point in 
$J^{-1}(x)$ for all $x\in\R$.
A similar (but weaker)
assumption is generic according to Zung~\cite{Zu1996}, that each fiber 
$F^{-1}(c)$ for $c\in\R^2$ contains at most one critical point $p\in M$.
Any semitoric system has only finitely many focus-focus critical points (See V\~{u} Ng\d{o}c~\cite{VN2007})
so we will denote them by $m_1,\ldots,m_\mf\in M$ and the associated
singular values are denoted $c_j = F(m_j)$, $j=1, \ldots, \mf$. 
All semitoric systems studied in this article are assumed to be simple
and we label them such that $J(m_1)<\ldots<J(m_\mf)$.
Suppose that $(M_i, \om_i, F_i=(J_i,H_i))$ is a semitoric system
for $i=1,2$.  An \emph{isomorphism of semitoric systems} is 
a symplectomorphism $\phi:M_1 \to M_2$ such that 
$\phi^* (J_2, H_2) = (J_1, f(J_1,H_1))$ where $f:\R^2\to\R$ is a smooth 
function such that $\deriv{f}{H_1}$ nowhere vanishes (it is either always 
strictly positive
or always strictly negative).  We denote by 
$\semitoric$ the space of simple semitoric systems modulo isomorphism.

The goal of this paper is to define a metric on the space of invariants and 
thus induce a metric on $\semitoric$, thereby addressing Problem 2.43 
from Pelayo-V\~{u} Ng\d{o}c~\cite{PeVNfirst2012}, in which the authors ask for a description of
the topology of the moduli space of semitoric systems.  Problems 2.44 and
2.45 in the same article are related to the closure of $\semitoric$ in the
moduli space of all integrable systems, so in this paper we also compute the
completion of the space of invariants, which corresponds to the completion 
of $\semitoric$, in order to lay the foundation to begin work on these problems. 
The main result of this paper, Theorem~\ref{thm_main}, states that the function
we propose is a metric on $\semitoric$ and describes the completion of the space
of invariants.  Theorem~\ref{thm_main} is stated in Section~\ref{sec_mainthm}
after we have defined the metric.

\subsection{Notation Index}
Here we list some of the notation used in this article:
\vspace{5pt}
\begin{center}
\begin{tabular}{ll}
 $\semitoric$              &    Moduli space of simple semitoric systems, Section~\ref{sec_intro}\\
 $\semitoricm$             &    Elements of $\semitoric$ with $\mf$ focus-focus singular points, Section~\ref{sec_mf}\\
 $\semitoricmk$            &    Elements of $\semitoric$ in twisting index class $\vec{k}\in\Z^\mf$, Definition~\ref{def_semitoricmk} \\
\vspace{8pt}
 $\semitoricmkperm$        &    Elements of $\semitoric$ in generalized twisting index class $\vec{k}\in\Z^\mf$, Definition~\ref{def_semitoricmk} \\
 $\m$                      &    Semitoric lists of ingredients, Definition~\ref{def_listofingre} \\
 $\mm$                     &    Semitoric lists of ingredients with complexity $\mf$, Definition~\ref{def_listofingre} \\
 $\mmknoperm$              &    Elements of $\mm$ in twisting index class $\vec{k}\in\Z^\mf$, Section~\ref{sec_topology}\\
 $\mmk$                    &    Elements of $\mm$ in generalized twisting index class $\vec{k}\in\Z^\mf$, Definition~\ref{def_semitoricmk}\\
\vspace{8pt}
 $\mcompl$                 &    The completion of $\m$, Definition~\ref{def_mcomplmk}\\
 $\poly$                   &    Rational convex polygons in $\R^2$, Section~\ref{sec_affinetwist}\\
 $\lwpoly$                 &    Labeled weighted polygons of complexity $\mf$, Definition~\ref{def_lwpoly} \\
 $\lwp$                    &    Typical element of $\lwpoly$, Definition~\ref{def_lwpoly}\\
\vspace{8pt}  
 $\dpolynok$               &    Labeled Delzant semitoric polygons of complexity $\mf$, Definition~\ref{def_delzsemi} \\
 $\Dst$                    &    Metric on $\semitoric$, Definition~\ref{def_fullmetric}\\
 $\Dnomk$                  &    Metric on $\m$, Definition~\ref{def_fullmetric}\\
 $\D$                      &    Metric on on $\mmk$, Definition~\ref{def_metric}\\
\vspace{8pt}
 $\Dp$                     &    Comparison with alignment $p\in\perm$, Definition~\ref{def_metric}\\
 $\Gmf\times\tk$           &    The group $\{-1,+1\}^\mf\times\{T^k \mid k\in\Z\}$, Section~\ref{sec_ggaction}\\
 $\bn$                     &    Linear summable sequence, Definition~\ref{def_metricts}\\
 $\permkkp$                &    Appropriate permutations for $\vec{k}, \vec{k'}\in\Z^\mf$, Definition~\ref{def_permkkp}

\end{tabular}
\end{center}

\section{Background: The classification of semitoric integrable systems}\label{sec_classthm}
 Since it is necessary for the construction of the metric, in this
 section we describe in detail the five invariants which completely classify
 simple semitoric systems.
 Compact toric integrable systems are classified in terms of Delzant polytopes.  
 In the semitoric case a polygon plays a role but the complete invariant 
 must contain more information.  
 Loosely speaking, the complete invariant of semitoric systems is a collection 
 of convex polygons in $\R^2$ (which may not be compact) each with a finite number 
 of distinguished points corresponding to the focus-focus singularities 
 labeled by a Taylor series and an integer (See Figure \ref{fig_complinvt}).

\subsection{The number of singular points invariant}\label{sec_mf}
 In~\cite[Theorem 1]{VN2007} V\~{u} Ng\d{o}c proves that any (simple or not) semitoric 
 system has finitely many focus-focus singular points.  Thus, to a system
 we may associate an integer $0\leq \mf < \infty$ which is the total 
 number of focus-focus points in the system.  The singular 
 points are preserved by isomorphism so this is an invariant of the system.
  For any nonnegative integer $\mf\in\Z_{\geq0}$ let $\semitoricm$ denote 
  the collection of simple semitoric systems with $\mf$ focus-focus points
  modulo semitoric isomorphism.

 Toric systems, which have no focus-focus singular points, correspond to a 
 proper subset of $\semitoricz$.
 There is some subtlety in this correspondence because of the difference 
 between toric and semitoric isomorphisms, see Section~\ref{sec_toric}, 
 but it can be seen that the topology on toric systems from Pelayo-Pires-Ratiu-Sabatini~\cite{PePRS2013}
 is compatible with the topology defined in the present article.
 This is the content of Corollary~\ref{cor_toric}).
 The elements of $\semitoricJC$ are known as a Jaynes-Cummings type 
 systems, as in Pelayo-Le Floch~\cite{LFPeVN2014}.  An important
 example of a Jaynes-Cummings type system is the coupled spin-oscillator 
 which is defined in~\cite{Cu1965, JaCu1963} and studied in detail by V\~{u} Ng\d{o}c and Pelayo-V\~{u} Ng\d{o}c in~\cite{VN2007, PeVNspin2012}.
 This system has its origins in the study of quantum optics and is a prime
 example of the importance of semitoric systems in physics.

  \begin{figure}
 \centering
 \includegraphics[height=120pt]{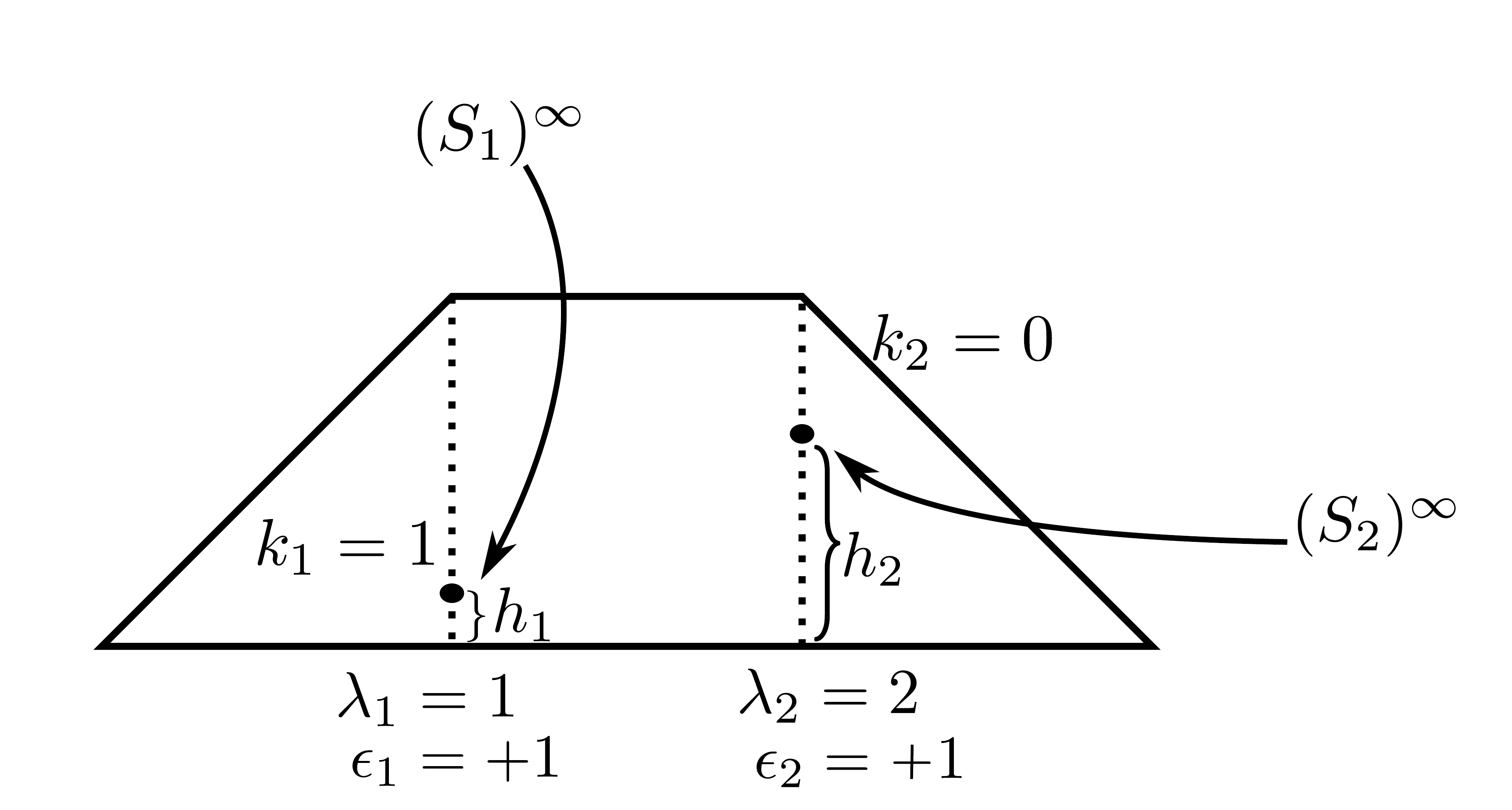}
 \caption{The complete semitoric invariant is a collection of polygons with distinguished
 points $\{c_1, \ldots, c_{\mf}\}$ each labeled with extra information: a Taylor series
 $(S_j)^\infty$, an integer $k_j$ known as the twisting index, and 
 an element $\ep_j\in\{-1,+1\}$ known as the cut direction.  There is one polygon in
 the family for each possible choice of cut directions and each allowed choice of 
 twisting indices.}
 \label{fig_complinvt}
\end{figure}
 
\subsection{The Taylor series invariant}

The next invariant we will study completely classifies the structure of 
a focus-focus critical point in the neighborhood of a fiber up to isomorphism,
originally formulated in V\~{u} Ng\d{o}c~\cite{VN2003}.
It is defined in terms of the length of certain flow lines of the Hamiltonian 
vector fields for the components of the momentum map, and can also be viewed
as the germ of the generating function at the focus-focus point.  The details can be 
found in~\cite{VN2003, PeVNsymplthy2011}.

\begin{definition}
 Let $\rxy$ refer to the algebra of real formal power series in two variables and 
 let $\rxyz\subset \rxy$ be the subspace of series $\ts$ which have $\si_{0,0}=0$ 
 and $\si_{0,1} \in [0,2\pi)$.
\end{definition}

 The Taylor series invariant is one element of $\rxyz$ for each of the $\mf$ focus-focus points.

 \subsection{The affine invariant and the twisting index invariant}\label{sec_affinetwist}
The affine invariant is similar to the polygon from Delzant's result, except in this case we 
instead have a family of polygons related by specific linear transformations.  The twisting 
index describes how each critical point sits with respect to a privileged momentum map.  
These two invariants will be described together because the twisting indices which label 
each critical point will be defined only up to the addition of a common integer related 
to the choice of polygon.

A \emph{convex polygon} is the intersection in $\R^2$ of (finitely or infinitely many) 
 closed half planes such that on each compact subset of $\R^2$ there are at 
 most finitely many corner points.  A convex polygon is rational if each edge 
 is directed along a vector with rational coefficients.
 We denote the set of all rational convex polygons by $\poly$.
For $\la\in\R$ let $\ell_\la = \{(x,y)\in\R^2 \mid x=\la\}$ and let $\vertr=\{\ell_\la \mid \la\in\R\}$.

\begin{definition}\label{def_lwpoly}
 A \emph{labeled weighted polygon of complexity $\mf\in\Z_{\geq0}$} is an element 
 \[
  \De_{\mathrm{weight}}=\lwp\in \poly\times\big(\vertr\times\{-1,+1\}\times\Z\big)^\mf
 \]
 with
 \[
  \min_{s\in\De}\pi_1 (s)<\la_1<\la_2<\ldots<\la_\mf<\max_{s\in\De}\pi_1(s)
 \]
 where $\pi_1:\R^2\to\R$ is the projection onto the $x$-coordinate.
 We denote the space of labeled weighted polygons of complexity $\mf$ by $\lwpoly$,
 and we use the simplified notation $\dew = \De_{\textrm{weight}}$.
\end{definition}

Notice there is a triple $(\ell_{\la_j},\ep_j,k_j)$ associated with the 
singular point $c_j$ for each $j=1,\ldots,\mf$.  These are related to the
critical points of the semitoric system as follows:
$\la_j = \pi_1(c_j)$; $\ep_j = +1$ if the cut at $c_j$ goes up (in the positive $H$ direction) and 
$\ep_j = -1$ if the cut at $c_j$ goes down; and
$k_j$ is the twisting index of $c_j$.

Here we will briefly review how the affine invariant is produced by V\~{u} Ng\d{o}c~\cite{VN2007}.
Let $(M,\om,F = (J,H))$ be a semitoric system.
Consider the set $F(M)\subset\R^2$.  In the toric case this is the Delzant polygon.
Let $c_1,\ldots,c_\mf\in F(M)$ denote the images of the 
focus-focus points and let $B_r = \text{Int}(F(M))\setminus\{c_1,\ldots,c_\mf\}$
which is precisely the regular values of $F$~\cite[Remark 3.2]{PeVNsemitoricinvt2009}.
For each $j=1,\ldots,\mf$ remove from $B_r$ the line segment 
$\ell_{\la_j}^{\ep_j}$ which starts at $c_j$ and goes upwards if 
$\ep_j= 1$ and downwards if $\ep_j=-1$ to form the set $B_r^{\vec{\ep}}$,
where $\vec{\ep} = (\ep_1, \ldots, \ep_\mf)$.
Now, $B_r^{\vec{\ep}}$ is a simply connected set of regular values of $F$ so define
a global toric momentum map 
\[
 \Ftoric:F^{-1}(B_r^{\vec{\ep}})\to\R^2
\]
and define $\De = \overline{\Ftoric(B_r^{\vec{\ep}})}$, the closure.
The polygon produced depends on the choice of 
$(\ep_j)_{j=1}^\mf$ and of the toric momentum map on $B_r^{\vec{\ep}}$.
The distinguished points in each polygon are the image of the 
focus-focus singular points under $\Ftoric$. Of course, we are omitting 
many details in this explanation.  Again, the interested reader should 
see the papers of Pelayo-V\~{u} Ng\d{o}c~\cite{PeVNsemitoricinvt2009,PeVNconstruct2011}.

For $k\in\Z$ let $T^k$ be given by 
\begin{equation}\label{eqn_tk}
T^k = \left(\begin{array}{cc}1 & 0\\k & 1\end{array}\right) \in \text{GL}(2,\Z).
\end{equation}

\begin{definition}\label{def_cornertypes_delzant}
 Let $\De\in\poly$ be a rational convex polygon.  We say that a 
 \emph{vertex} of $\De$ is a point in the boundary $\partial\De$
 where the meeting edges are not co-linear.  A point is said to 
 be in the \emph{top-boundary} of $\De$ if it is the top end of 
 a vertical segment formed by intersecting $\De$ with a vertical line.
 Suppose that $z$ is a vertex of $\De$ and $(u,v)$ are a pair of 
 primitive integral vectors starting at $z$ and extending along 
 the direction of the edges which meet at $z$ in the order
 such that $\mathrm{det}(u,v)\geq 0$.
 Then the point $z$ is called a \emph{corner} and is said to
 \begin{enumerate}
  \item satisfy the \emph{Delzant condition} if $\det(u,v)=1$;
  \item satisfy the \emph{hidden Delzant condition} if it belongs to the top boundary and $\det(u,T^1v)=1$;
  \item satisfy the \emph{fake condition} if it belongs to the top boundary and $\det(u,T^1v)=0$.
 \end{enumerate}
 Notice that it is possible for a corner to satisfy both the
 Delzant and the fake conditions simultaneously.
 A rational convex polygon $\De\in\poly$ is called \emph{Delzant} if 
 it is compact and every corner is a Delzant corner.  This implies that
 it is cut out by finitely many half-planes.
\end{definition}

\subsection{The action of \texorpdfstring{$\Gmf\times\tk$}{the symmetry groups}}\label{sec_ggaction}
 
 In order for isomorphic systems to produce the same invariants, 
 we must consider the collection of invariants we have so far modulo a group action.  
 \begin{notn}
  Throughout this article when referring to an $\mf$-tuple such as $(k_j)_{j=1}^\mf$ or $(\ep_j)_{j=1}^\mf$ for simplicity we will sometimes use vector notation.  That is, we may refer to these $\mf$-tuples as $\vec{k}$ and $\vec{\ep}$, respectively.  These vectors will always have length $\mf$.
 \end{notn} 
 Let $\Gmf = \{-1,+1\}^{\mf}$ and $\tk = \{T^k\mid k\in\Z\}$ where 
 $T^k$ is as in Equation \eqref{eqn_tk}.  
 Given $\ell\in\vertr$ where $\ell = \pi_1^{-1}(\la)$ for some $\la\in\R$ define
 $t_{\ell}^k:\R^2\to\R^2$ by
 \[
  t_{\ell}^k (x,y) = \left\{ \begin{array}{ll}(x,y) & , x\leq \la\\ (x, k(x-\la)+y) &, x>\la. \end{array}\right.
 \]
 That is, $t_{\ell}^k$ acts as the identity on the left of $\ell$ and,
 after a translation of coordinates which moves the origin onto $\ell$,
 acts as $T^k$ to the right of $\ell$.  For $\vec{u}\in\Z^\mf$ let
 $t_{\vec{u}}=t^{u_1}_{\ell_1} \circ \cdots \circ t^{u_{\mf}}_{\ell_\mf}$
 where $\ell_j = \ell_{\la_j}$.
 We define the action of $\Gmf\times\tk$ on $\lwpoly$ by
\begin{equation}\label{eqn_ggaction}
 ((\ep_j')_{j=1}^\mf, T^{k})\cdot\lwp = (t_{\vec{u}}(T^k\De),\left(\ell_{\la_j}, \ep_j'\ep_j, k+k_j\right)_{j=0}^\mf)
\end{equation}
where $\vec{u}=\left( \nicefrac{(\ep_j-\ep_j\ep_j')}{2}\right)_{j=1}^\mf.$
\begin{remark}\label{rmk_e}
 Notice that if $\lwp$ is changed via the action of $\Gmf$ to have 
 $\ep'_j \in \{-1,1\}$ instead of $\ep_j$ for each $j=1,\ldots,\mf$
 then the new polygon is $t_{\vec{u}}(\De)$ where 
 $\vec{u}=(\nicefrac{(\ep_j-\ep'_j)}{2})_{j=1}^\mf\in\{-1,0,1\}^\mf$.
 Thus, the orbit of $\De$ under the action of $\Gmf$ may be written 
 as $( t_{\vec{u}}(\De))_{\vec{u}\in\{0,1\}^\mf}$ if $\De$ is the 
 polygon with $\ep_j=+1$ for all $j=1,\ldots,\mf$.
\end{remark}
The orbit under this action is the appropriate invariant.
The choice of cut direction and constant by which to shift the 
twisting indices parameterize the collection of all polygons in a given orbit.
Notice that the action of $t_{\vec{u}}$ does not necessarily preserve convexity,
but it will in the case of the polygons we are interested in (Proposition \ref{prop_convex}).
\begin{definition}\label{def_delzsemi}
 A \emph{labeled Delzant semitoric polygon} is the equivalence class 
 \[
  [\dew]\in\lwpoly / (\Gmf\times\tk)
 \]
 of an element $\dew=\lwpplus$ satisfying the following.
 \begin{enumerate}
  \item The intersection of $\De$ and any vertical line is either compact or empty;\label{part_intvert}
  \item each $\ell_{\la_j}$ intersects the top boundary of $\De$;
  \item \label{item_three} each point in the top boundary which is also in some $\ell_{\la_j}$ satisfies either the hidden or fake
   condition;
  \item all other corners satisfy the Delzant condition.
 \end{enumerate}
 The corners from item~\eqref{item_three} which are in the intersection of the top boundary of $\De$ and one of the $\ell_{\la_j}$
 which satisfy the hidden or fake condition
 are known as \emph{hidden} and \emph{fake corners}, respectively, and the other corners are
 known as \emph{Delzant corners}.
 The space of labeled Delzant semitoric polygons is denoted by \[\dpolynok \subset \lwpoly / (\Gmf\times\tk).\]
\end{definition}

Notice that while it is possible for a corner to satisfy both the fake and Delzant conditions,
the fake corners can be distinguished from the Delzant corners in a labeled Delzant
semitoric polygon because the fake corners are in the intersection of the top boundary
of $\De$ with some $\ell_{\la_j}$ and the Delzant corners are not.

\begin{remark}
 For each focus-focus point $c_j$ the integer $k_j$ depends on the choice
 of representative of $[\dew]$, but given any two focus-focus points
 $c_j$ and $c_i$ the difference of the associated integers $k_j - k_i$
 is preserved under the action of $\Gmf\times\tk$ so it is the same for any choice of representative.
\end{remark}

Any set satisfying Condition~\eqref{part_intvert} of Definition~\ref{def_delzsemi}
is said to have \emph{everywhere finite height}.
The following Proposition is a restatement of~\cite[Lemma 4.2]{PeVNconstruct2011}. Since a 
preferred representative $\De$ can be chosen with $\vec{\ep}=(+1,\cdots,+1)$ we see that 
the orbit of $\De$ under $\Gmf$ is a subset of $\poly$.
\begin{prop}\label{prop_convex}
 Suppose $\dew\in\lwpoly$ satisfies items (1)-(4) in Definition \ref{def_delzsemi} and 
 $\dew=[\lwpplus]$. Then for each $\vec{u}\in\{0,1\}^\mf$ the set $t_{\vec{u}}(\dew)$ is convex.
\end{prop}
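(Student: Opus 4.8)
The plan is to reduce the statement to a local, vertex-by-vertex analysis, since the transformation $t_{\vec u}$ only modifies the polygon to the right of the cut lines $\ell_{\la_1}, \ldots, \ell_{\la_\mf}$ and acts by elements of $\mathrm{GL}(2,\Z)$ there. Because $t_{\vec u}$ is piecewise affine and continuous along each $\ell_{\la_j}$ (the two linear pieces agree on $\ell_{\la_j}$ by construction of $t^k_\ell$), convexity of $t_{\vec u}(\dew)$ can fail only along the lines $\ell_{\la_j}$ themselves or at the finitely many corner points of $\De$. Away from these lines, $t_{\vec u}$ is locally an affine isomorphism, so convexity is preserved there automatically; the content is entirely what happens where $\ell_{\la_j}$ meets the boundary $\partial\De$.

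First I would set up notation: by the footnote in Definition~\ref{def_delzsemi} we may take the preferred representative with $\vec\ep = (+1,\ldots,+1)$, so each $\ell_{\la_j}$ meets the top boundary of $\De$. Fix $\vec u\in\{0,1\}^\mf$ and note $t_{\vec u}$ is a composition of the commuting maps $t^{u_j}_{\ell_{\la_j}}$, each either the identity (if $u_j=0$) or the map that is the identity left of $\ell_{\la_j}$ and $T^1$ right of it (if $u_j=1$). Since these maps fix the line $\ell_{\la_j}$ pointwise and the $\la_j$ are strictly ordered and lie strictly between $\min\pi_1(\De)$ and $\max\pi_1(\De)$, the images $t_{\vec u}(\ell_{\la_j}\cap\De)$ are still vertical segments at $x=\la_j$, and they partition $t_{\vec u}(\De)$ into finitely many pieces each of which is an affine image of a slab of $\De$ — hence convex. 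So it remains to check convexity across each dividing line $\ell_{\la_j}$.

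Next I would do the crossing analysis at a point $z\in\ell_{\la_j}\cap\partial\De$. On the top boundary, $z$ is by hypothesis either a hidden Delzant corner or a fake corner; on the bottom boundary $\ell_{\la_j}\cap\De$ need not meet a vertex at all (and if it did it would be a Delzant corner, where the edges are transverse). The key computation is the behavior of the outward edge directions under $T^1$: writing the edges meeting the boundary near $z$ as primitive integral vectors $u$ (on the left, unchanged) and $v$ (on the right, sent to $T^1 v$), I would verify that the turning is still convex, i.e. the local picture remains the intersection of half-planes. For the \emph{top} boundary at a fake corner, $\det(u, T^1 v)=0$ means $u$ and $T^1 v$ are parallel, so the two edges become collinear after the transformation and $z$ is no longer a corner — convexity is clearly maintained. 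At a hidden Delzant corner, $\det(u,T^1 v)=1$, so after applying $t_{\vec u}$ the vertex becomes an honest Delzant corner, again convex. Along the \emph{bottom} boundary the slope only decreases under $T^1$ (since $T^1$ adds a nonnegative multiple of the $x$-component to the $y$-component, and along the bottom one moves in the direction of increasing $x$), which preserves convexity of the lower boundary; I would make this precise by comparing the slopes of the two adjacent bottom edges before and after. Finally, one must check the interaction when several cuts $\ell_{\la_j}$ are active simultaneously, but since the $t^{u_j}_{\ell_{\la_j}}$ commute and have disjoint "active half-planes" in the relevant sense (each affects only the region $x>\la_j$, and the analysis at $\ell_{\la_i}$ sees the cumulative linear map $T^{u_1+\cdots+u_{i-1}}$ on its left and $T^{u_1+\cdots+u_i}$ on its right, both in $\tk$), the same local computation applies verbatim with $T^1$ replaced by $T^1$ acting on already-transformed edges.

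The main obstacle I expect is the bottom-boundary and the simultaneous-cuts bookkeeping: one must be careful that the everywhere-finite-height hypothesis (Condition~(1)) together with the ordering of the $\la_j$ really does prevent pathologies such as a bottom edge becoming vertical or the transformed polygon developing an edge that folds back, and that the convexity verified locally at each $\ell_{\la_j}$ and at each corner genuinely glues to global convexity — for which I would invoke that a closed set whose boundary is locally convex and which has finite height in every vertical direction (so no "wrap-around") is globally convex. Since the Proposition is stated as a restatement of \cite[Lemma 4.2]{PeVNconstruct2011}, I would also simply cite that lemma for the crux if a self-contained argument becomes unwieldy.
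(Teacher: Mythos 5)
The paper does not actually give a proof of Proposition~\ref{prop_convex}: it is stated to be a restatement of Lemma~4.2 of \cite{PeVNconstruct2011}, and the paper simply cites that result. Your proposal attempts a self-contained argument, which is a genuinely different and more informative route. The overall skeleton is sound: $t_{\vec u}$ is piecewise affine with constant linear part $T^{u_1+\cdots+u_i}$ on each slab $\la_i<x<\la_{i+1}$, these pieces differ across $\ell_{\la_i}$ by a further $T^{u_i}$ (up to a vertical translation that is irrelevant to convexity), so one indeed only needs to check convexity is preserved across each cut line, and the top-boundary computation with fake/hidden Delzant corners is correct and is exactly where conditions (2)--(4) of Definition~\ref{def_delzsemi} come in.

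However, your bottom-boundary analysis has a sign error that also inverts the logic of the argument. Writing $T^1=\left(\begin{smallmatrix}1&0\\1&1\end{smallmatrix}\right)$, a rightward edge vector $(a,b)$ with $a>0$ is sent to $(a,a+b)$, so the slope changes from $b/a$ to $1+b/a$: it \emph{increases} by $1$, not decreases. Fortunately this is exactly what you want: the lower boundary of a convex set is the graph of a convex function, so its slope must be nondecreasing in $x$, and a slope jump $m_L\le m_R\mapsto m_L\le m_R+1$ across $\ell_{\la_j}$ keeps it nondecreasing. Had the slope genuinely decreased, as you claim, that would \emph{break} convexity of the lower boundary. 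So the conclusion is correct but the justification you give would actually prove the opposite. (Symmetrically, on the top boundary the slope must be nonincreasing, and the $+1$ jump there is precisely the danger that conditions (2)--(4) are designed to neutralize.) You should also state the local-to-global step more carefully: what you need is that $t_{\vec u}(\De)$ is a closed connected (indeed simply connected, being a homeomorphic image of $\De$) set whose boundary is locally supporting at every point, and then invoke Tietze's local-to-global convexity theorem. With those two repairs the self-contained argument goes through; otherwise, as you note, the cleanest course is simply to cite \cite[Lemma~4.2]{PeVNconstruct2011}, which is what the paper does.
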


\subsection{The volume invariant}\label{sec_h}
 The action of $\Gmf\times\tk$ can change the vertical position of the 
 images of the focus-focus points, but their height with respect to the
 bottom of the polygon is preserved.
 
 \begin{definition}\label{def_h}
 Suppose $[\dew]\in \dpolynok$ with associated toric momentum map $F$.
 For $j=1,\cdots, \mf$ we define $0<h_j<\text{length}(\dew\cap\ell_{\la_j})$
 by 
 \[
  h_j = F(m_j)-\min_{s\in\De\cap\ell_{\la_j}} \{\pi_2 (s)\}
 \]
 where $\pi_2:\R^2\to\R^2$ is the projection onto the second coordinate and
 $\lwp\in[\dew]$ is any representative.
 \end{definition}
 This is well defined for any choice of polygon in the same equivalence 
 class by~\cite[Lemma 5.1]{PeVNsemitoricinvt2009}. 
 The word ``volume'' is used here because $h_j$ can also be viewed as the
 volume of a specific submanifold of $M$~\cite{PeVNsemitoricinvt2009}.

 \subsection{The classification theorem}
 Now that we have defined all of the invariants we can state the result of 
 Pelayo-V\~{u} Ng\d{o}c found in~\cite{PeVNsemitoricinvt2009, PeVNconstruct2011}.
 \begin{definition}[Pelayo-V\~{u} Ng\d{o}c~\cite{PeVNconstruct2011}]\label{def_listofingre}
  A \emph{semitoric list of ingredients} is
  \begin{enumerate}
   \item \label{ingre_mf} a nonnegative integer $\mf$;
   \item \label{ingre_poly} a labeled Delzant semitoric polygon $[\dew]=[\lwp]$ of complexity $\mf$;
   \item \label{ingre_hj} a collection of $\mf$ real numbers $h_1, \ldots, h_\mf \in \R$ such that 
         $0<h_j<\text{length}(\pi_2(\De\cap\ell_{\la_j}))$ for each $j=1,\ldots,\mf$; and
   \item a collection of $\mf$ Taylor series $(S_1)^\infty, \ldots, (S_{\mf})^\infty\in\rxyz$.
  \end{enumerate}
  In other words, a semitoric list of ingredients is a nonnegative integer $\mf$ and an 
  element of $\dpolynok \times \R^\mf \times \rxyz^\mf$ where the $j^{\text{th}}$ element of
  $\R$ must be in the interval $(0, \text{length}(\pi_2(\De\cap\ell_{\la_j})))$. 
  Let $\m$ denote the collection of all semitoric lists of ingredients and let 
  $\mm$ be lists of ingredients with Ingredient \eqref{ingre_mf} equal to the 
  nonnegative integer $\mf$.
 \end{definition}
 
 Notice how the ingredients interact in Definition~\ref{def_listofingre}.  
 Ingredient~\eqref{ingre_mf} determines the number of copies of each
 ingredient associated to the focus-focus points (the triple $(\ell_{\la_j}, \ep_j, k_j)$ and the real number $h_j$) and Ingredient~\eqref{ingre_hj} is in an interval determined by
 Ingredient~\eqref{ingre_poly}.
 

 \begin{theorem}[Pelayo-V\~{u} Ng\d{o}c~{\cite[Theorem 4.6]{PeVNconstruct2011}}]\label{thm_class}
  There exists a bijection between the set of simple semitoric integrable 
  systems modulo semitoric isomorphism and $\m$, the set of semitoric lists
  of ingredients.  In particular, the mapping
\[
  \Phi\colon  \semitoric     \to     \m
  \]
  which sends the isomorphism class of semitoric systems 
  $[(M,\om,(H,J))]$ to the collection of associated semitoric
  ingredients $\big([\lwp], (h_j)_{j=1}^\mf, ((S_j)^\infty)_{j=1}^\mf\big)$,
  as described above,
 is a bijection.
 \end{theorem}
 
\section{Construction of metric and statement of main theorem}\label{sec_mainthm}
 To define a metric on $\semitoric$ we will first define a metric on each 
 invariant and then we will combine all of these metrics to form a metric
 on $\m$.  Finally, we will pull this metric back by the map in 
 Theorem~\ref{thm_class} to produce a metric on the space of semitoric systems.
 This is the same strategy used by Pelayo-Pires-Ratiu-Sabatini in~\cite{PePRS2013}.
\subsection{Comparing the Taylor series invariant}\label{sec_comparets}
 First we will define a metric on the Taylor series invariant. For $\ts\in\rxyz$
 note that the term $\si_{0,1}$ should
 actually be regarded as an element of $S^1 = \R/2\pi\Z$. This can be seen from the
 construction in V\~{u} Ng\d{o}c~\cite{VN2003}.

\begin{definition}\label{def_metricts}
 Suppose that $\bn$ is a sequence such that $b_n\in(0,\infty)$ for 
 each $n\in\Z_{\geq0}$ and $\sum_{n=0}^\infty nb_n < \infty$.  
 We will say that such a sequence is \emph{linear summable}. 
 Now we define \[\dtsz: \rxyz \times \rxyz \to \R\] to be given
 by 
 \[
  \dtsz\left(s, s'\right) = \,\min\Big\{\abs{\si_{0,1} - \si'_{0,1}},2\pi - \abs{\si_{0,1} - \si'_{0,1}}, b_1\Big\} + \sum_{\mathclap{\substack{i,j\geq 0\\(i,j)\neq(0,1)}}}^\infty\, \min\Big\{\abs{\si_{i,j}-\si'_{i,j}},b_{i+j}\Big\}
 \]
 where $s = \ts$ and $s' = \tsp$.
\end{definition}
 This metric is designed to induce the topology in which a sequence of Taylor
 series converges if and only if each term converges.
 Also, notice that two series which agree up to a high order will
 be very close in the metric space and two series which agree only on the 
 high order terms will be distant, as one would expect.
 In Section~\ref{sec_metricts} we develop a similar metric on $\rxy$, which 
 could be of independent interest.
\begin{prop}\label{prop_rxyz}
 For any choice of linear summable sequence $\bn$ the space
 $(\rxyz, \dtsz)$ is a complete path-connected metric space 
 and a sequence of Taylor series converges if and only if the
 coefficient of $Y$ converges in $ \R/2\pi\Z$ and all other 
 terms converge in $\R$.  Thus, the topology of $(\rxyz, \dtsz)$
 does not depend on the choice of $\bn$
 as long as it is linear summable.
\end{prop}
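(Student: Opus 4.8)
The plan is to verify the metric axioms, then establish path-connectedness and completeness, and finally to read off the description of convergent sequences, from which independence of the topology on $\bn$ follows immediately. Throughout it is convenient to view a series in $\rxyz$ as the tuple of its coefficients $\si_{i,j}$, with $\si_{0,0}=0$ and $\si_{0,1}$ regarded as an element of $\R/2\pi\Z$ represented in $[0,2\pi)$.

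For the metric axioms the one elementary fact I would isolate is that a truncation of a metric is again a metric: if $\rho$ is a metric and $c>0$, then $\min\{\rho,c\}$ is a metric. Applying this to $\rho(a,b)=\abs{a-b}$ on $\R$ and to $\rho(a,b)=\min\{\abs{a-b},2\pi-\abs{a-b}\}$ --- the geodesic metric of $\R/2\pi\Z$ written on representatives in $[0,2\pi)$ --- shows that each summand of $\dtsz$ is a metric on the corresponding coordinate, and hence (summing the triangle inequalities) that $\dtsz$ satisfies the triangle inequality. A countable sum of metrics is finite-valued and a metric once the sum is finite, which holds here because each summand is at most $b_{i+j}$ and $\sum_{n\geq0}(n+1)b_n<\infty$ for a linear summable sequence; the ever-vanishing $(0,0)$ term plays no role since $\si_{0,0}=0$. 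The one point needing attention is positive-definiteness of the $(0,1)$ summand: if it vanishes then, since $\abs{\si_{0,1}-\si'_{0,1}}<2\pi$ for representatives in $[0,2\pi)$, the term $2\pi-\abs{\si_{0,1}-\si'_{0,1}}$ cannot be the one that vanishes, so $\si_{0,1}=\si'_{0,1}$.

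For path-connectedness I would join two series by interpolating each coefficient linearly for $(i,j)\neq(0,1)$ and moving the coefficient of $Y$ along a shortest arc in $\R/2\pi\Z$, at each time taking its representative in $[0,2\pi)$; the resulting family lies in $\rxyz$, and it is $\dtsz$-continuous because each summand of the distance between two parameter values is dominated by the summable $b_{i+j}$ and tends to $0$, so the whole sum does too. For completeness I would take a Cauchy sequence $(s^{(k)})_k$: as soon as $\dtsz(s^{(k)},s^{(l)})<b_{i+j}$ the truncation in the $(i,j)$-coordinate is inactive, so each coordinate sequence is Cauchy --- in $\R$ for $(i,j)\neq(0,1)$, and in the complete (indeed compact) space $\R/2\pi\Z$ for $(0,1)$ --- and therefore converges; since the limiting $(0,0)$-coefficient is $0$ and the limiting $Y$-coefficient has a representative in $[0,2\pi)$, the coefficientwise limit is a genuine element $s\in\rxyz$. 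That $s^{(k)}\to s$ then follows from a standard $\ep/2$-split: choose $N$ with $\sum_{n>N}(n+1)b_n<\ep/2$, which bounds the tail of $\dtsz(s^{(k)},s)$ uniformly in $k$, and use coordinatewise convergence on the finitely many terms with $i+j\leq N$ to make the head $<\ep/2$ for all large $k$.

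Running this last estimate in both directions yields exactly the stated criterion: $s^{(k)}\to s$ in $\dtsz$ if and only if the coefficient of $Y$ converges in $\R/2\pi\Z$ and every other coefficient converges in $\R$ --- a condition not involving $\bn$. Since a metric space is first countable, its topology is determined by which sequences converge, so $(\rxyz,\dtsz)$ carries the same topology for every linear summable $\bn$. I do not expect a serious obstacle here; the only things that genuinely need care are that the circle summand stays a metric after truncation (the triangle inequality) and that the discontinuity of the quotient map $\R/2\pi\Z\to[0,2\pi)$ causes no harm --- which it does not, precisely because $\dtsz$ only ever sees the circle distance in the $Y$-coordinate.
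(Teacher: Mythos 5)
Your proof is correct and takes essentially the same approach as the paper's, which simply notes that the argument for Proposition~\ref{prop_dts} carries over ``with very few changes''; you have in effect made those changes explicit, namely the care required for the $\R/2\pi\Z$-valued $Y$-coefficient (the truncated circle metric, the shortest-arc interpolation, and the positive-definiteness check when a representative approaches $2\pi$).
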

Proposition~\ref{prop_rxyz} follows from the proof of 
Proposition~\ref{prop_dts} in Section~\ref{sec_metricts}.

\subsection{Comparing the volume invariant}\label{sec_compareh}
 Since the volume invariant $h_j$ is a real number we simply use 
 the standard metric on $\R$.

\subsection{Comparing the affine invariant}\label{sec_comparelwp}

The topology of spaces of polygons have been studied by many authors. 
For example, in~\cite{HaKn1997, HaKn1998} the authors study polygons 
with a fixed number of edges up to translations and positive homotheties
in Euclidean space and in~\cite{KaMi1995} the authors study polygons 
in $\R^2$ with fixed side length up to orientation preserving isometries.
For this paper we will use a topology on polygons related to the 
Duistermaat-Heckman measure~\cite{DuHe1982} similar to what is done
in~\cite{PePRS2013}.  A natural way to define a metric on closed subsets 
of $\R^2$ is to use the volume of the 
symmetric difference.  Let $\symdiff$ denote 
the symmetric difference of sets.  That is, for $A,B\subset\R^2$ let 
\[
 A\symdiff B = (A \setminus B) \cup (B \setminus A).
\]
In order to define a metric on labeled Delzant semitoric polygons we 
would like to use the volume of the symmetric difference of the polygons
(as is done by Pelayo-Pires-Ratiu-Sabatini in~\cite{PePRS2013}) but there are two problems.  First, 
the polygons here are not required to be compact, so the symmetric 
difference may have infinite volume, and second there are many polygons
to choose from.  To solve the first problem we will use a measure on $\R^2$
which is not the Lebesgue measure.
A natural
choice would be a probability measure on $\R^2$ but the structure of 
$\dpolynok$ is such that vertical translation should not affect the measure.
This is because the elements of $\dpolynok$ are only unique up to specific 
vertical transformations.

\begin{definition}\label{def_adm}
 We say that a measure $\nu$ on $\R^2$ is \emph{admissible} if: 
 \begin{enumerate}
  \item it is in the same measure class as $\mu$, the Lebesgue measure on $\R^2$ (i.e. $\mu\ll\nu$ and $\nu\ll\mu$);
  \item \label{part_admmeastwo} its Radon-Nikodym derivative with respect to Lebesgue measure only depends on the $x$-coordinate, i.e. there exists a $g:\R\to\R$ such that $\nicefrac{\text{d}\nu}{\text{d}\mu}(x,y) = g(x)$ for all $(x,y)\in\R^2$;
  \item \label{part_admmeasthree} this function $g$ satisfies $g,xg\in\text{L}^1(\mu, \R)$ and on every compact interval $g$ is both bounded and bounded away from zero.
 \end{enumerate}
\end{definition}

\begin{example}
Define $\nu_0$ so that 
\begin{equation*}
\frac{\mathrm{d}\nu_0}{\mathrm{d}\mu} (x,y) = \left\{\begin{array}{ll} 1 &, \text{ if }\abs{x}<1\\ \frac{1}{x^3} &,\text{ else.}\end{array}\right.
\end{equation*}
Notice $x^{-2}\in L^1 (\mu,\R)$ 
and $g_0=\frac{\mathrm{d}\nu_0}{\mathrm{d}\mu} (x,0)$ is bounded and 
bounded away from zero on compact intervals.
Thus, the measure $\nu_0$ is an admissible measure on $\R^2$.
\end{example}
When only considering compact semitoric systems
one can use the Lebesgue measure on $\R^2$ instead to produce a metric 
which induces the same topology, see Remark~\ref{rmk_cpt}.

We say that a map $T:\R^2\to\R^2$ is a \emph{vertical transformation}
if it is of the form $T(x,y) = (x, y + f(x))$ where $f\colon\R\to\R$ is
a measurable function.
Part~\eqref{part_admmeastwo} of Definition~\ref{def_adm} implies that the 
measure is invariant under vertical transformations and part~\eqref{part_admmeasthree}
will force convex sets with finite height at 
every $x$-value to have finite measure. 

\begin{prop}\label{prop_admissible}
 Suppose that $\nu$ is an admissible measure on $\R^2$ and $\De\in\poly$. 
 Then $\De$ has everywhere finite height if and only if $\nu(\De)<\infty$.
\end{prop}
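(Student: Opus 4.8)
The plan is to slice $\Delta$ by vertical lines and reduce the statement to a one-dimensional integral estimate. By Definition \ref{def_adm} the measure has the form $\mathrm{d}\nu(x,y)=g(x)\,\mathrm{d}\mu(x,y)$, so Tonelli's theorem gives
\[
\nu(\Delta)\;=\;\int_{\R}g(x)\,h(x)\,dx,\qquad h(x):=\mathrm{length}\big(\Delta\cap\ell_x\big),
\]
with the conventions $h(x)=0$ when $\Delta\cap\ell_x=\emptyset$ and $h(x)=+\infty$ when $\Delta\cap\ell_x$ is unbounded. Write $I=\pi_1(\Delta)$; since $\Delta$ is convex, $I$ is an interval, the upper boundary of $\Delta$ is a concave function of $x$ on $I$ and the lower boundary is convex, so $h$ is \emph{concave} on $I$. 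I would first reduce to the case that $\Delta$ has nonempty interior: if $\Delta$ lies in a line then $\nu(\Delta)=0$, and $\Delta$ then has everywhere finite height except in the degenerate case of a vertical ray or vertical line, which I would either exclude by convention or dispatch as a trivial special case. So assume $I$ is nondegenerate.

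For the direction ``everywhere finite height $\Rightarrow\nu(\Delta)<\infty$'', the hypothesis says exactly that $h(x)<\infty$ for all $x\in I$. A nonnegative concave function is bounded above on every compact subinterval of $I$ (the standard chord estimate handles even the endpoints of $I$), and on any ray $[s,\infty)\subseteq I$ concavity plus $h\ge 0$ gives $h(x)\le\frac{x-s}{x'-s}h(x')$ for a fixed $x'\in(s,\infty)\cap I$, i.e. $h$ grows at most linearly; symmetrically as $x\to-\infty$. I would then split $\int_\R g\,h\,dx$ into the piece over $I\cap[-1,1]$ — finite because $g$ and $h$ are both bounded there — and the piece over $\{|x|\ge 1\}$, where I bound $h(x)\le A+B|x|$ and use $xg\in\mathrm{L}^1$ together with $\int_{\{|x|\ge1\}}g(x)\,dx\le\int_\R|x|\,g(x)\,dx<\infty$ to conclude finiteness.

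For the converse I argue the contrapositive: if $\Delta$ fails to have everywhere finite height, some fiber $\Delta\cap\ell_{x_0}$ is unbounded, hence, being a closed convex subset of a line, it contains a ray, say $\{x_0\}\times[c,\infty)$. Because $\Delta$ is full-dimensional there is $(x_1,y_1)\in\Delta$ with $x_1\ne x_0$; by convexity $\Delta$ contains the convex hull of this point with the ray, and inspecting the vertical slices of that hull shows $h(x)=+\infty$ for every $x$ strictly between $x_0$ and $x_1$. On a compact subinterval $[a,b]$ of $(x_0,x_1)$ one has $g\ge\delta>0$, so $g\,h\equiv+\infty$ on $[a,b]$ and $\nu(\Delta)\ge\int_a^b g(x)h(x)\,dx=+\infty$.

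The main obstacle I expect is the estimate at infinity in the first direction: one has to notice that convexity of $\Delta$ forces the height profile $h$ to be concave and hence of at most linear growth, and then match this exactly against the integrability hypothesis $xg\in\mathrm{L}^1$ (and extract from it that $g$ is integrable away from the origin). Minor care is also needed for the behavior of $h$ near the endpoints of a bounded $I$ and for the lower-dimensional degenerate polygons noted above, but these are handled by concavity and by a direct measure-zero observation, respectively.
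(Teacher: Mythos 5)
Your proposal is correct, and both directions are handled by a route that is recognizably different in its bookkeeping from the paper's, though the underlying estimates coincide. For the forward implication, the paper picks two non-vertical half-planes among those cutting out $\De$ and bounds $\nu(\De)$ by $\nu(B)$, where $B$ is the wedge (or strip) formed by their intersection; a short computation splits into the parallel and non-parallel cases and produces the same ``constant plus linear'' height profile that you extract abstractly from concavity of $h(x)=\mathrm{length}(\De\cap\ell_x)$. Your reduction via Tonelli and the structural observation that $h$ is concave is a bit more uniform (no case split) and makes transparent exactly how $xg\in L^1(\mu)$ together with local boundedness of $g$ is being used; the paper's version is more hands-on and avoids invoking the regularity of $h$. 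For the converse, the two arguments diverge more: the paper appeals to the set containing a full vertical strip $\{a_1<x<a_2\}$ (which is vertically translation-invariant, hence of infinite $\nu$-measure), whereas you extract a vertical ray in an unbounded fiber, take its convex hull with an interior point, and observe that $h\equiv+\infty$ on an interval where $g$ is bounded below. Your version is more careful, since a convex set with an unbounded fiber need not literally contain a two-sided vertical strip (e.g.\ an upward-pointing wedge); what one really needs, and what you supply, is that $h$ is identically $+\infty$ on a set of positive $\nu$-measure. The degenerate cases (vertical line or ray, which have zero measure but not everywhere finite height) are flagged and set aside by both you and the paper, which is appropriate since such sets are excluded from $\poly$.
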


Proposition~\ref{prop_admissible} is proven in Section~\ref{subsec_poly}.

\subsection{Comparing the twisting index}
Notice that the twisting index invariant is more than just a list of integers
defined up to addition by a common constant, it is also the assignment of each
list to the elements of the orbit of a weighted polygon.
Let $\perm$ denote the symmetric group on $\mf$ elements. For $p\in S^\mf$
let the action of $p$ on a vector $\vec{v} = (v_j)_{j=1}^\mf$ by permuting
the elements be denoted by $p(\vec{v}) = (v_{p(j)})_{j=1}^\mf$.

\begin{definition}\label{def_kequiv}
 Suppose $\vec{k},\vec{k}'\in\Z^\mf$ for some nonnegative integer $\mf$
 and let $p\in\perm$.
 Then we say $\vec{k}\sim_p \vec{k}'$ if there exists a constant $c\in\Z$ 
 such that $k_j = k_{p(j)}'+c$ for all 
 $j=1,\ldots,\mf$. We write $\vec{k}\sim\vec{k}'$ if there exists
 a permutation $p\in \perm$ such that $\vec{k}\sim_p \vec{k}'$. 
 We denote by $[\vec{k}]$ the equivalence class of 
 $\vec{k}$ in $\Z^\mf/\sim$.
\end{definition}

\begin{definition}\label{def_tic}
 Let $[\De_w] = [(\De, (\ell_{\la_j}), \ep_j, k_j)_{j=1}^\mf]$
 and $[\De_w'] = [(\De', (\ell_{\la_j'}), \ep_j', k_j')_{j=1}^{\mf'}]$
 be labeled Delzant semitoric polygons.  Then $[\De_w]$ and $[\De_w']$
 are in the same \emph{twisting index class} if $\mf = \mf'$ and $\vec{k}\sim_{\mathrm{Id}}\vec{k}'$.
 They are in the same \emph{generalized twisting index class} if $\mf = \mf'$ and $\vec{k}\sim\vec{k}'$.
 We say that $[(\De, (\ell_{\la_j}), \ep_j, k_j)_{j=1}^\mf]$ is \emph{in the twisting index class}
 $\vec{k}$ and \emph{in the generalized twisting index class $[\vec{k}]$}, using the lists of integers
 to (non-uniquely) label the twisting index classes.
\end{definition}

\begin{remark}
Notice that $[\De_w] = [(\De, (\ell_{\la_j}), \ep_j, k_j)_{j=1}^\mf]$
and $[\De_w'] = [(\De', (\ell_{\la_j'}), \ep_j', k_j')_{j=1}^{\mf'}]$
are in the same twisting index class if and only if $\mf = \mf'$ and the representatives
can be chosen so that $k_j = k_j'$ for all $j=1, \ldots, \mf$.
Also notice that it is possible for two systems to be in the same twisting index class
but have different twisting index invariants.
\end{remark}

\begin{definition}\label{def_permkkp}
 Fix any $\vec{k},\vec{k}'\in\Z^\mf$ such that $\vec{k}\sim\vec{k}'$.
 Let 
 \[
  \permkkp = \left\{p\in\perm \left| \begin{array}{l}\text{there exists }c\in\Z \text{ such that }\\k_j = k_{p(j)}'+c \text{ for all }j=1,\ldots,\mf \end{array}\right. \right\}.
 \]
 Notice that $\vec{k}\sim\vec{k}'$ is equivalent to 
 $\permkkp\neq\varnothing$.  The elements of $\permkkp$ will be called 
 \emph{appropriate permutations} for $\vec{k}$ and $\vec{k}'$.
\end{definition}

If $[\De_w] = [(\De, (\ell_{\la_j}), \ep_j, k_j)_{j=1}^\mf]$
 and $[\De_w'] = [(\De', (\ell_{\la_j'}), \ep_j', k_j')_{j=1}^{\mf'}]$
 are in the same generalized twisting index class then the representatives
 can be chosen so that $k_j = k_{p(j)}'$ for $j=1, \ldots, \mf$.  There are
 only finitely many elements of $[\De_w]$ with any given fixed $\vec{k}$, so
 after fixing these integers to compare two labeled Delzant semitoric polygons
 we can take the symmetric difference of each corresponding pair and sum
 them.  
 According to Remark~\ref{rmk_e}, we can cycle through all possible
 polygons of a fixed $\vec{k}$ by starting with one representative
 for which $\ep_j=+1$ for $j=1, \ldots, \mf$ as in the following definition.


\begin{definition}\label{def_distpolyp}
 Suppose that for $i=1, 2$ we have 
 $[\De_w]_i = [(\De^i, (\ell_{\la_j}^i, +1, k^i_j)_{j=1}^\mf)]\in\dpolynok$
 for some $\mf>0$ and with $\vec{k}\sim\vec{k}'$, so $\permkkp\neq\varnothing$.
 For $p\in\permkkp$ define 
 \[
  \distpolyp\big([\De_w]_1,[\De_w]_2\big) = \,\sum_{\mathclap{\vec{u}\in\{0,1\}^\mf}}\, \nu\big( t_{\vec{u}}(\De^1) \symdiff t_{p(\vec{u})}(T^{-c}(\De^2)) \big)
 \]
 where $c\in\Z$ is the unique integer such that $k_j - k_{p(j)}' = c$ 
 for all $j=1,\ldots,\mf$.  In the case that $[(\De)],[(\De')]\in\dpolynokz$
 define 
 \[
  \distpoly([(\De)],[(\De')])= \nu( \De \symdiff \De' ).
 \]
\end{definition}

If $\mf=0$ the labeled weighted polygon becomes a single polygon.
The definition of $\distpolyp$ in this case
should be thought of as the same formula as the $\mf>0$ case and it is only
treated separately because the sum in the more general formula would be 
empty if $\mf=0$.

Notice that $\distpolyp$ is not a metric if $p\neq p^{-1}$ in $\perm$ 
because it is not symmetric. 
We will remove the dependence on a choice of permutation in the next section 
when we define the final version of the metric.  There are many 
ways to choose a representative from each equivalence class which have matching 
twisting indices,
but the volume of the symmetric difference will not actually depend on that choice
(see Proposition \ref{prop_tkvolume}) so this function is well-defined on orbits 
of $\Gmf\times\tk$.

\subsection{Definition of metric and main result}\label{sec_gencase}

For the metric we present in this paper we automatically force
systems which are in different generalized twisting index classes
(see Definition~\ref{def_tic}) are in different components
by declaring the distance between them to be infinite.
In particular, this implies that systems with a different number
of focus-focus singular points are in different components of $\semitoric$.
In fact, we will see that system which are in the same generalized twisting
index class but not in the same twisting index class are in different components,
but the distance between such systems is not defined to be infinite
(see Remark \ref{rmk_components}).

\begin{definition}\label{def_semitoricmk}
 Suppose that $\mf\in\Z_{>0}$ and $\vec{k}\in\Z^\mf$.  Then we define 
 $\semitoricmk\subset\semitoricm$ to be those elements with some representative 
 of their Delzant semitoric polygon invariant having integers labeling $\vec{k}$ and define 
 \[
  \semitoricmkperm = \bigcup_{\vec{k}'\in[\vec{k}]} \semitoricmkprime.
 \]
 This means that $\semitoricmk$ is a twisting index class and $\semitoricmkperm$
 is a generalized twisting index class (as in Definition~\ref{def_tic}).
 Furthermore, define 
 \[
  \dpoly = \{\lwpkp\in\dpolynok \mid \vec{k}\sim\vec{k}'\}
 \]
 and 
 \[
  \mmk = \mm \cap \left( \dpoly\times\ \R^\mf\times\rxyz^\mf \right).
 \]
\end{definition}

Notice that 
\[
 \semitoric = \bigcup_{\substack{\mf\in\Z_{\geq0}\\ \vec{k}\in\Z^\mf}} \semitoricmk.
\]
This union, and the union in Definition \ref{def_semitoricmk}, are not 
disjoint unions only because they have repeated terms.  For instance, 
since the action of $\tk$ can shift all of the twisting indices, we 
have that 
\[
 \semitoricmk = \semitoricmkc
\]
for any $c\in\Z$.

From Sections \ref{sec_comparets}, \ref{sec_compareh}, and \ref{sec_comparelwp},
given some fixed appropriate permutation we already know how to define 
a ``distance'' function  on two systems with specified twisting index class. 
To produce a metric which does not depend on fixing a permutation we will
take the minimum of each possibility.

\begin{definition}\label{def_metric}
 Let $\mf\in\Z_{\geq0}$ and $\vec{k}\in\Z^\mf$ and suppose that 
 $\mele,\melep\in\mmk$ with 
 \[
  \mele=\stfull\textrm{ and }\melep=\stfullp.
  \]
 Let $\nu$ be an admissible measure, $\bn$ be a linear summable 
 sequence, and $p\in\permkkp$. We define:
 \begin{enumerate}
  \item\label{item_metricperm} the \emph{comparison with alignment} $p$ to be 
        \[
         \Dp (\mele,\melep)= \distpolyp ([\dew],[\dewp]) + \sum_{j=1}^\mf \Big( \dtsz((S_j)^\infty, (S_{p(j)}')^\infty) + \big|h_j - h_{p(j)}'\big|\Big);
        \]
  \item the \emph{distance between $\mele$ and $\melep$} to be 
        \[
         \D (\mele, \melep) = \min_{p\in \permkkp} \left\{\Dp(\mele,\melep) \right\}. 
        \]
 \end{enumerate}
\end{definition}
  
 A minimum of even a finite number of metrics is not a metric in general,
 but we will see in Theorem~\ref{thm_main} that $\D$ is a metric in this case.  
 Now we use this distance defined on each $\mmk$ to induce a distance
 on the whole space which can be pulled back to produce a metric on $\semitoric$.

\begin{definition}\label{def_fullmetric}
 Let $\nu$ be an admissible measure and $\bn$ be a linear 
 summable sequence.  Then we define
 \begin{enumerate}
  \item the \emph{distance on $\m$} by 
        \[
         \Dnomk(\mele,\melep)=\left\{\begin{array}{ll}\D(\mele,\melep)&,\text{ if }\mele,\melep\in\mmk\text{ for some }\mf\in\Z,\vec{k}\in\Z^\mf\\\infty&,\text{ otherwise}\end{array}\right. 
        \]
        for $\mele,\melep\in\m$;
  \item the \emph{distance on} $\semitoric$ by $\Dst = \Phi^* \Dnomk$ 
        where $\Phi: \semitoric \to \m$ is the bijective correspondence 
        from Theorem \ref{thm_class}.
 \end{enumerate}
\end{definition}

Notice that the distance between two systems is finite if and only if
they are in the same generalized twisting index class (Definition~\ref{def_tic}).
To state the main theorem we will have to first define the completion.

\begin{definition}\label{def_mcomplmk}
 For any choice of $\mf\in\Z_{\geq0}$ and $\vec{k}\in\Z^\mf$ we define 
 \[
  \mcomplmk = \polycompl\times[0,1]^\mf\times\rxyz^\mf
 \]
 and 
 \[
  \mcompl = \bigcup_{\substack{\mf\in\Z_{\geq0}\\\vec{k}\in\Z^\mf}} \mcomplmk
 \]
 where the critical points satisfy the ordering convention from 
 Remark~\ref{rmk_order} and $\polycompl$ is defined as in Definition~\ref{def_polycompl}.
\end{definition}

 \begin{lettertheorem}\label{thm_main}
  For any choice of
  \begin{enumerate}[font=\normalfont]
   \item a linear summable sequence $\bn$;
   \item an admissible measure $\nu$;
  \end{enumerate}
the space $\metricspace$ is a non-complete metric space whose completion 
corresponds to $\mcompl$.  Moreover, the topology of $\metricspace$ is 
independent of the choice of $\nu$ and $\bn$.
 \end{lettertheorem}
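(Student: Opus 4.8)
The plan is to decompose the statement into the three assertions it really contains: that $\Dst$ is a metric, that the metric space $\metricspace$ is not complete with completion $\mcompl$, and that the topology is independent of $\nu$ and $\bn$. As the introduction indicates, the first assertion is essentially Proposition \ref{prop_metric} and the second is Proposition \ref{prop_complete}, so the proof of Theorem \ref{thm_main} should be a short synthesis once those are in hand; I would structure it that way and spend the real work in the two supporting sections. For the metric property, since $\Phi$ is a bijection it suffices to check $\Dnomk$ is a metric on $\m$, and since incomparable lists of ingredients are at distance $1$ from each other while each $\mmk$ is at distance at most (say, after rescaling via the $(0,1)^\mf$ reparametrization) a bounded quantity, the only real content is that $\D$ is a metric on each $\mmk$. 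Here the issue flagged after Definition \ref{def_metric} is that a minimum over the finite set $\permkkp$ of the functions $\Dp$ — each of which fails symmetry because $\distpolyp$ is not symmetric — is being taken; I would verify symmetry using that $p\in\permkkp \iff p^{-1}\in\permkkpp$ together with Proposition \ref{prop_tkvolume} to see $\distpolyp([\dew],[\dewp]) = d_{\mathrm P}^{p^{-1},\nu}([\dewp],[\dew])$, and verify the triangle inequality by composing appropriate permutations (if $p$ is appropriate for $\vec k,\vec k'$ and $q$ for $\vec k',\vec k''$ then $q\circ p$ is appropriate for $\vec k,\vec k''$), using the triangle inequality for $\distpoly$ (which follows from that of the symmetric-difference pseudometric), $\dtsz$ (Proposition \ref{prop_rxyz}), and $\dh$ termwise, and then taking minima. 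Positivity/definiteness reduces to the classification Theorem \ref{thm_class} together with the fact that the symmetric difference of two distinct polygons has positive Lebesgue (hence positive $\nu$, by the measure-class condition in Definition \ref{def_adm}) measure.

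For the completion statement I would first identify, for fixed $\mf$ and $\vec k$, what a Cauchy sequence in $\mmk$ looks like under the product description $\mmk \cong \dpoly\times(0,1)^\mf\times\rxyz^\mf$: by Proposition \ref{prop_rxyz} the Taylor-series factors live in a complete space, so those converge in $\rxyz^\mf$; the $\widetilde{h_j}\in(0,1)$ factors converge in $[0,1]$; and the polygon factors form a Cauchy sequence in the symmetric-difference metric, whose limit (taken in $\polycompl$, the completion of the polygon space constructed in Definition \ref{def_polycompl}) may fail to be an honest labeled Delzant semitoric polygon — this is exactly where incompleteness enters, and I would exhibit a concrete escaping sequence (e.g. a family of polygons degenerating so that two cut lines $\ell_{\la_j}$ collide, or the height $h_j$ runs to an endpoint of its interval, or the polygon develops a corner that is no longer Delzant in the limit). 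Then I would show conversely every point of $\mcompl$ is a limit of a sequence from $\semitoric$, and that distinct points of $\mcompl$ stay at positive distance, so that $\mcompl$ with the extended $\D$ is genuinely the completion; here the main technical input is the continuity/extension of $\distpolyp$ to $\polycompl$, which should be read off from the construction in Section \ref{sec_compl}. I expect the delicate points to be (i) checking that $\D$, built from a minimum over permutations, passes to a metric on the completion without new coincidences being created, and (ii) handling the interaction across the different $[\vec k]$-strata — but since the distance between different strata is the constant $1$, the completion decomposes as a disjoint union of the completions of the individual $\mmk$, which reduces (ii) to the fixed-$(\mf,\vec k)$ analysis.

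For topology-independence, the strategy is the standard one: show that for any two admissible measures $\nu_1,\nu_2$ and any two linear summable sequences $\{b_n\},\{b_n'\}$, the identity map between the corresponding metric spaces is a homeomorphism, by characterizing convergence of sequences intrinsically. For the Taylor-series factor this is already done — Proposition \ref{prop_rxyz} says convergence means coefficientwise convergence (with the $Y$-coefficient in $\R/2\pi\Z$) regardless of $\bn$. For the polygon factor I would argue that $\nu_1(\De_n\symdiff\De)\to 0$ iff $\nu_2(\De_n\symdiff\De)\to0$: since both Radon–Nikodym derivatives $g_i$ depend only on $x$ and are bounded and bounded away from zero on compact intervals (Definition \ref{def_adm}(3)), on any fixed vertical strip the two measures are comparable, and the $xg\in L^1$ condition controls the tails uniformly, so a cutoff argument gives the equivalence; combined with $\dh$ being literally the Euclidean metric on the bounded $h$-coordinates and with taking the (finite) minimum over $\permkkp$, which preserves equivalence of metrics, the total distances $\D$ for different choices are topologically equivalent, hence so are the pulled-back distances $\Dst$ on $\semitoric$. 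The hardest step overall is the completion: pinning down $\polycompl$ precisely enough that $\distpolyp$ extends continuously and no two distinct limit objects collapse, i.e. getting Definition \ref{def_polycompl} and Proposition \ref{prop_complete} exactly right; everything else is bookkeeping on top of the classification theorem and the already-established Proposition \ref{prop_rxyz}.
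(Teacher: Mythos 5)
Your overall decomposition — metric property, non-completeness and identification of the completion, topology independence — is exactly the paper's, which assembles Theorem~\ref{thm_main} in one line from Proposition~\ref{prop_metric}, Corollary~\ref{cor_doesnotdepend}, and Proposition~\ref{prop_complete}. Your sketches of symmetry and the triangle inequality via composing appropriate permutations match Lemmas~\ref{lem_permkkpp} and~\ref{lem_dpdqdpq}, and your proposed characterization of convergence in each factor is the one the paper uses. So in outline you are on the right track, but two of your steps skip content that the paper proves as genuine lemmas, and as stated they have gaps.

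First, for the completion you propose to read off Cauchy sequences in $\mmk$ factor-by-factor under $\dpoly\times(0,1)^\mf\times\rxyz^\mf$. This does not follow directly: because $\D$ is a minimum over $\permkkp$ of the alignment comparisons $\Dp$, the permutation achieving the minimum can change from pair to pair along the sequence, so a $\D$-Cauchy sequence need not project to a Cauchy sequence in any single factor. The paper flags this explicitly (``defining the distance as a minimum of permutations has intertwined the metrics on these different spaces so we can not consider them separately'') and handles it with Lemma~\ref{lem_cauchysub}: a nested pigeonhole over the finite set $\perm$ extracts a subsequence along which one fixed permutation works, making the subsequence Cauchy for $\Did$; only then does the coordinatewise analysis (Lemma~\ref{lem_cauchyidconvg}) apply, and the step recovering the cut parameters $\la_j$ from the limiting family of convex sets is itself nontrivial. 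You anticipate a ``delicate point'' about the minimum over permutations, but locate it at whether $\D$ remains a metric on $\mcompl$ rather than at the identification of Cauchy sequences, which is where the real work sits.

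Second, for topology independence of the polygon factor, the step ``$xg\in L^1$ controls the tails uniformly, so a cutoff argument gives the equivalence'' is not self-contained. $\nu_1(\De_k\symdiff\De)\to0$ does not by itself bound $\nu_2$-mass of $\De_k$ far from a central strip, since the two measures may decay at very different rates; you first need a uniform confinement statement for the $\De_k$. The paper supplies this with Lemma~\ref{lem_convexinterval} (a fixed vertical segment $A_{x_0,y_0,y_1}$ lies in $\De_k\cap\De$ for large $k$) followed by a convexity argument: if some $\De_k$ reached into the ``escape region'' $E_N$, the convex hull of the escape point with that segment would force a fixed-$\nu_1$-measure set into $\De_k\symdiff\De$, contradicting convergence. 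That geometric input is what justifies restricting to a compact strip where $\mathrm{d}\nu_2/\mathrm{d}\nu_1$ is bounded; without it the cutoff argument is incomplete. With those two lemmas filled in, the rest of your synthesis — Proposition~\ref{prop_rxyz} for Taylor series, $|{\cdot}|$ for the $h_j$, and the finite minimum over $\permkkp$ preserving topological equivalence — is as in the paper.
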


\begin{remark}
  There are several important facts to notice about Theorem~\ref{thm_main}:
 \begin{enumerate}

  \item The distance $\Dst$ depends on the choice of $\bn$ and $\nu$, but the induced topology
        does not so the family of distances introduced in Definition~\ref{def_fullmetric} 
        induces a unique topology on $\semitoric$ and thus 
        Theorem~\ref{thm_main} completely resolves Problem 2.43 
        asked by Pelayo-V\~{u} Ng\d{o}c in~\cite{PeVNfirst2012}.
        
  \item If $\mele,\melep\in\mmk\text{ for some }\mf\in\Z,\vec{k}\in\Z^\mf$ then $\Dnomk(\mele,\melep)<\infty$. 
  
  \item In special cases a less complicated form of the metric can be used.
        Let $\mathrm{Id}\in\mathcal{S}^{\mf}$ denote the identity permutation
        so $\Did$ is the comparison with alignment $p$
        from Definition~\ref{def_metric} part~\eqref{item_metricperm} in the case
        that $p=\mathrm{Id}$.
        The metric 
        \[
         \mathcal{D}^{\mathrm{Id}, \nu, \bn}:=\Phi^*\Did
        \] 
        is easier to work with and induces the same
        topology as $\Dst$ (Proposition~\ref{prop_sametop}) so this should
        be used to study topological properties of $\semitoric$.
        Additionally, when studying compact semitoric systems the admissible 
        measure on $\R^2$ can be instead replaced by the standard Lebesgue 
        measure without changing the topology (Remark~\ref{rmk_cpt}).
        See Example~\ref{ex_needmin} for an explanation of why $\Dst$ 
        produces the appropriate metric space structure on $\semitoric$.

  \item Since toric integrable systems fall into the broader category of 
        semitoric systems it is natural to wonder if the metric defined 
        in this paper is compatible with the metric on toric systems 
        by Pelayo-Pires-Ratiu-Sabatini in~\cite{PePRS2013}.  Because we must choose an admissible measure 
        to apply to the more general cases the metric induced by $\Dplain$ 
        does not exactly match the metric defined on toric systems but 
        they do induce the same topology, see Section \ref{sec_toric}.
        
  \item Since all metric spaces are Tychonoff (completely regular and 
        Hausdorff) we know that $\semitoric$ is Tychonoff.  Thus the 
        Stone-C\v{e}ch compactification~\cite{Ce1937,St1937} applies 
        to $\semitoric$ so it admits a Hausdorff compactification 
        (just as in~\cite{PePRS2013}).


 \end{enumerate}
\end{remark}

\section{The metric}\label{sec_themetric}
 In this section we fill in the details of constructing the metric and prove that it is a metric. 
\subsection{Metrics on Taylor series}\label{sec_metricts}
Let $\rxy$ refer to the algebra of real formal power series in two variables, $X$ and $Y$.
\begin{definition}
 Suppose that $\bn$ is any linear summable sequence.  Then we define the 
 \emph{distance on Taylor series} to be the function 
 \[
  \dts : \rxy \times \rxy \to \R 
 \] 
 given by 
 \[
  \dts\left(\ts, \tsp\right) = \sum_{i,j=0}^\infty \min\left\{\abs{\si_{i,j}-\si'_{i,j}},b_{i+j}\right\}.
 \]
\end{definition}
\begin{prop}\label{prop_dts}
 The space $(\rxy, \dts)$ is a complete path-connected metric space and 
 a sequence of Taylor series $\left( s_k = \sum_{i,j\geq0}\sigma_{i,j}^k X^i Y^j\right)_k$
 converges if and only if each sequence of terms $(\sigma_{i,j}^k)_k$ converges.
\end{prop}
\begin{proof}
 First notice that the sum in the definition of the distance always converges.  
 This is because 
 \[
  \dts\left(\ts, \tsp\right) \leq \sum_{i,j=0}^\infty b_{i+j} = \sum_{n=0}^\infty (n+1)b_n< \infty
 \]
 for any pair of Taylor series by the choice of $\bn$.
 It is also clear that $\dts$ is symmetric and positive definite.
 It satisfies the triangle inequality because that inequality is satisfied 
 for each term and thus we can see that $(\rxy, \dts)$ is a metric space.
 
 Next we will prove the condition on convergence.  
 Suppose that 
 \[
  \lim_{k\to\infty} \dts \left( s_k, s_0 \right) = 0
 \]
 with $s_k, s_0 \in \rxy$ as in the statement of the Proposition.
 Fix any $I,J\in\Z_{\geq0}$
 and we will show that $\si_{I,J}^k \stackrel{k\to\infty}{\longrightarrow} \si_{I,J}^0$.
 Fix $\varepsilon>0$ and find $K$ such that $k>K$ implies that 
 \[
  \sum_{i,j=0}^\infty \min\left\{\abs{\si_{i,j}^k-\si_{i,j}^0},b_{i+j}\right\}<\varepsilon
 \]
 because we may assume that $\varepsilon<b_{I+J}$.
 Then we can see that $\abs{\si_{I,J}^k - \si_{I,J}^0}<\varepsilon$ so the result follows.
 
 Now we will show the converse.  Suppose that 
 \[
  \lim_{k\to\infty} \abs{\si_{i,j}^k - \si_{i,j}^0}=0
 \]
 for all $i,j\in\Z_{\geq0}$. Fix $\varepsilon>0$, let $N\in\N$ be such that 
 \[
  \sum_{n\geq N} (n+1)b_n < \frac{\varepsilon}{2},
 \]
 and let $K\in\Z$ be such that $k>K$ implies that 
 \[
  \abs{\si_{i,j}^k - \si_{i,j}^0}<\frac{\varepsilon}{N(N+1)}
 \]
 for each $i,j\in\Z_{\geq0}$ such that $i+j<N$.  Notice it is possible to 
 do this simultaneously because there are only finitely many such pairs $(i,j)$.
 For any $k>K$ we have that
 \begin{align*}
  \dts \left( s_k, s_0\right) & \leq \sum_{i+j<N}\abs{\si_{i,j}^k - \si_{i,j}^0} + \sum_{i+j\geq N} b_{i+j}\\
                                  & < \frac{\varepsilon}{N(N+1)} \sum_{i+k<N} 1 + \sum_{n\geq N} (n+1)b_n\\
                                  & < \frac{\varepsilon}{N(N+1)} \frac{N(N+1)}{2} + \frac{\varepsilon}{2} = \varepsilon.
 \end{align*}
 This proves the convergence condition.
 
 Any element of this space may be continuously transformed into 
 any other linearly in each term, so it is path-connected. To finish
 the proof we will show that this space is complete.  Suppose that 
 $\left( s_k \right)_{k=0}^\infty$ is a Cauchy sequence in $\rxy$.
 Using an argument similar to the one for convergence, we can see that
 the sequence $\{\si_{i,j}^k\}_{k=0}^\infty$ is Cauchy for each
 $i,j\in\Z_{\geq0}$ and therefore $\si_{i,j}^k \stackrel{k\to\infty}{\longrightarrow} \si_{i,j}^0$
 for some $\si_{i,j}^0\in\R$. Since it converges
 in each term, we can use the convergence condition to conclude that 
 \[
  \lim_{k\to\infty} \dts \left( s_k, \tsz \right) = 0
 \]
 and so all Cauchy sequences have limits.
\end{proof}

We have characterized convergence in this space in a way which is independent of the sequence $\bn$.
Since the topology of a metrizable space is completely determined by its convergent sequences 
we have the following result.

\begin{cor}
 The topology on $\rxy$ determined by $\dts$ does not depend on the choice of the linear summable sequence $\bn$.
\end{cor}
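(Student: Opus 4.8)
The plan is to reduce the statement to Proposition~\ref{prop_dts} via the general principle that a metrizable space is a sequential space, so its topology is completely determined by the data of which sequences converge and to which limits. Concretely, let $\bn$ and $\bnp$ be two linear summable sequences, giving metrics $\dts$ and $d^{\bnp}_{\rxy}$ on $\rxy$. Proposition~\ref{prop_dts}, applied separately to each of these sequences, shows that for a sequence $\tsk \in \rxy$ and a candidate limit $\tsz\in\rxy$ one has $\dts(\tsk,\tsz)\to 0$ if and only if $\si_{i,j}^k\to\si_{i,j}^0$ in $\R$ for every $i,j\in\Z_{\geq0}$, and likewise $d^{\bnp}_{\rxy}(\tsk,\tsz)\to 0$ if and only if the same coefficientwise convergence holds. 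In particular the two metrics have exactly the same convergent sequences with exactly the same limits.

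First I would record the standard point-set fact that in a first-countable (in particular, metrizable) topological space a subset is closed if and only if it is sequentially closed, so that two metrics on a common underlying set induce the same topology as soon as they have the same convergent sequences with the same limits. Applying this with the two metrics $\dts$ and $d^{\bnp}_{\rxy}$ together with the coefficientwise characterization above, a subset of $\rxy$ is $\dts$-closed if and only if it is sequentially closed (with respect to the common notion of convergence) if and only if it is $d^{\bnp}_{\rxy}$-closed. Hence the two metrics have identical families of closed sets, so they define the same topology, which proves the corollary.

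There is essentially no obstacle here: the entire content has already been extracted in Proposition~\ref{prop_dts}, whose convergence criterion was deliberately phrased without reference to $\bn$. If one preferred to avoid invoking sequentiality, an equivalent route would be to check directly that the identity map $(\rxy,\dts)\to(\rxy,d^{\bnp}_{\rxy})$ and its inverse are both continuous; since $\rxy$ with either metric is first-countable one can test this continuity on sequences, and it again comes down exactly to the coefficientwise convergence statement of Proposition~\ref{prop_dts}. Either way the proof is a short deduction, so I would keep it to a few lines.
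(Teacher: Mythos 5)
Your proof is correct and follows essentially the same route as the paper, which deduces the corollary immediately from the $\bn$-independent convergence criterion in Proposition~\ref{prop_dts}; you simply spell out the standard point-set fact (metric spaces are sequential, so sequences with their limits determine the topology) that the paper leaves implicit.
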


Notice that $\rxyz$ is not a closed subset of $(\rxy, \dts)$ and $(\rxyz, \dts)$
with the restricted metric is not a complete metric space.  To see this
consider any collection of Taylor series in which $\si_2 \to 2\pi$.  
This does not accurately describe the structure of the semitoric 
systems ($\si_2\in[0,2\pi)$ represents a point on the circle, see the construction of the Taylor series invariant~\cite{VN2003}) and 
thus we use the altered metric from Definition~\ref{def_metricts}.
Proposition~\ref{prop_rxyz} follows from a slightly altered version of 
the proof of Proposition~\ref{prop_dts}.

\begin{remark}\label{rmk_tsmorevar}
 A similar construction to $\dts$ can be used to produce such a metric on Taylor 
 series in any number of variables.  The only difference is that to produce a 
 metric on Taylor series in $m$ variables the sequence $\bn$ would be 
 required to satisfy 
 \[
  \sum_{n=0}^\infty\binom{n+m-1}{n}b_n<\infty
 \]
 because there are $\binom{n+m-1}{n}$ terms of degree $n$ in a Taylor series 
 on $m$ variables.
\end{remark}

\subsection{Metrics on labeled weighted polygons}\label{subsec_poly}

We start this section with a proof.
\begin{proof}[Proof of Proposition \ref{prop_admissible}]
 First suppose that $\De\in\poly$ has everywhere finite height and we
 will show that $\nu(\De)$ is finite.
 By definition $\De$ is the intersection of half-spaces and since it is 
 assumed to have everywhere finite height we can see that this collection 
 of half spaces must include at least two which are not completely vertical,
 i.e.~not of the form $\{x\geq c\}$ or $\{x\leq c\}$ for $c\in\R$.  
 Let $B$ denote the intersection of these two half planes.  Then by definition 
 $\De\subset B$ and thus $\nu(\De)<\nu(B)$.  If the two half planes are parallel
 of a distance $c$ apart then 
 \[
  \nu(B) = \varint_{B}\dnudmu\dmu = \varint_{\R} cg \dmu<\infty
 \] 
 because $g\in L^1(\mu,\R)$.  If the spaces are 
 not parallel then their boundaries intersect at some point $(x_0,y_0)$.  
 Let $m$ be the absolute value of the difference in the slopes of the two boundaries.
 Then for each value $(x,y)\in\R^2$ the height of $B$ at that $x$-coordinate is 
 $m\abs{x-x_0}$ and the sign of $x-x_0$ is the same for each $(x,y)\in B$.  
 Assume that $x-x_0 \geq 0$ for all $(x,y)\in B$ so we have 
 \[
  \nu(B) = \varint_B \dnudmu\dmu = \varint_{x_0}^\infty m(x-x_0)g(x)\dmu = m \varint_{x_0}^\infty xg\dmu - m x_0 \varint_{x_0}^\infty g\dmu <\infty
 \]
 because $g\in L^1(\mu, \R)$ and $xg\in L^1 (\mu,\R)$.  The computation is similar if
 $x-x_0\leq 0$ for all $(x,y)\in B$.

Next we show any compact polygon $\De\in\poly$ without everywhere finite height will have infinite $\nu$-measure.  
This is because a compact polygon which does not have everywhere finite height 
includes a subset of 
the form $\{(x,y)\mid a_1<x<a_2, \varepsilon y>b\}$ for some $a_1<a_2$, $b\in\R$, $\varepsilon\in\{\pm 1\}$
(otherwise it is a vertical line, which is not a polygon).  Such a subset has 
infinite $\nu$-measure because $\nu$ is invariant under vertical translations.
\end{proof}

Even once we have fixed the cut directions there are many polygons to choose 
from based on the choice of the twisting index (i.e.~the orbit of the action 
of $\tk$) but if the same choice is made for each pair of polygons 
this choice does not change the volume of the symmetric difference.

\begin{prop}\label{prop_tkvolume}
 Let $\mf\in\Z_{\geq0}$, $p\in\perm$, and consider
 \[
  \mathcal{J}^p = \left\{ ([\lwp],[\lwpp])\in (\dpoly)^2 \middle| p\in\permkkp\right\}.
 \]
 Then the function $\distpolyp:\mathcal{J}^p\to\R$ is well defined.
\end{prop}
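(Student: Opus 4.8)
The plan is to show that the right-hand side of the formula in Definition \ref{def_distpolyp} depends only on the two $\Gmf\times\tk$-orbits $[\lwpplus]$ and $[\lwppplusprime]$ and on the permutation $p$, not on the chosen representatives with all cut labels equal to $+1$. Two such representatives of a fixed orbit in $\dpoly$ differ exactly by the action of an element $T^\beta\in\tk$ (the $\Gmf$-part of the action would change some $\ep_j$), so it is enough to see that the sum is unaffected when the representative of the first orbit is replaced by its $T^\beta$-translate and, separately, the representative of the second by its $T^\gamma$-translate. It is also worth recording at the outset that the sum is finite: by Proposition \ref{prop_convex} each $t_{\vec u}(\De)$ with $\vec u\in\{0,1\}^\mf$ is convex, and since $t_{\vec u}$ carries each vertical line bijectively to itself, taking bounded segments to bounded segments, $t_{\vec u}(\De)$ still has everywhere finite height and hence finite $\nu$-measure by Proposition \ref{prop_admissible}; the sum then has $2^\mf$ terms, each the $\nu$-volume of a symmetric difference of sets of finite measure.

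The argument rests on two facts about how $\nu$ interacts with the shears $T^\beta$. First, $\nu$ is invariant under every $T^\beta$: this map fixes the $x$-coordinate and has determinant $1$, so since $\dnudmu(x,y)=g(x)$ depends on $x$ alone, the change-of-variables formula gives $\nu(T^\beta E)=\int_E g(x)\dmu=\nu(E)$ for every measurable $E$, whence $\nu\big(T^\beta A\symdiff T^\beta B\big)=\nu(A\symdiff B)$; the identical computation shows $\nu$ is invariant under each $t_{\vec u}$ too. Second, $T^\beta$ commutes with each $t_{\vec u}$: since $T^\beta$ fixes every vertical line $\ell_{\la_j}$ setwise and preserves the two half-planes it bounds, on each region cut out by the $\ell_{\la_j}$ both $t_{\vec u}$ and $T^\beta$ act by powers of $T$, which commute, and the only subtlety --- that $T^\beta$ displaces the chosen origin on $\ell_{\la_j}$ --- is harmless because that displacement is vertical while $T^m-\mathrm{Id}$ annihilates vertical vectors.

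Granting these, the verification is short. Replacing the first representative by its $T^\beta$-translate turns $\De$ into $T^\beta\De$ and each $k_j$ into $k_j+\beta$, so the integer $c$ in the formula is recomputed; tracking the sign, one checks that the net effect on each summand is to apply the single global shear $T^\beta$ to both $t_{\vec u}(\De)$ and $t_{p(\vec u)}(T^{-c}\De')$ --- the commutation identity of the previous paragraph is exactly what lets the two separately-shifted pieces be pulled under one $T^\beta$ --- and so the $\nu$-volume of each symmetric difference is unchanged. Replacing the second representative by its $T^\gamma$-translate turns $\De'$ into $T^\gamma\De'$ and changes $c$ in the opposite direction, so that $T^{-c}\De'$, and hence every summand, is literally unchanged. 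There is nothing to do in the $\Gmf$-direction once the $\ep_j=+1$ representatives are fixed; on the extension of $\distpolyp$ to all of $\lwpoly$ mentioned after Definition \ref{def_distpolyp}, $\Gmf$-invariance follows from Remark \ref{rmk_e}, as changing $\vec\ep$ merely re-indexes the family $(t_{\vec u}(\De))_{\vec u\in\{0,1\}^\mf}$ over which the sum runs. This shows $\distpolyp$ descends to a well-defined $\R$-valued function on $\mathcal{J}^p$. The point needing the most care is the sign with which $c$ transforms under a $\tk$-shift, so that the induced shift of $\De'$ points the same way as the shift of $\De$ and is absorbed by the $T^\beta$-invariance of $\nu$; the rest is routine bookkeeping.
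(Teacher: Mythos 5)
Your proof takes the paper's approach exactly: the ambiguity of a representative with all $\ep_j=+1$ in a fixed orbit is a single shear $T^\beta\in\tk$, and the argument rests on commutation of $T^\beta$ with each $t_{\vec u}$ together with $\nu$-invariance under vertical transformations, tracking how $c$ shifts. The gap is in the sign check that you flag as the delicate point and then wave through with ``one checks.'' Taking Equation \eqref{eqn_ggaction} at face value, $T^\beta$ sends $\De\mapsto T^\beta\De$ and $k_j\mapsto k_j+\beta$, hence $c\mapsto c+\beta$; commuting shears through $t_{\vec u}$ and $t_{p(\vec u)}$ then turns each summand into
\[
\nu\big(T^{\beta}\,t_{\vec u}(\De)\ \symdiff\ T^{-\beta}\,t_{p(\vec u)}(T^{-c}\De')\big),
\]
in which the two sets are sheared by $T^{\beta}$ and $T^{-\beta}$ respectively, so there is no single global shear to absorb by $\nu$-invariance. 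The second input fails the same way: $c\mapsto c-\gamma$ and $\De'\mapsto T^\gamma\De'$ give $T^{-(c-\gamma)}(T^\gamma\De')=T^{-c+2\gamma}\De'$, not the claimed $T^{-c}\De'$.

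What this exposes is a sign mismatch between the $T^{-c}$ in Definition \ref{def_distpolyp} and the action in Equation \eqref{eqn_ggaction}: to align the twisting indices of $\dewp$ with those of $\dew$ after permuting by $p$, one should act by $T^{+c}$, since under Equation \eqref{eqn_ggaction} this sends $k'_{p(j)}$ to $c+k'_{p(j)}=k_j$. With $T^{c}$ in place of $T^{-c}$ both of your claims are correct verbatim: a common $T^\beta$ is absorbed by $\nu$-invariance on the first input, and each summand is literally unchanged on the second. For comparison, the paper's own proof asserts the orbit relation $k^1_j=k^2_j-d$ with $\De^1=T^d\De^2$, which does not follow from Equation \eqref{eqn_ggaction} as written but is exactly the compensating sign that makes $T^{-c}$ work --- so the tension is already in the source. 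The point for you is that the bookkeeping you deferred is precisely where the argument lives: as written, with the action you quote and the $T^{-c}$ in the definition, the ``single global shear'' claim is false, and one of the two signs must be flipped before the proof closes.
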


\begin{proof}
 Suppose that 
 \[
  \De_w^1=\lwpplusone, \qquad \De_w^2 = \lwpplustwo\in[\lwp]
 \] 
 and $\De'_w = \big(\De', (\ell_{\la_j'}, +1, k_j')_{j=1}^\mf\big)$.  Then there exists some $d\in\Z$ such 
 that $k^1_j=k^2_j-d$ for $j=1,\ldots,\mf$ and $\De^1 = T^d(\De^2)$. 
 Since $p\in\permkkp$, there exists $c\in\Z$ such that $k_j^1 - k_{p(j)}' = c$
 for all $j$ and notice that this 
 means that $k_j^2 - k_{p(j)}' = c + d$ for all $j$. Therefore,
 \begin{align*}
  \distpolyp([\De_w^1],[\De'_w] ) &= \,\,\,\,\sum_{\mathclap{\vec{u}\in\{0,1\}^\mf}}\,\nu\big( t_{\vec{u}}(\De^1) \symdiff t_{p(\vec{u})}(T^{-c}(\De')) \big)\\
  &= \,\,\,\,\sum_{\mathclap{\vec{u}\in\{0,1\}^\mf}}\,\nu\big( t_{\vec{u}}(T^d(\De^2)) \symdiff t_{p(\vec{u})}(T^{-c}(\De')) \big)\\
  &= \,\,\,\,\sum_{\mathclap{\vec{u}\in\{0,1\}^\mf}}\,\nu\big(T^d (t_{\vec{u}}(\De^2) \symdiff t_{p(\vec{u})}(T^{-c-d}(\De'))) \big)\\
  &= \,\,\,\,\sum_{\mathclap{\vec{u}\in\{0,1\}^\mf}}\,\nu\big(t_{\vec{u}}(\De^2) \symdiff t_{p(\vec{u})}(T^{-(c+d)}(\De')) \big)\\
  &= \distpolyp([\De_w^2], [\De'_w])
 \end{align*}
 because admissible measures are invariant under vertical transformations 
 such as $T^d$.  The argument that this function is well defined in the 
 second input is similar.
\end{proof}

\subsection{Choice of \texorpdfstring{$\nu$}{the admissible measure} does not change the topology}

While the choice of admissible measure will change the metric it does 
not change the topology induced by that metric.  

\begin{lemma}\label{lem_convexinterval}
 Suppose that $\nu$ is an admissible measure and
 $\De_k, \De\in\poly$ for $k\in\N$ are such that 
 $\nu(\De_k\symdiff\De)\stackrel{k\to\infty}{\longrightarrow}0$.  
 Then there exists a vertical segment 
 $A=\{x_0\}\times[y_0,y_1]$, with $y_0<y_1$, and $K>0$ such
 that $A\subset \De_k \cap \De$ for all $k>K$.
\end{lemma}

\begin{proof}
 Fix any $N>0$ such that $\De\cap([-N,N]\times\R)$ has non-zero measure 
 with respect to $\nu$, and thus also with respect to $\mu$.  Since 
 $\nu$ is admissible we can find some $c>0$ such that 
 $\nicefrac{\text{d}\nu}{\text{d}\mu} > c$ on $[-N,N]\times\R$.
 
 For each $\varepsilon>0$ let 
 \[
  U_\varepsilon = \left\{ p\in\R^2 \middle| B_{\varepsilon}(p)\subset (-N,N)\times\R\big) \cap \De\right\}
 \] 
 where $B_{\varepsilon}(p)$ is the standard open ball of radius $\varepsilon$ centered at $p$
 and $\mathrm{int}(A)$ denotes the interior of the set $A$.

 Fix any $k\in\N$ and suppose $U_\varepsilon \setminus \De_k\neq \varnothing$.
 Because $\De_k$ is the intersection of closed half-planes 
 its complement, $\De_k^c$, is the union of open half-planes.  
 If $q\in U_\varepsilon \setminus \De_k$ then there exists some open 
 half-plane with boundary including $q$ which is a subset of $\De_k^c$.  Let $H_q$ 
 be the intersection of one such half-plane with $B_{\varepsilon}(q)$ so $H_q\subset \De\setminus \De_k$.
 Then, since $H_q\subset (-N,N)\times\R$,
 \[
  \nu(H_p)=\varint_{H_p}\frac{\text{d}\nu}{\text{d}\mu}\text{d}\mu > c \mu(H_p) = \frac{c}{2}\mu\big(B_{\varepsilon}(p)\big) = \frac{c \pi}{2} \varepsilon^2.
 \]
 Thus, 
 if $U_\varepsilon \setminus \De_k$ is non-empty then 
 $\nu(\De_k\symdiff\De)>\frac{c \pi}{2} \varepsilon^2$.
 
 Now choose $\varepsilon$ small enough that $U_\varepsilon$ is non-empty and 
 choose $K>0$ such that $k>K$ implies that $\nu(\De\symdiff\De_k)<\frac{c \pi}{2}\varepsilon^2$.
 If $U_\varepsilon \setminus \De_k\neq \varnothing$
 then $\nu(\De_k\symdiff\De)>\frac{c \pi}{2} \varepsilon^2$, so
 we conclude $U_\varepsilon \subset \De_k$ for $k>K$.  
 The set $U_\varepsilon$ has nonempty interior so we can find the 
 set $A$ as in the statement of the Lemma.
\end{proof}

Now we will use Lemma~\ref{lem_convexinterval} to prove Lemma~\ref{lem_admmsrseq},
which says that the same sequences of polygons converge with respect to any admissible measure.

\begin{lemma}\label{lem_admmsrseq}
 Suppose that $\nu_1,\nu_2$ are admissible measures and that $\De_k, \De\in\poly$ 
 for $k\in\N$ have $\nu_1(\De),\nu_1(\De_k)<\infty$. If 
 $\nu_1(\De_k\symdiff\De)\stackrel{k\to\infty}{\longrightarrow}0$
 then $\nu_2(\De_k\symdiff\De)\stackrel{k\to\infty}{\longrightarrow}0$.
\end{lemma}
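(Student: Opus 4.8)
The plan is to prove directly that $\nu_2(\De_k\symdiff\De)\to 0$ as $k\to\infty$. Write $g_i=\mathrm d\nu_i/\mathrm d\mu$ for $i=1,2$; by admissibility these depend only on $x$, are bounded above and below by positive constants on every compact interval, and satisfy $x\,g_i\in L^1(\mu)$. The only thing that can go wrong is that the ratio $\rho=g_2/g_1$ need not be bounded at $\pm\infty$, and essentially the whole argument is about controlling this at infinity. First I would note that, since $\nu_1(\De),\nu_1(\De_k)<\infty$, Proposition~\ref{prop_admissible} forces every one of these polygons to have everywhere finite height; applying the Proposition a second time, now with $\nu_2$, gives $\nu_2(\De),\nu_2(\De_k)<\infty$, and in particular $\nu_2(\De\cap\{|x|>N\})\to 0$ as $N\to\infty$.

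Next I would split $\R^2$ into the slab $S_N=\{|x|\le N\}$ and its complement $S_N^{\mathrm c}$. On $S_N$ the ratio $\rho$ is bounded by a constant $C_N$, so for each fixed $N$ we have $\nu_2\big((\De_k\symdiff\De)\cap S_N\big)\le C_N\,\nu_1(\De_k\symdiff\De)\to 0$ as $k\to\infty$. Outside $S_N$ I would estimate $\nu_2\big((\De_k\symdiff\De)\cap S_N^{\mathrm c}\big)\le\nu_2(\De\cap S_N^{\mathrm c})+\nu_2(\De_k\cap S_N^{\mathrm c})$; the first summand is small once $N$ is large, by the finiteness above, so the entire problem reduces to making $\nu_2(\De_k\cap S_N^{\mathrm c})$ small uniformly in (large) $k$ by taking $N$ large.

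For that last step I would bring in convexity through Lemma~\ref{lem_convexinterval}, which produces a fixed vertical segment $A=A_{x_0,y_0,y_1}$ with $A\subset\De_k\cap\De$ for all $k\ge K_0$; thus each $\De_k$ is convex with $H_k(x_0)\ge\ell_A:=y_1-y_0>0$, where $H_k(x)$ is the one-dimensional Lebesgue measure of $\De_k\cap(\{x\}\times\R)$. Since $H_k$ is concave on its support, for $N\ge x_0+2$ one obtains the two comparisons $H_k(x)\le\frac{x-x_0}{N-x_0}\,H_k(N)$ for $x\ge N$, and $H_k(x)\ge\frac12 H_k(N)$ for $x\in[N-1,N]$. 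A first pass at the bound then runs as follows: the first inequality gives $\nu_2(\De_k\cap\{x>N\})\le\frac{H_k(N)}{N-x_0}\int_N^\infty (x+|x_0|)\,g_2(x)\,\mathrm dx$, whose integral is small for large $N$ because $x\,g_2\in L^1(\mu)$; and the second gives $H_k(N)\le\frac{2}{\int_{N-1}^N g_1\,\mathrm dx}\,\big(\nu_1(\De\cap\{x>N-1\})+\nu_1(\De_k\symdiff\De)\big)$, which is small for $N$ and $k$ large; the region $x<-N$ is handled symmetrically.

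The main obstacle is that this first pass does not actually close: the factor $\big(\int_N^\infty x\,g_2\,\mathrm dx\big)\big/\big((N-x_0)\int_{N-1}^N g_1\,\mathrm dx\big)$ need not stay bounded as $N\to\infty$ — it blows up when $g_1$ decays much faster than $g_2$ — because bounding an entire far-out tail of $\De_k$ by the single cross-section $H_k(N)$ and then permitting worst-case linear growth of $H_k$ is too lossy. The point that must be exploited is geometric: convexity of $\De_k$ forbids $\De_k\symdiff\De$ from carrying appreciable mass far out without leaving comparable mass near the common segment $A$ — a thin sliver of $\De_k$ just outside the boundary of $\De$, located where $\nu_1$ and $\nu_2$ are comparable — so the excess cannot be invisible to $\nu_1$. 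Making this quantitative — by comparing, at each $x$, the width of the overhang of $\De_k$ over $\De$ against the cross-sections of the cone spanned by $A$ and the far-out points of $\De_k$, and carrying the weights $g_1,g_2$ together rather than factoring them apart — is the technical heart of the proof. Granting the uniform tail bound, one fixes $N$ large and then $k$ large and adds the slab and tail contributions to conclude $\nu_2(\De_k\symdiff\De)\to 0$; the reverse implication, and hence the independence of the induced topology from the choice of admissible measure asserted in Theorem~\ref{thm_main}, follows by interchanging the roles of $\nu_1$ and $\nu_2$.
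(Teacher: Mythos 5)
Your setup is right and your instincts are right, but the argument is not complete, and you say so yourself: the phrase ``Granting the uniform tail bound'' is exactly where the proof is missing. Everything before that point --- the slab decomposition, applying Proposition \ref{prop_admissible} twice to get $\nu_2(\De),\nu_2(\De_k)<\infty$, bounding the Radon--Nikodym ratio on $\{|x|\le N\}$, invoking Lemma \ref{lem_convexinterval} for the fixed segment $A$, and the observation that your first-pass cross-section argument is ``too lossy'' --- is correct and matches the paper's starting point. But the uniform bound on $\nu_2(\De_k\cap\{|x|>N\})$ for large $k$ is precisely the content of the lemma; the sentence about ``comparing the width of the overhang \ldots against the cross-sections of the cone'' gestures at the right geometric mechanism without turning it into a proof. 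As written this is a genuine gap, not an omitted routine step.

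The paper closes the gap by avoiding the need to estimate $\nu_2(\De_k)$ in the tail at all. It uses a \emph{three}-way decomposition $\R^2 = D_N \cup E_N \cup S_N$ rather than your slab/complement split: $D_N=[-N,N]\times\R$ is the slab where $\mathrm{d}\nu_2/\mathrm{d}\nu_1$ is bounded; $E_N$ is a far-out ``wedge'' region lying entirely outside the asymptotic cone of $\De$; and $S_N$ is the leftover thin sliver, contained in a cone of linear growth, so $\nu_2(S_N)<\infty$ and $\nu_2(S_N)\to0$ as $N\to\infty$ \emph{independently of the $\De_k$}. The only thing the paper must show about $\De_k$ is that $\De_k\cap E_N=\varnothing$ for large $k$, and this is done with an explicit compact triangle $G$: one takes a supporting line $\ell_1$ of $\De$ through its top-boundary point over $x_0$ and two auxiliary lines $\ell_2,\ell_3$ through $(x_0,y_1)$ and $(x_0,y_0)$ to build a triangle $G$ disjoint from $\De$, of fixed positive $\nu_1$-measure. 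If $\De_k\supset A$ and $\De_k$ contained any point of $E_N$, convexity would force $\De_k$ to contain a $\nu_1$-preserving vertical shear of $G$; hence $\nu_1(\De_k\symdiff\De)\ge\nu_1(G)>0$, contradicting $\nu_1(\De_k\symdiff\De)\to0$. This is the ``quantitative'' version of the principle you describe --- that appreciable mass far out must leave comparable mass near $A$, where $\nu_1$ sees it --- and it is carried out concretely with a fixed witness set rather than via cross-sectional inequalities. If you want to finish your own line of attack, you should replace the pointwise estimate $H_k(x)\le\frac{x-x_0}{N-x_0}H_k(N)$ by this kind of argument: identify a compact set of definite $\nu_1$-measure that must fall into $\De_k\symdiff\De$ whenever $\De_k$ reaches far out, and use a separate region (the analogue of $S_N$) to absorb the part of the tail you cannot control.
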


\begin{proof}
 Suppose that $\nu_1(\De_k\symdiff\De)\stackrel{k\to\infty}{\longrightarrow}0$ and let 
 $A,x_0,y_0$, and $y_1$ be as in Lemma~\ref{lem_convexinterval}.  We know that 
 the line $\{x=x_0\}$ intersects $\De$ so it must intersect the top boundary of
 $\De$, since $\De$ has everywhere finite height by Proposition~\ref{prop_admissible}.
 Since a convex set is the intersection of half-planes there must exist a line $\ell_1$
 which goes through the point where $\{x=x_0\}$ intersects the top boundary such that 
 all of $\De$ is in a closed half-plane bounded by $\ell_1$ (as in Figure \ref{fig_admlem1}).
 Such a line may not be unique if there is a vertex with $x$-coordinate equal to $x_0$, 
 but any choice of such a line will do.
  
\begin{figure}
 \centering
 \includegraphics[height=80pt]{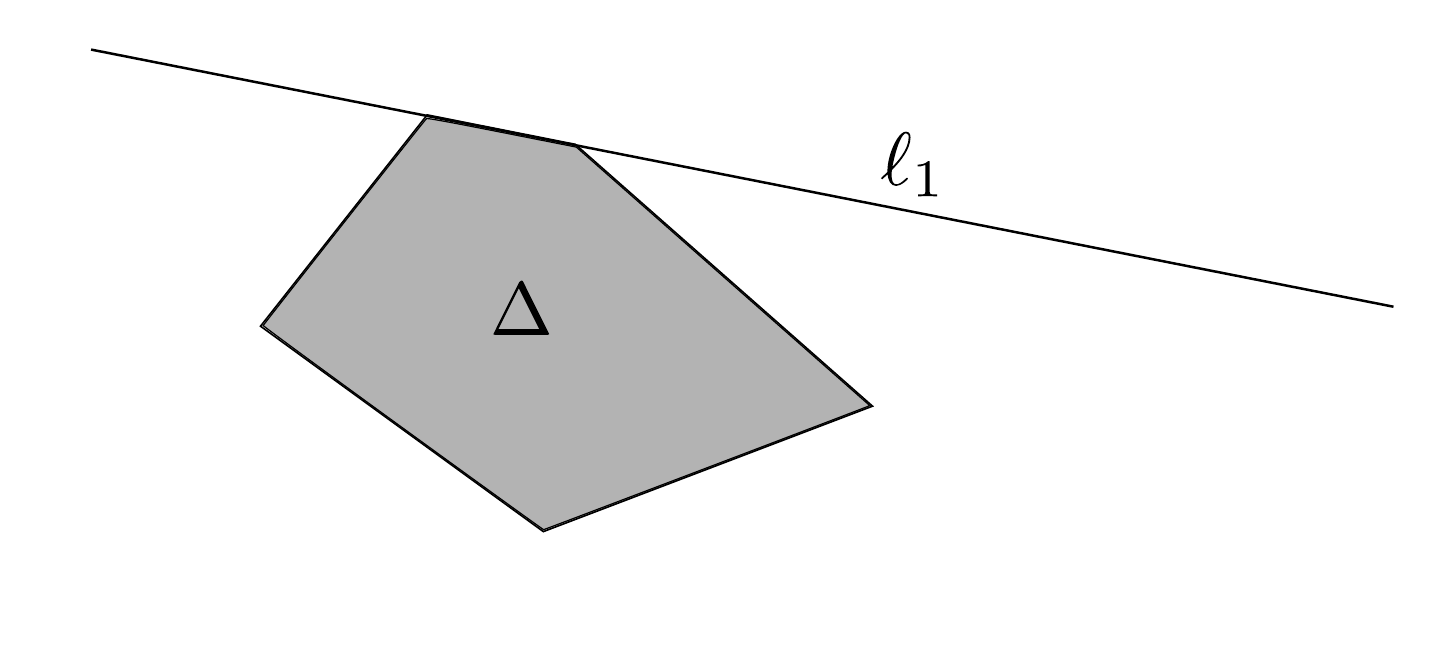}
 \caption{Since $\De$ is convex it must all lie on the same side of $\ell_1$.}
 \label{fig_admlem1}
\end{figure}

The situation we describe next is shown in Figure~\ref{fig_admlem2}. 
Let $m$ denote the slope of $\ell_1$ and let $\ell_2$ be the line 
through $(x_0,y_1)$ with slope $m+1$. Let $m'$ denote the slope of 
the line through the point $(x_0,y_0)$ and the point which is the 
intersection of $\ell_1$ with $\ell_2$. Finally let $\ell_3$ be the
line through $(x_0,y_0)$ with slope $\nicefrac{(m+m')}{2}$.  Since 
the slope of $\ell_3$ is greater than the slope of $\ell_2$ these 
two lines must intersect at some $x$-coordinate greater than $x_0$,
but since the slope of $\ell_3$ is less than $m'$ we know that the 
intersection of $\ell_2$ and $\ell_3$ must be to the right of
the intersection of $\ell_1$ and $\ell_2$.  Thus the lines 
$\ell_1, \ell_2$, and $\ell_3$ bound a triangle which we will denote
by $G$, as is shown in Figure \ref{fig_admlem2}.  
Let $N_1 = \max_{s\in G}\pi_1 (s)$.  Since $\De$ is on one side
of $\ell_1$ and $G$ is on the other we conclude that $G\cap \De = \varnothing$.

\begin{figure}[b]
 \centering
 \includegraphics[height=150pt]{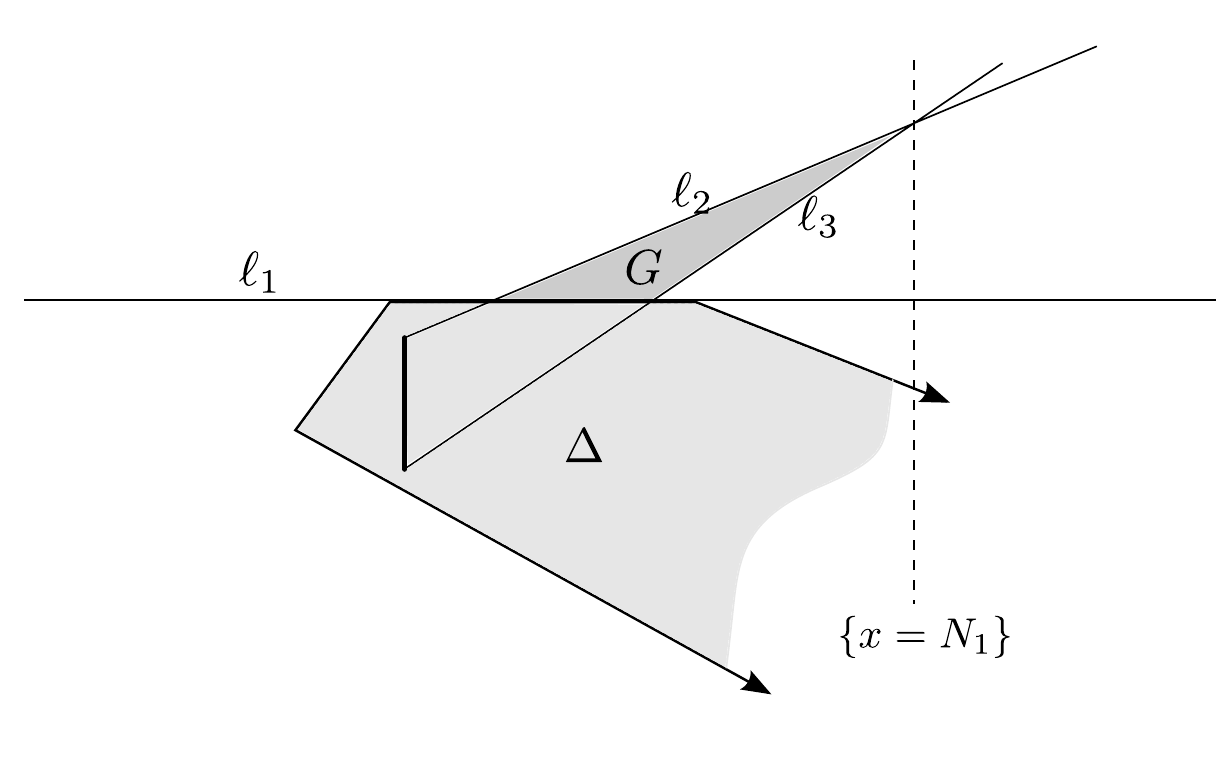}
 \caption{The lines $\ell_1,\ell_2,\ell_3$, and triangle $G$.}
 \label{fig_admlem2}
\end{figure}

For any $N>x_0$ let $E_N^1$ denote the region of $\R^2$ 
which has $x>N$ and is above or on $\ell_2$.
Now suppose that $k$ is large enough so that $A\subset\De_k$ and 
let $p\in E_{N_1}^1\cap\ell_2$.
Then $p\in\De_k$ implies that $G\subset\De_k\symdiff\De$ because $\De_k$ 
is convex and $\De\cap G=\varnothing$.  Similarly, if $p$ is any other 
point in $E^1_{N_1}$ we can conclude that some $\nu_1$-preserving 
transformation of $G$ must be contained in $\De_k\symdiff\De$.  This is
because moving $p$ vertically will result in acting on $G$ by some 
matrix $T^r$ (as in Equation \eqref{eqn_tk}) with $r\in\R$ with origin
on the line $\{x=x_0\}$ (see Figure \ref{fig_admlem4}).  In any case, 
if $\De_k\cap E_{N_1}^1$ is nonempty and $k$ is large enough so that 
$A\subset\De_k$ then we can conclude that 
$\nu_1 (\De\symdiff\De_k)\geq\nu_1 (G)>0$.  
Since $\nu_1 (\De\symdiff\De_k)\stackrel{k\to\infty}{\longrightarrow} 0$ 
we can conclude that for large enough $k$ the set $\De_k\cap E_{N}^1$ is empty.

\begin{figure}
 \centering
 \includegraphics[height=80pt]{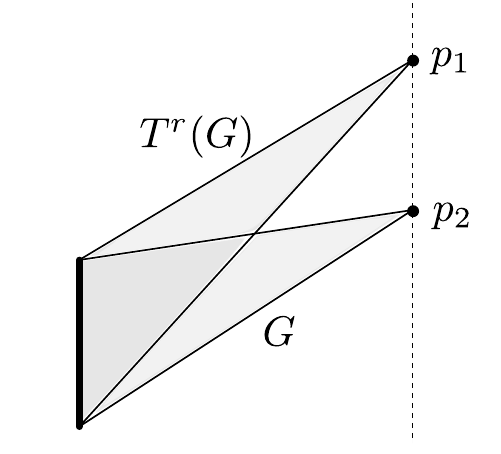}
 \caption{Notice that for a fixed vertical line segment $A\subset\R^2$ the measure 
          of the convex hull of $A$ and $p\in\R^2$ only depends on the $x$-component
          of $p$.  This is because if $p_1,p_2\in\R^2$ with $\pi_1(p_1)=\pi_2(p_2)$
          then the convex hulls are related by a vertical transformation.}
 \label{fig_admlem4}
\end{figure}

Using a similar argument, one can define sets $E_{N}^i$ for $i=2,3,4$ that 
must also be disjoint from $\De_k$ for large enough $k$ and $N$;
these are shown in Figure~\ref{fig_admlem3}.
The sets $E_{N}^1$ and $E_{N}^2$ are bounded to the left by the line $\{x=N\}$
and the sets $E_{N}^3$ and $E_{N}^4$ are bounded to the right by $\{x=-N\}$. 
The sets $E_{N}^1$ and $E_{N}^4$ are bounded below by lines and the sets 
$E_{N}^2$ and $E_{N}^3$ are bounded above by lines.  Let 
$E_N = \cup_{i=1}^4 E_{N}^i$ and let $N_2>N_1$ be large enough so that for
large enough $k$ we have that $\De_k \cap E_{N_2}=\varnothing$.  Let 
$D_N=[-N,N]\times\R$ for $N\in\R$ and let $S_N = \R^2 \setminus (E_N \cup D_N)$.

Fix $\varepsilon>0$.  Notice that for each $N>0$ the set $S_N$ is of finite $\nu_2$-measure.  
Since $\{S_N\}_{N>0}$ are nested we conclude that $\lim_{N\to\infty} \nu_2 (S_N) = 0$.  
Now choose some fixed $N_3>N_2$ and $K_1>0$ such that $\nu_2(S_{N_3})<\varepsilon$ 
and $k>K_1$ implies that $\De_k \cap E_{N_3}=\varnothing$.  
Since both $\nu_1$ and $\nu_2$ are admissible measures we know that their 
Radon-Nikodym derivative is bounded on $D_{N_3}$.  This is because 
\[
 \frac{\mathrm{d}\nu_2}{\mathrm{d}\nu_1}=\frac{\mathrm{d}\nu_2}{\mathrm{d}\mu}\left(\frac{\mathrm{d}\nu_1}{\mathrm{d}\mu}\right)^{-1},
\]
which are both bounded on $D_{N_3}$.  Let $c>0$ be such that 
$\nicefrac{\text{d}\nu_2}{\text{d}\nu_1}<c$ on $D_{N_3}$.  
Now choose $K_2>K_1$ such that $k>K_2$ implies 
$\nu_1(\De\symdiff\De_k)<\varepsilon$.
Finally, for $k>K_2$ we have
\begin{align*}
 \nu_2(\De_k\symdiff\De)&=\varint_{\R^2} \abs{\chi_{\De_k} - \chi_\De}\, \mathrm{d}\nu_2\\
                        &=\varint_{S_{N_3}} \abs{\chi_{\De_k} - \chi_\De}\, \mathrm{d}\nu_2 + \varint_{E_{N_3}} \abs{\chi_{\De_k} - \chi_\De}\, \mathrm{d}\nu_2 + \varint_{D_{N_3}} \abs{\chi_{\De_k} - \chi_\De}\, \mathrm{d}\nu_2\\
                        &\leq \nu_2(S_{N_3}) + 0 + \varint_{D_{N_3}} \abs{\chi_{\De_k} - \chi_\De}\frac{\text{d}\nu_2}{\text{d}\nu_1}\, \mathrm{d}\nu_1\\
                        &< \varepsilon + c \, \nu_1(\De_k\symdiff\De)\\
                        &< (1+c) \varepsilon,
\end{align*}
which can be made arbitrarily small.
\end{proof}

\begin{figure}[t]
 \centering
 \includegraphics[height=120pt]{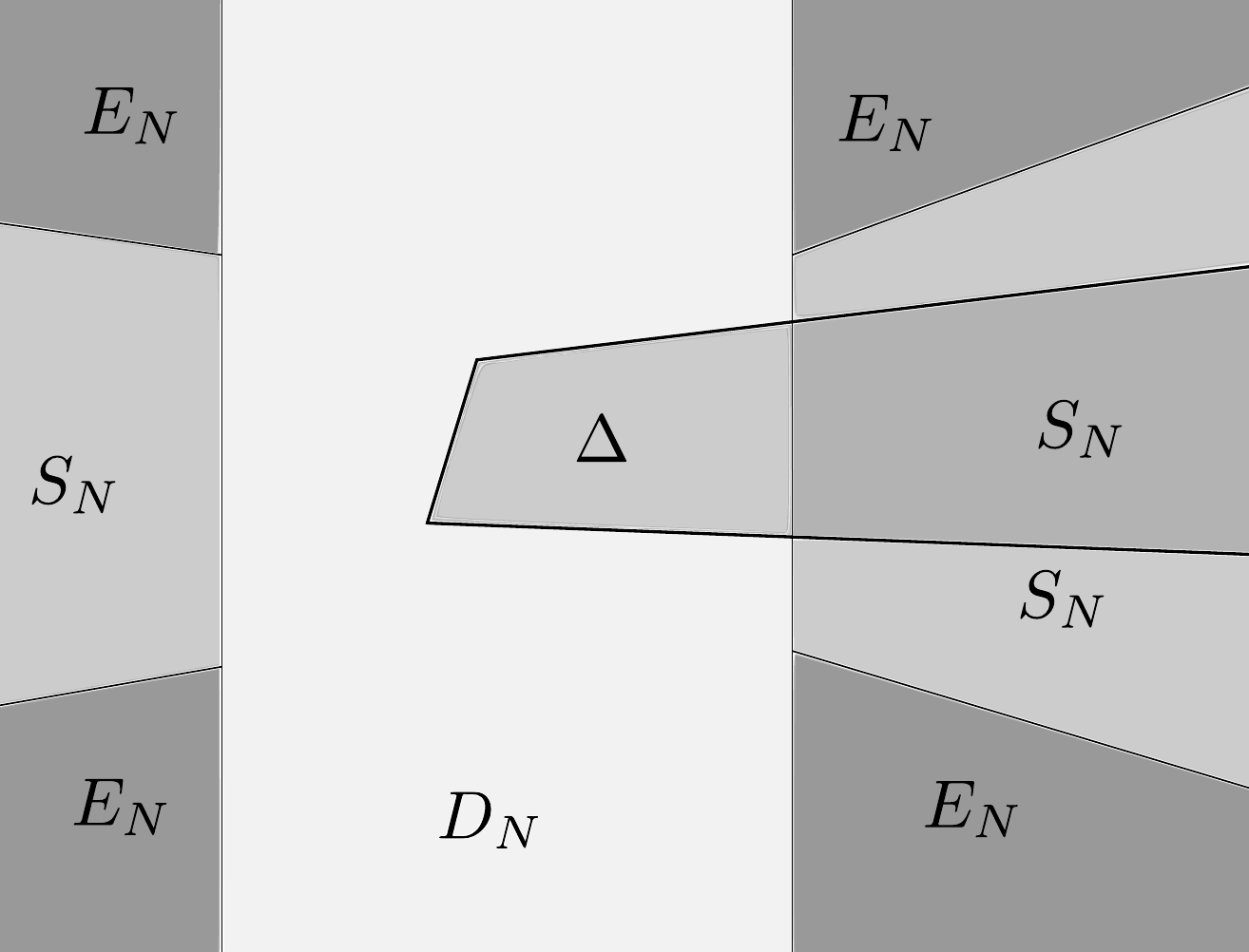}
 \caption{For large choices of $N$ and $k$ the set $S_N$ is small and the 
 set $E_N$ has empty intersection with $\De_k$.  Then we can concentrate 
 on the set $D_N$, on which the Radon-Nikodym derivative 
 $\nicefrac{\text{d}\nu_2}{\text{d}\nu_1}$ is bounded.}
 \label{fig_admlem3}
\end{figure}

By combining Lemma \ref{lem_admmsrseq} and Proposition \ref{prop_rxyz}  we have the following corollary.

\begin{cor}\label{cor_doesnotdepend}
 Fix a nonnegative integer $\mf\in\Z_{\geq0}$, a vector $\vec{k}\in\Z^\mf$,
 any two linearly summable sequences $\bn$ and $\bnp$, and two admissible 
 measures $\nu$ and $\nu'$.  Then the metric spaces 
 $(\mmk,d^{\nu,\bn}_{\mf,[\vec{k}]})$ and $(\mmk,d^{\nu',\bnp}_{\mf,[\vec{k}]})$
 have the same topology generated by their respective metrics.
\end{cor}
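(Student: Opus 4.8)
The plan is to deduce the corollary from the two results it cites, organized around the observation that $\D$ is a minimum over a finite set of permutations of a finite sum of terms, each term living in a factor whose topology is already known to be choice-independent. First I would reduce the claim to a statement about sequences: both $(\mmk,\D)$ and $(\mmk,d^{\nu',\bnp}_{\mf,[\vec{k}]})$ are metric, hence first countable, so it suffices to show the identity map is sequentially continuous in both directions, i.e. that $\mele_n\to\mele$ holds in one metric exactly when it holds in the other. By the symmetry between the pairs $(\nu,\bn)$ and $(\nu',\bnp)$ it is enough to prove that $\D(\mele_n,\mele)\to 0$ forces $d^{\nu',\bnp}_{\mf,[\vec{k}]}(\mele_n,\mele)\to 0$.

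Next I would unwind the definition of $\D$. Since $\D=\min_{p}\Dp$ with the minimum taken over a finite set of permutations, a subsequence argument applies: assuming $d^{\nu',\bnp}_{\mf,[\vec{k}]}(\mele_n,\mele)\not\to 0$, pass to a subsequence bounded away from $0$ in $d^{\nu',\bnp}_{\mf,[\vec{k}]}$ (along which $\D(\mele_n,\mele)\to 0$ still holds), then to a further subsequence on which a single permutation $p$ realizes the minimum for every $n$, so that $\Dp(\mele_n,\mele)\to 0$ there. Because $\Dp$ is a finite sum of nonnegative terms (one polygon term $\distpolyp$ and $\mf$ copies each of $\dtsz$ and $\dh$, by Definition \ref{def_metric}), each summand tends to $0$: writing out the ingredients of $\mele_n$ and $\mele$ with polygon parts $[\De^n_\text{w}]$, $[\dew]$, volume parts $h^n_j$, $h_j$, and Taylor parts $(S^n_j)^\infty$, $(S_j)^\infty$, one gets $\dh(h^n_j,h_{p(j)})\to 0$ and $\dtsz((S^n_j)^\infty,(S_{p(j)})^\infty)\to 0$ for each $j$, and $\distpolyp([\De^n_\text{w}],[\dew])\to 0$.

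Then I would treat the three types of factor. The volume term uses $\dh$, the standard metric on $\R$, which involves neither choice. The Taylor terms are handled by Proposition \ref{prop_rxyz}, which characterizes convergence in $\rxyz$ independently of the linear summable sequence, so $\dtsz$-convergence yields $d^\bnp_0$-convergence. For the polygon term I would first invoke Proposition \ref{prop_tkvolume} to replace the representative of $[\De^n_\text{w}]$ by a $\tk$-translate making the shift $c$ in Definition \ref{def_distpolyp} vanish, so that $\distpolyp([\De^n_\text{w}],[\dew])=\sum_{\vec u\in\{0,1\}^\mf}\nu(t_{\vec u}(\De^n)\symdiff t_{p(\vec u)}(\De))$; each of the finitely many symmetric-difference terms then tends to $0$. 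For fixed $\vec u$ the polygon $t_{p(\vec u)}(\De)$ is fixed, while each $t_{\vec u}(\De^n)$ is a rational convex polygon (convexity by Proposition \ref{prop_convex}, rationality because $t_{\vec u}$ is built from the integral shears $T^k$) of finite $\nu$-measure: the shears making up $t_{\vec u}$ fix every vertical line and only translate its intersection with the polygon, so everywhere finite height is preserved and Proposition \ref{prop_admissible} applies. Hence Lemma \ref{lem_admmsrseq} applies with $\nu_1=\nu$ and $\nu_2=\nu'$ and gives $\nu'(t_{\vec u}(\De^n)\symdiff t_{p(\vec u)}(\De))\to 0$ for each $\vec u$, so the polygon term computed with $\nu'$ tends to $0$ as well.

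Combining the three parts, $d^{p,\nu',\bnp}_{\mf,\vec{k}}(\mele_n,\mele)\to 0$ along the subsequence, whence $d^{\nu',\bnp}_{\mf,[\vec{k}]}(\mele_n,\mele)\to 0$ there, contradicting the choice of subsequence; by symmetry the reverse implication holds too, and the topologies coincide. (When $\mf=0$ there are no Taylor or volume factors and the permutation set is trivial, so the statement is immediate from Lemma \ref{lem_admmsrseq}.) I expect the main obstacle to be organizational rather than substantive: bookkeeping the minimum over permutations, handled by the subsequence-with-constant-permutation device, and checking that the sheared polygons $t_{\vec u}(\De^n)$ meet the hypotheses of Lemma \ref{lem_admmsrseq}; no genuinely new estimate is required.
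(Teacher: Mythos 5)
Your proposal is correct and follows the same route the paper takes, which is simply to combine Lemma~\ref{lem_admmsrseq} (for the polygon factor) with Proposition~\ref{prop_rxyz} (for the Taylor series factor); the paper states the corollary as an immediate consequence of these two results without spelling out the bookkeeping. Your extra care — the subsequence-with-constant-permutation device to reduce from the minimum $\D$ to a fixed $\Dp$, the use of Proposition~\ref{prop_tkvolume} to normalize the shift $c$, and the check that the sheared polygons $t_{\vec u}(\De^n)$ satisfy the hypotheses of Lemma~\ref{lem_admmsrseq} via Propositions~\ref{prop_convex} and~\ref{prop_admissible} — fills in precisely the details the paper elides, so this is the same argument made explicit rather than a different one.
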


\subsection{\texorpdfstring{$d$}{The function} is a metric}

While it does not hold in general that the minimum of even a finite 
collection of metrics will be itself a metric, it does hold in this
particular case. For this section fix an admissible measure $\nu$, 
a linear summable sequence $\bn$, a nonnegative integer $\mf$, 
and $\vec{k},\vec{k}'\in\Z^\mf$.  Let $\Dplain$ denote $\D$ and 
let $\Dpplain$ denote $\Dp$, as given in Definition~\ref{def_metric}.
It is clear that $\Dplain$ is positive 
definite and it is symmetric because $\permkkp$ is closed under inverses 
so we must only show that the triangle inequality holds.  We show 
this in Lemma~\ref{lem_triangleineq_d} but first we must prove two lemmas.
\begin{lemma}\label{lem_permkkpp}
 Fix $\vec{k},\vec{k}',\vec{k}''\in\Z^\mf$ and let $\permkkp$ be as in 
 Definition~\ref{def_permkkp}.  Then for any fixed $q\in \permkkpp$ 
 we have that $\permkppkp = \{p\circ q^{-1} \vert p\in \permkkp\}$.
\end{lemma}

\begin{proof}
 Let $r\in\permkppkp$.  Then there exist constants $c_1,c_2\in\Z$ such 
 that 
 \[
  k_j-k_{q(j)}'' = c_1 \text{ and } k_j'' - k_{r(j)}' = c_2
 \]
 for all $j=1,\ldots,\mf$. In particular, for $i = q(j)$ we have
\[
  k_j-k_{r(q(j))}' = (c_1+k_i'')-(k_i''-c_2) = c_1+c_2
\]
 and so we conclude that $p=r\circ q\in\permkkp$ and clearly 
 $r=p\circ q^{-1}$ so $\permkppkp \subset \{p\circ q^{-1} \vert p\in \permkkp\}$.
 
 Now let $p\in\permkkp$ and $q\in\permkkpp$ so there are constants 
 $c,c_1\in\Z$ such that 
 \[
  k_j -k_{p(j)}' = c \text{ and } k_j -k_{q(j)}'' = c_1.
 \]
 Subtracting these two equations gives $k_{q(j)}''-k_{p(j)}' = c-c_1$
 and thus $p\circ q^{-1}\in\permkppkp$.
\end{proof}

\begin{lemma}\label{lem_dpdqdpq}
 Let $\mele,\melep,\melepp\in\mmk$ and suppose $p\in\permkkp$ and 
 $q\in\permkkpp.$  Then 
 \[
  \Dpplain (\mele,\melep)\leq \Dqplain (\mele,\melepp) + \Dpqplain (\melepp,\melep).
 \]
\end{lemma}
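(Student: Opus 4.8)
The plan is to establish the inequality summand by summand. By Definition \ref{def_metric}, $\Dpplain(\mele,\melep)$ is the sum of the single polygon term $\distpolyp([\dew],[\dewp])$ together with the $\mf$ Taylor-series terms $\dtsz((S_j)^\infty,(S_{p(j)}')^\infty)$ and the $\mf$ volume terms $\dh(h_j,h_{p(j)}')$, so I would bound each of these three groups of terms separately. Write $\vec{k},\vec{k}',\vec{k}''$ for the twisting indices of $\mele,\melep,\melepp$, and let $c,c_1\in\Z$ be the constants with $k_j-k_{p(j)}'=c$ and $k_j-k_{q(j)}''=c_1$ for all $j$; by the proof of Lemma \ref{lem_permkkpp}, $p\circ q^{-1}\in\permkppkp$ with associated constant $c-c_1$.

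For the Taylor-series and volume terms I would simply use that $\dtsz$ is a metric (Proposition \ref{prop_rxyz}) and that $\dh$ is the standard metric on $\R$. For each $j$,
\[
\dtsz\bigl((S_j)^\infty,(S_{p(j)}')^\infty\bigr)\le\dtsz\bigl((S_j)^\infty,(S_{q(j)}'')^\infty\bigr)+\dtsz\bigl((S_{q(j)}'')^\infty,(S_{p(j)}')^\infty\bigr),
\]
and likewise for $\dh$. Summing over $j$, the first pieces are exactly the Taylor-series (resp. volume) part of $\Dqplain(\mele,\melepp)$; in the second pieces I reindex by the bijection $i=q(j)$, which turns $\sum_j\dtsz((S_{q(j)}'')^\infty,(S_{p(j)}')^\infty)$ into $\sum_i\dtsz((S_i'')^\infty,(S_{(p\circ q^{-1})(i)}')^\infty)$ --- precisely the Taylor-series part of $\Dpqplain(\melepp,\melep)$ --- and similarly for the $h$'s.

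For the polygon term, writing $\De,\De',\De''$ for representatives of the polygon invariants of $\mele,\melep,\melepp$, I would begin from the triangle inequality for the symmetric difference of sets under any measure, $\nu(A\symdiff B)\le\nu(A\symdiff C)+\nu(C\symdiff B)$ (which follows from $\chi_{A\symdiff B}\le\chi_{A\symdiff C}+\chi_{C\symdiff B}$ pointwise and integration), applied for each $\vec{u}\in\{0,1\}^\mf$ with $A=t_{\vec u}(\De)$, $B=t_{p(\vec u)}(T^{-c}(\De'))$, and $C=t_{q(\vec u)}(T^{-c_1}(\De''))$ the corresponding cut polygon of $\melepp$ shifted by $T^{-c_1}$. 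Summing over $\vec u$, the terms $\nu(t_{\vec u}(\De)\symdiff t_{q(\vec u)}(T^{-c_1}(\De'')))$ sum to $d_{\text{P}}^{q,\nu}([\dew],[\De_\text{w}''])$ by Definition \ref{def_distpolyp}. For the remaining sum $\sum_{\vec u}\nu(t_{q(\vec u)}(T^{-c_1}(\De''))\symdiff t_{p(\vec u)}(T^{-c}(\De')))$, I would factor out the global shear $T^{-c_1}$ --- using that it commutes with each $t_{\vec v}$, that $T^{-c}=T^{-c_1}T^{-(c-c_1)}$, and that $\nu$ is invariant under the vertical transformation $T^{-c_1}$ --- and then reindex the sum over $\{0,1\}^\mf$ and appeal to Proposition \ref{prop_tkvolume} together with Lemma \ref{lem_permkkpp} to identify what remains with $d_{\text{P}}^{p\circ q^{-1},\nu}([\De_\text{w}''],[\dewp])$, whose associated constant is $c-c_1$. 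Adding the three estimates yields the lemma.

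The step I expect to be the main obstacle is the polygon term, and specifically the combinatorial bookkeeping in it: one has to keep track of which permutation ($p$, $q$, or $p\circ q^{-1}$) and which twisting constant ($c$, $c_1$, or $c-c_1$) is attached to each polygon in every symmetric difference, so that after inserting the intermediate cut polygons of $\melepp$ and absorbing the shears, the leftover sum genuinely reassembles --- up to the harmless change of representatives legitimized by Proposition \ref{prop_tkvolume} --- into $d_{\text{P}}^{p\circ q^{-1},\nu}([\De_\text{w}''],[\dewp])$ and not into some conjugate or shear of it.
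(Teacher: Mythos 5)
Your proposal is correct and follows essentially the same strategy as the paper's proof: insert the corresponding $\melepp$-terms (indexed by $q$) as intermediate points, apply the triangle inequality summand by summand for $\nu(\cdot\symdiff\cdot)$, $\dtsz$, and $\dh$, and identify the two resulting sums with $\Dqplain(\mele,\melepp)$ and $\Dpqplain(\melepp,\melep)$. The paper simply asserts that final identification, whereas you spell out the reindexing by $i=q(j)$ (and by $\vec{v}=q(\vec{u})$) together with the shear-factoring via $T^{-c}=T^{-c_1}T^{-(c-c_1)}$ and the $\nu$-invariance of $T^{-c_1}$ — the same ingredients used in the paper's proof of Proposition \ref{prop_tkvolume} — so the added detail is consistent with, not a departure from, the paper's argument.
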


\begin{proof}
 The $\mf=0$ case is trivial so assume $\mf>0$.
 Since $p\in\permkkp$ and $q\in\permkkpp$ there must be constants 
 $c,c_1\in\Z$ such that 
 \[
  k_j -k_{p(j)}' = c \text{ and } k_j -k_{q(j)}'' = c_1.
 \]
 Because $\Dpplain$ is a sum of distances we can use the triangle 
 inequality for each term with an appropriate permutation on the elements:
 \begin{align*}
  \Dpplain (\mele,\melep) =& \sum_{\vec{u}\in\{0,1\}^\mf}\nu\big( t_{\vec{u}}(\De) \symdiff t_{p(\vec{u})}(T^{-c}(\De')) \big)\\ &+ \sum_{j=1}^\mf \Big( \dtsz((S_j)^\infty, (S_{p(j)}')^\infty) + \big|h_j - h_{p(j)}'\big|\Big)\\
                 \leq& \sum_{\vec{u}\in\{0,1\}^\mf}\Big[\nu\big( t_{\vec{u}}(\De) \symdiff t_{q(\vec{u})}(T^{-c_1}(\De'')) \big)+\nu\big( t_{q(\vec{u})}(T^{-c_1}(\De'')) \symdiff t_{p(\vec{u})}(T^{-c}(\De')) \big)\Big] \\&+ \sum_{j=1}^\mf \Big( \dtsz((S_j)^\infty, (S_{q(j)}'')^\infty) + \dtsz((S_{q(j)}'')^\infty, (S_{p(j)}')^\infty) \\
                 &+ \big|h_j - h_{q(j)}''\big|+\big|h_{q(j)}'' - h_{p(j)}'\big|\Big) \\
                 =& \Dqplain (\mele,\melepp) + \Dpqplain (\melepp,\melep).
 \end{align*}

\end{proof}

Notice that in the case that $p=q=\text{Id}$ this gives a proof of the 
triangle inequality for $\Didplain$.

\begin{lemma}\label{lem_triangleineq_d}
 The triangle inequality holds for $\Dplain$.
\end{lemma}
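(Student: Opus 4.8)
The plan is to deduce the triangle inequality for $\Dplain$ from Lemma~\ref{lem_dpdqdpq} together with the bookkeeping on appropriate permutations provided by Lemma~\ref{lem_permkkpp}. Let $\mele,\melep,\melepp\in\mmk$, and write $\mele\in\mmknoperm$ with twisting index $\vec{k}$, and similarly let $\melep$ have twisting index $\vec{k}'$ and $\melepp$ have twisting index $\vec{k}''$ (all equivalent under $\sim$, since all three lie in $\mmk$). The quantity $\Dplain(\mele,\melep)$ is by Definition~\ref{def_metric} the minimum of $\Dpplain(\mele,\melep)$ over $p\in\permkkp$, and similarly for the other two pairs. So it suffices to produce, for every $p\in\permkkp$, a bound $\Dpplain(\mele,\melep)\le \D(\mele,\melepp)+\D(\melepp,\melep)$; then taking the minimum over $p$ on the left gives the claim. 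Actually it is cleaner to go the other way: choose $q\in\permkkpp$ achieving $\D(\mele,\melepp)=\Dqplain(\mele,\melepp)$ and $r\in\permkppkp$ achieving $\D(\melepp,\melep)=\Drplain(\melepp,\melep)$ (these minima are attained since $\perm$ is finite).

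First I would set $p := r\circ q$. The key point is that $p\in\permkkp$: this is exactly the content of Lemma~\ref{lem_permkkpp}, which gives $\permkppkp=\{p'\circ q^{-1}\mid p'\in\permkkp\}$, so $r=p'\circ q^{-1}$ for some $p'\in\permkkp$ and hence $p'=r\circ q=p\in\permkkp$. Moreover, with this choice, $p\circ q^{-1}=r$, so the ``mixed'' distance $\Dpqplain(\melepp,\melep)$ appearing in Lemma~\ref{lem_dpdqdpq} is precisely $\Drplain(\melepp,\melep)$. Then Lemma~\ref{lem_dpdqdpq} yields
\[
\Dplain(\mele,\melep)\ \le\ \Dpplain(\mele,\melep)\ \le\ \Dqplain(\mele,\melepp)+\Dpqplain(\melepp,\melep)\ =\ \Dqplain(\mele,\melepp)+\Drplain(\melepp,\melep)\ =\ \D(\mele,\melepp)+\D(\melepp,\melep),
\]
where the first inequality is because $p\in\permkkp$ so $\Dpplain(\mele,\melep)$ is one of the terms over which $\Dplain(\mele,\melep)$ takes a minimum. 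This is the whole argument; I would also dispose of the trivial $\mf=0$ case first (where $\permkkp$ is a singleton and $\Dplain=\distpoly+\text{(no sum)}$, so the inequality is immediate from the triangle inequalities for $\distpoly$ on polygons and $\dh$).

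The main obstacle — really the only subtlety — is making sure the permutation arithmetic is consistent, i.e.\ that $p=r\circ q$ genuinely lies in $\permkkp$ and that the constant $c$ implicit in $\Dpplain$ for this $p$ is the sum of the constants for $q$ and $r$; both facts are exactly what Lemma~\ref{lem_permkkpp} (and its proof, which tracks the constants $c_1,c_2$ and shows their sum works) delivers, and Lemma~\ref{lem_dpdqdpq} is stated with precisely the compatible constants, so there is nothing left to check by hand. One should be slightly careful that the minima defining $\D$ are attained — this is fine since $\permkkp\subset\perm$ is finite and nonempty (nonemptiness is Definition~\ref{def_permkkp}, using $\vec{k}\sim\vec{k}'$, which holds because $\mele,\melep$ lie in the same $\mmk$). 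Combined with positive-definiteness and symmetry (noted already in the paragraph before Lemma~\ref{lem_permkkpp}, using that $\permkkp$ is closed under inverses), this completes the verification that $\D$ is a metric on $\mmk$.
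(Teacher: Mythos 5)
Your argument is correct and essentially identical to the paper's: both proofs fix a minimizer $q\in\permkkpp$ for $\Dplain(\mele,\melepp)$, invoke Lemma~\ref{lem_permkkpp} to identify $\permkppkp$ with $\{p\circ q^{-1}\mid p\in\permkkp\}$, and then apply Lemma~\ref{lem_dpdqdpq}. The only cosmetic difference is that you also name an explicit minimizer $r\in\permkppkp$ and set $p=r\circ q$ before applying the lemma, whereas the paper keeps the minimum over $p\in\permkkp$ in play and observes that $\min_{p\in\permkkp}\Dpqplain(\melepp,\melep)=\Dplain(\melepp,\melep)$; the bookkeeping is the same.
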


\begin{proof}
 Let $\mele,\melep,\melepp\in\mmk$. There exists some $q\in\permkkpp$ 
 such that $\Dplain (\mele, \melepp) = \Dqplain (\mele, \melepp)$ and 
 by Lemma~\ref{lem_permkkpp} we know that 
 \[
 \min_{p\in\permkkp}\{ \Dpqplain (\melepp, \melep)\}=\Dplain (\melepp,\melep).
 \]
 Now, using the inequality from Lemma \ref{lem_dpdqdpq} we have that
 \begin{align*}
  \Dplain (\mele, \melep) &= \min_{p\in\permkkp} \{ \Dpplain (\mele, \melep)\}\\
                 &\leq \min_{p\in\permkkp} \{ \Dqplain (\mele, \melepp) + \Dpqplain (\melepp, \melep)\}\\
                 &= \Dqplain (\mele, \melepp) + \min_{p\in\permkkp}\{ \Dpqplain (\melepp, \melep) \}\\
                 &= \Dplain (\mele, \melepp) + \Dplain (\melepp, \melep)
 \end{align*}
as desired.
\end{proof}

Combining the arguments in Sections~\ref{sec_metricts} and~\ref{subsec_poly} with the present section, 
in particular Proposition~\ref{prop_rxyz} and Lemma~\ref{lem_triangleineq_d}, we get the following.

\begin{prop}\label{prop_metric}
 Let $\mf\in\Z_{\geq0}$, $\vec{k}\in\Z^\mf$, $\bn$ be a linear summable 
 sequence, and $\nu$ an admissible measure.  Then the space 
 $\metricspacemmk$ is a metric space.
\end{prop}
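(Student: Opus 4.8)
The plan is to verify the metric axioms for $\D=\min_{p\in\permkkp}\Dp$ on $\mmk$ directly. The triangle inequality is exactly Lemma \ref{lem_triangleineq_d}, so what remains is finiteness together with well-definedness, non-negativity, symmetry, and the identity of indiscernibles.

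For finiteness and well-definedness: given $\mele,\melep\in\mmk$, the index set $\permkkp$ is non-empty because membership in $\mmk$ forces $\vec{k}\sim\vec{k}'$. For a fixed $p\in\permkkp$, $\Dp(\mele,\melep)$ is a finite sum of three kinds of non-negative terms, each of which is finite: the $\dh$-terms (Euclidean distance on $\R$); the $\dtsz$-terms, each at most $\sum_{n\geq0}(n+1)b_n<\infty$ exactly as in the proof of Proposition \ref{prop_dts}; and the polygon terms, where $\nu(t_{\vec{u}}(\De)\symdiff t_{p(\vec{u})}(T^{-c}(\De')))\leq\nu(t_{\vec{u}}(\De))+\nu(t_{p(\vec{u})}(T^{-c}(\De')))<\infty$ by Proposition \ref{prop_admissible}, using that $\De$ has everywhere finite height by Definition \ref{def_delzsemi}(1), that $t_{\vec{u}}$ and $T^{-c}$ preserve the $x$-coordinate and hence preserve the property of having everywhere finite height, and that $t_{\vec{u}}(\De)$ is convex by Proposition \ref{prop_convex}. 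Hence $\D$ is a minimum over a non-empty finite set of finite non-negative reals, so $\D\geq0$ and $\D<\infty$; and $\D$ is independent of the chosen representatives of the $\Gmf\times\tk$-orbits by Proposition \ref{prop_tkvolume}.

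For symmetry: inversion $p\mapsto p^{-1}$ is a bijection of $\permkkp$ onto $\mathcal{S}^\mf_{\vec{k}',\vec{k}}$ which sends the constant $c$ (with $k_j=k_{p(j)}'+c$) to $-c$. One then checks term by term that $\Dp(\mele,\melep)$ equals the comparison with alignment $p^{-1}$ between $\melep$ and $\mele$: for the $\dh$- and $\dtsz$-parts this is symmetry of those metrics together with the reindexing $i=p(j)$, and for the polygon part one substitutes $\vec{u}=p^{-1}(\vec{w})$ in the sum over $\{0,1\}^\mf$, uses symmetry of $\symdiff$, and applies the $\nu$-preserving map $T^{c}$ to both arguments of each symmetric difference. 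This last move is legitimate because $t_{\vec{u}}$ and $T^{\pm c}$ each restrict to a translation on every vertical line, so they commute and (since an admissible measure has Radon--Nikodym derivative depending only on $x$) preserve $\nu$. Taking minima over $p$ and using $[\vec{k}]=[\vec{k}']$ gives $\D(\mele,\melep)=\D(\melep,\mele)$.

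For the identity of indiscernibles: with $p$ the identity and $c=0$ all summands vanish, so $\D(\mele,\mele)=0$. Conversely, if $\D(\mele,\melep)=0$, choose $p\in\permkkp$ attaining the minimum, so every summand of $\Dp(\mele,\melep)$ is zero; since $\dh$ and $\dtsz$ are metrics (Proposition \ref{prop_rxyz}) this gives $h_j=h_{p(j)}'$ and $(S_j)^\infty=(S_{p(j)}')^\infty$, and since $\nu$ lies in the Lebesgue measure class each relation $\nu(t_{\vec{u}}(\De)\symdiff t_{p(\vec{u})}(T^{-c}(\De')))=0$ forces $t_{\vec{u}}(\De)=t_{p(\vec{u})}(T^{-c}(\De'))$ (polygons whose symmetric difference is Lebesgue-null coincide). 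The case $\vec{u}=\vec{0}$ gives $\De=T^{-c}(\De')$; combining this with the remaining equalities, the ordering $\la_1<\cdots<\la_\mf$ of the distinguished vertical lines (which $\De=T^{-c}(\De')$ matches with those of $\De'$), and the description \eqref{eqn_ggaction} of the $\Gmf\times\tk$-action, one identifies $p$ with the relabeling carrying the data of $\dewp$ to that of $\dew$ and concludes $[\dew]=[\dewp]$ in $\dpolynok$; together with the matching of $h_j$ and $(S_j)^\infty$ this yields $\mele=\melep$ in $\mmk$. I expect this converse — recovering equality of the full lists of ingredients, and in particular equality of the labeled Delzant semitoric polygons (which are $\Gmf\times\tk$-orbits), from the vanishing of all the symmetric-difference terms rather than merely equality of representatives up to a cut-direction and twisting-shift choice — to be the main obstacle; the remaining axioms are routine given Propositions \ref{prop_admissible}, \ref{prop_tkvolume}, \ref{prop_rxyz} and Lemma \ref{lem_triangleineq_d}.
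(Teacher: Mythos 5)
Your proof follows essentially the same route as the paper: the triangle inequality is delegated to Lemma~\ref{lem_triangleineq_d}, and the remaining metric axioms — which the paper dispatches with the single sentence ``It is clear that $d$ is positive definite and it is symmetric because $\permkkp$ is closed under inverses'' — are spelled out in full, together with the finiteness and well-definedness checks (Propositions~\ref{prop_admissible} and~\ref{prop_tkvolume}) that the paper leaves to the reader. Two remarks: your formulation of symmetry via the bijection $p\mapsto p^{-1}$ from $\permkkp$ onto $\mathcal{S}^\mf_{\vec{k}',\vec{k}}$ is in fact more precise than the paper's ``closed under inverses,'' and the converse of positive definiteness, which you flag as the potential obstacle, does close cleanly: vanishing of the summand at each $\vec{u}$ with a single nonzero entry forces $\la_j=\la_{p(j)}'$ for every $j$, which combined with the strict orderings $\la_1<\cdots<\la_\mf$ and $\la_1'<\cdots<\la_\mf'$ forces $p=\mathrm{Id}$, after which $\dew$ and $\dewp$ are visibly in the same $\tk$-orbit and the equalities of $h_j$ and $(S_j)^\infty$ finish the identification.
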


\subsection{Relation to the metric on the moduli space of toric systems}\label{sec_toric}

In~\cite{PePRS2013} Pelayo-Pires-Ratiu-Sabatini construct a metric on the moduli space of 
(compact) toric integrable systems which we denote by $\toric$.  
Recall there is a one-to-one correspondence between elements of $\toric$ and 
Delzant polytopes.  The authors
of~\cite{PePRS2013} define a metric on $\toric$ by pulling back the natural 
metric on the space of Delzant polytopes given by the Lebesgue measure of 
the symmetric difference.

Toric integrable systems can also be viewed as compact semitoric systems 
with no focus-focus singularities.  If $\mf=0$ then $\Gmf\times\tk=\varnothing$
and thus the affine invariant is a unique polygon, the Delzant polytope.
To compare two such systems the semitoric metric defined in the present paper 
takes the $\nu$-measure of the symmetric difference of the polygons for some 
admissible measure $\nu$, as opposed to using the standard Lebesgue measure 
on $\R^2$ as is done in~\cite{PePRS2013}. Notice also that $\toric$ is not 
equal to $\semitoricz$ because, for instance, there are elements of
$\semitoricz$ which are not compact.

Moreover it is possible for two toric systems to be isomorphic
as semitoric systems but not isomorphic as toric systems.  
This is because if $(M,\om, (J,H))$ and $(M',\om', (J',H'))$
are two choices of 4 dimensional toric systems then a 
diffeomorphism $\phi:M\to M'$ is an isomorphism of toric
systems if $\phi^* (J',H') = (J, H)$.  This corresponds to
taking $f$ to be the identity in the definition of semitoric
isomorphisms.  Thus we see that
if $\sim$ represents the equivalence induced by semitoric isomorphisms 
we have that $\toric / {\sim} \subset \semitoricz$ so the metric on 
$\toric$ produces a topology on a subset of $\semitoricz$ via the quotient topology.

In $\semitoricz$ the semitoric invariant is a unique polygon so to conclude that 
the metrics produce the same topology it is sufficient to show that the same 
sequences of convex compact polygons converge with respect to both the Lebesgue
measure and any admissible measure.

\begin{lemma}\label{lem_compactcase}
 Let $\De_k,\De\subset\R^2$ be convex compact sets for each $k\in\N$,
 let $\mu$ denote the Lebesgue measure on $\R^2$, and let $\nu$ be 
 any admissible measure.  Then  
 $\lim_{k\to\infty}\mu (\De\symdiff\De_k)=0$ if and only 
 if $\lim_{k\to\infty} \nu(\De\symdiff\De_k)=0$.
\end{lemma}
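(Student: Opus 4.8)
The plan is to reduce the whole statement to the elementary fact that, on any bounded region, an admissible measure and Lebesgue measure are uniformly comparable, after first showing that the sets $\De_k$ lie eventually inside one fixed bounded region.

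First I would record the comparability. Since $\De$ is compact, fix $R>0$ with $\De\subseteq Q:=[-R,R]\times[-R,R]$, and suppose for the moment that $\De_k\subseteq Q$ for all large $k$. Writing $g=\mathrm{d}\nu/\mathrm{d}\mu$, part (3) of Definition \ref{def_adm} supplies constants $0<c\le C<\infty$ with $c\le g(x)\le C$ for all $x\in[-R,R]$, whence $c\,\mu(E)\le\nu(E)\le C\,\mu(E)$ for every measurable $E\subseteq Q$. Applying this with $E=\De_k\symdiff\De\subseteq Q$ gives at once $\mu(\De_k\symdiff\De)\to0\iff\nu(\De_k\symdiff\De)\to0$. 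So the entire content of the lemma is the claim that convergence in either sense forces the $\De_k$ to lie eventually in a fixed box.

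The key step, and the one I expect to be the main obstacle, is exactly this boundedness claim; this is where convexity enters. We may assume $\De$ has nonempty interior (as is the case in the intended application, where the relevant polygons are genuinely two-dimensional Delzant polytopes; a degenerate $\De$ would be treated separately), so that $\mu(\De)>0$ and $\nu(\De)>0$. From $\mu(\De_k\symdiff\De)\to0$ (respectively $\nu(\De_k\symdiff\De)\to0$) one gets $\mu(\De_k\cap\De)\to\mu(\De)$ (resp.\ the same for $\nu$), so for $k$ large the set $C_k:=\De_k\cap\De$ is convex, has $\mu$-measure at least $\tfrac12\mu(\De)>0$, and has diameter at most $\mathrm{diam}(\De)$. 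An elementary estimate — the inradius of a convex body is bounded below by a fixed multiple of its minimal width, and the minimal width of $C_k$ is bounded below in terms of $\mu(C_k)$ and $\mathrm{diam}(\De)$ — then shows that $C_k$ contains a ball $B(\rho_0,q_k)$ with radius $\rho_0>0$ independent of $k$ and centre $q_k\in\De$. If $\De_k$ contained a point $p$ with $\abs{p}$ arbitrarily large, then by convexity $\De_k$ would contain the cone $\mathrm{conv}(B(\rho_0,q_k)\cup\{p\})$, whose $\mu$-measure grows linearly in $\abs{p}$; in the Lebesgue case this contradicts $\mu(\De_k)\to\mu(\De)<\infty$, and in the admissible case the portion of this cone lying outside $\De$ — which is contained in $\De_k\symdiff\De$ — has $\nu$-measure bounded below by a fixed positive constant, since $g$ is everywhere positive, again contradicting the assumed convergence. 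This establishes the boundedness, and combining it with the comparability of the previous paragraph finishes the proof.
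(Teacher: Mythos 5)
Your proposal is correct, but it takes a genuinely different route from the paper, so it is worth comparing the two.

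The paper's proof is asymmetric. For $\mu\to0\implies\nu\to0$ it observes $\nu_0\leq\mu$ for the explicit admissible measure $\nu_0$, so $\mu$-convergence forces $\nu_0$-convergence, and then Lemma~\ref{lem_admmsrseq} transfers this to an arbitrary admissible $\nu$. For $\nu\to0\implies\mu\to0$ it invokes Lemma~\ref{lem_convexinterval} to get a fixed vertical segment $A\subset\De\cap\De_k$ for large $k$, then shows that a point of $\De_k$ with $|\pi_1(p)|>L+1$ would, by convexity, force a triangle of a definite $\nu$-measure into $\De\symdiff\De_k$; this confines $\De_k$ eventually to the \emph{vertical strip} $\pi_1^{-1}([-L,L])$, on which $\mathrm{d}\mu/\mathrm{d}\nu$ is bounded. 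Notice that the paper never confines $\De_k$ vertically at all — it does not need to, precisely because admissible measures have a Radon--Nikodym derivative depending only on $x$, so comparability of $\mu$ and $\nu$ holds on a vertical strip, not just on a box. Your proof aims for the stronger conclusion that $\De_k$ eventually sits in a fixed box, proves confinement in both coordinates, and derives both implications symmetrically from comparability on that box. That buys you a cleaner, two-sided statement ("the whole lemma reduces to box confinement"), but it commits you to a more delicate argument than the paper needs. Your use of a ball from the inradius bound (rather than the paper's vertical segment from Lemma~\ref{lem_convexinterval}) is a perfectly serviceable substitute.

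Two points deserve tightening. First, the justification for the $\nu$-measure lower bound on the escaping cone, "since $g$ is everywhere positive," is too weak as stated: $g>0$ pointwise does not by itself give a uniform positive lower bound. What you actually need, and what admissibility gives you, is that $g$ is bounded away from zero on compact intervals; you then have to verify that the portion of the cone lying just outside $\De$ has $\mu$-measure bounded below uniformly in $p$ and $q_k$ \emph{and} has $x$-coordinates confined to a fixed compact interval. The latter holds because $q_k\in\De\subseteq Q$, but the argument should address all escape directions (in particular, diagonal escape) and not just the coordinate directions. Second, your assumption that $\De$ has nonempty interior is not a convenience but a necessity: if $\De$ is, say, a single point and $\De_k=[k,k+1]\times[0,1]$, then $\nu(\De\symdiff\De_k)\to0$ while $\mu(\De\symdiff\De_k)=1$, so the claimed equivalence fails for degenerate $\De$. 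The paper's proof also relies on this, implicitly, via Lemma~\ref{lem_convexinterval}, whose proof starts by choosing $N$ with $\nu(\De\cap\pi_1^{-1}([-N,N]))>0$. You were right to flag the assumption explicitly.
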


\begin{proof}
 If $\lim_{k\to\infty}\mu (\De\symdiff\De_k)=0$ we can see that 
 $\lim_{k\to\infty}\nu_0 (\De\symdiff\De_k)=0$ where $\nu_0$ is 
 the example of an admissible measure from Section~\ref{subsec_poly}.
 This is because $\nu_0(A)<\mu(A)$ for any set $A\subset\R^2$.  
 Thus we conclude that $\lim_{k\to\infty}\nu (\De\symdiff\De_k)=0$ by
 Lemma~\ref{lem_admmsrseq}.
 
 Now we will show the other direction. Suppose 
 $\lim_{k\to\infty}\nu (\De\symdiff\De_k)=0$ and fix $\varepsilon>0$. 
 Choose some $L>0$ such that $\pi_1(\De)\subset [-L,L]$. By 
 Lemma~\ref{lem_convexinterval} we know there exists $x_0,y_0,y_1\in\R$ with 
 $y_0<y_1$ and $x_0\in[-L,L]$ such that the set 
 $\{x_0\}\times[y_0,y_1]\subset\De$ is a subset of $\De_k$ for 
 $k>K_1$ for some fixed $K_1\in\N$. Now, suppose that $k>K_1$ and $p\in\De_k$ 
 has $\pi_1(p)>L+1$.  Then, since $\De_k$ is convex, the triangle with vertices
 $(x_0,y_0), (x_0,y_1), p$, which we will denote by $G_p$, must be a subset of
 $\De_k$.  Since $\pi_1(\De)\subset[-L,L]$ we know that
 $G_p\setminus \pi_1^{-1}([-L,L])\subset\De\symdiff\De_k$ and the $\nu$-measure 
 of any such triangle $G_p$ defined by a point $p\in\R^2$ with $\pi_1(p)>L$ is 
 bounded below by a constant $c_1=\nu(G_{p_0})>0$ where $p_0=(L+1,0)$. This is
 because any triangle $G_p$ where $\pi_1(p)>L$ contains a triangle $G_{(L+1,y)}$
 for some $y\in\R$ and any such triangle is the image under a vertical, and thus
 $\nu$-preserving, transformation of $G_{p_0}$. Similarly, $p\in\De_k$ for $k>K_1$
 with $\pi_1(p)<-L$ would imply that $\nu(\De\symdiff\De_k)>c_2$ for some constant
 $c_2>0$.  Thus, since $\lim_{k\to\infty}\nu (\De\symdiff\De_k)=0$ we conclude that
 there exists some $K_2>K_1$ such that $k>K_2$ implies that 
 $\De_k \subset \pi_1^{-1} ([-L,L])$. Since $\nu$ is admissible we know that there 
 exists some $c_3>0$ such that $\nicefrac{\text{d}\mu}{\text{d}\nu}<c_3$ on 
 $\pi_1^{-1} ([-L,L])$.  Choose $K_3>K_2$ such that $k>K_3$ implies that 
 $\nu (\De\symdiff\De_k)<\nicefrac{\varepsilon}{c_3}$ and notice that
 \[
  \mu(\De\symdiff\De_k) = \frac{\text{d}\mu}{\text{d}\nu} \nu(\De\symdiff\De_k) < c_3 \nu(\De\symdiff\De_k)<\varepsilon,
 \]
because while the Radon-Nikodym derivative is not bounded on all of 
$\R^2$ it is bounded on the set $\De\symdiff\De_k$ for large enough $k$.
\end{proof}

\begin{cor}\label{cor_toric}
 The metric $\Dplain$ induces the same topology on $\toric$ as 
 the metric defined in~\cite{PePRS2013} does.
\end{cor}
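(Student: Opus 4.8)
The plan is to read off Corollary~\ref{cor_toric} directly from Lemma~\ref{lem_compactcase}, using the elementary fact that a metric topology is determined by its convergent sequences together with their limits (metric spaces being first countable, hence sequential); thus two metrics on a set induce the same topology precisely when they have the same convergent sequences.

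First I would pin down what the two metrics actually are on the relevant space. Under the bijection of Theorem~\ref{thm_class} with $\mf=0$ one has $\semitoricz\cong\dpolynokz$: at complexity $0$ there are no cut directions, twisting indices, volume parameters, or Taylor series, and the group $\Gmfz\times\tk$ contributes nothing, so a point of $\semitoricz$ is simply a single convex compact polygon. Unwinding Definitions~\ref{def_distpolyp}, \ref{def_metric} and \ref{def_fullmetric}, the metric $\Dplain$ — transported to $\toric$ via Theorem~\ref{thm_class} and the inclusion $\toric / \sim \subset \semitoricz$ recorded in the discussion above — is the function $(\De,\De')\mapsto\nu(\De\symdiff\De')$ for the chosen admissible measure $\nu$, whereas the metric of~\cite{PePRS2013} is $(\De,\De')\mapsto\mu(\De\symdiff\De')$. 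Both are symmetric-difference functions of polygons lying in a common measure class, so each is positive on distinct Delzant polytopes and both descend to honest functions on isomorphism classes of toric systems.

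Next, since Delzant polytopes are convex and compact, by the reduction above it suffices to check that a sequence $\De_k$ of convex compact sets satisfies $\mu(\De_k\symdiff\De)\to0$ if and only if $\nu(\De_k\symdiff\De)\to0$; but this is exactly Lemma~\ref{lem_compactcase}. Hence the \cite{PePRS2013}-metric and $\Dplain$ have the same convergent sequences on $\toric$, and therefore induce the same topology, which is the assertion of the Corollary.

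I do not anticipate a genuine obstacle: the substance — that an admissible symmetric-difference metric and the Lebesgue one detect the same convergence of convex compact polygons — has already been handled in Lemmas~\ref{lem_convexinterval} and~\ref{lem_compactcase}, whose proofs control how far a convergent sequence of convex sets can protrude outside a fixed vertical strip by fitting suitable triangles into the symmetric difference. The only point still requiring a moment's care is bookkeeping: confirming that for $\mf=0$ the semitoric polygon invariant of a toric system really is (a representative of) its Delzant polytope, so that the two descriptions agree as subsets of $\R^2$ with no extra normalization intervening, and observing that, because the natural map $\toric\to\semitoricz$ need not be injective, the statement should be read at the level of induced topologies (equivalently, on $\toric/\sim$) — which changes nothing, since the proof only ever compares convergence of sequences.
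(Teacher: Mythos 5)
Your proposal is correct and follows essentially the same route as the paper: the corollary is read off directly from Lemma~\ref{lem_compactcase} after observing that for $\mf=0$ the affine invariant is a single compact polygon and both metrics reduce to symmetric-difference measures on compact convex sets. Your explicit remark that metric topologies are determined by convergent sequences, and your careful treatment of the non-injective map $\toric\to\semitoricz$, make explicit two points the paper leaves implicit, but the substance is identical.
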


 Corollary~\ref{cor_toric} follows from Lemma~\ref{lem_compactcase}.
 This result is concerning compact polygons.
 Of course, if we consider non-compact sets these 
 metrics will not induce the same topology.

\begin{remark}\label{rmk_cpt}
Let $\semitoriccpt\subset\semitoric$ be the collection of \emph{compact} 
semitoric integrable systems.  Then the polygons produced will always be
compact and thus Lemma \ref{lem_compactcase} applies.  So we can conclude
that when restricting to $\semitoriccpt$ the standard Lebesgue measure can
be used in place of the choice of admissible measure and the same topology
will be produced.
\end{remark}

\subsection{\texorpdfstring{$d$ and $d^{\mathrm{Id}}$}{The metrics} induce the same topology}\label{sec_topology} 

Let 
\[
 \mmknoperm=\{\mele\in\mmk\mid\mele\text{ is in twisting index class }\vec{k}\}
\]
and define $\Didplain = \Didnomk$ on $\m$ by 
\[
 \Didplain(\mele, \melep) = \left\{\begin{array}{ll}\Did(\mele,\melep)&\text{ if }\mele,\melep\in\mmknoperm\text{ for some }\mf\in\Z_{\geq0},\vec{k}\in\Z^\mf\\\infty&\text{ otherwise}.\end{array}\right.
\]
The function $\Didplain$ compares the focus-focus points of systems in
the order of increasing $J$, so systems with twisting indexes which
can be compared only by reordering (i.e.~those which are in the same generalized twisting index
class but not in the same twisting index class) will be assigned a value
of $\infty$ by $\Didplain$.
Both $\Dplain$ and $\Didplain$ are defined on $\m$ and the main result of 
this section will be that both of these metrics induce the same topology
on $\m$.

\newcommand{\meler}{\mele_r}
\newcommand{\Dpnplain}{d^{p_n}}
\newcommand{\Dprplain}{d^{p_r}}
\begin{lemma}\label{lem_laconvg}
 Let $\mele,\meler\in\m$ for $r\in\N$.  
 Then $\D(\mele, \meler) \stackrel{r\to\infty}{\longrightarrow} 0$
 implies that $\la_j^r \stackrel{r\to\infty}{\longrightarrow} \la_j$
 for all $j=1,\ldots,\mf$.
\end{lemma}

\begin{proof}
Again we use $\Dplain$ to denote $\D$ and $\Dpplain$ to denote $\Dp$.

 {\it Step 1}: Let $p_r\in\perm$ satisfy 
 $\Dplain(\mele, \meler) = \Dprplain (\mele,\meler)$ for each $r\in\N$. 
 For the first step of this proof we will argue that 
 $\la_{p_r(j)}^r \stackrel{r\to\infty}{\longrightarrow} \la_j$ by contrapositive.
 Suppose there exists some $j\in1,\ldots,\mf$ such that $\la_{p_r(j)}^r \not\to \la_j$
 as $r\to\infty$.
 This means there exists $a>0$ and a subsequence $(r_i)_{i=0}^\infty$
 such that 
 \[
  \abs{\la_{p_{r_i}(j)}^{r_i}-\la_j} > a \text{ for all } i\in\N.
 \]
 Now let $t_j = t^1_{\ell_{\la_j}}$ and $t^r_j = t^1_{\ell_{\la^r_{p_r(j)}}}$.  
 Let $\De$ be a polygon which represents a choice of 
 $\vec{\varepsilon}=\{+1,\ldots,+1\}$ for $\mele$. We must show that 
 $\nu ( t_j (\De) \symdiff t_j^{r_i} (\De))$ is bounded away from zero.  
 We may assume that $a$ is less than the horizontal distance from $\la_j$ to 
 the edge of the polygon $\De$ because 
 $ \la_j \in \mathrm{int}(\pi_1(\De))$.
 Let $b=\min_{x\in[\la_j -a, \la_j+a]} \{\,\text{length}(\De \cap \ell_x)\,\}$
 and notice that since $\De$ is a convex polygon we must have that $b>0$.
 
 The set $\De$ may be shifted by a vertical transformation so that 
 $\max \{\pi_2 (\De \cap \ell_x)\}=0$ for each $x\in\R$ to form a 
 new set $\De'\subset\R^2$, as is shown in Figure~\ref{fig_laconvg1}.
 Let $A\colon\R^2\to\R^2$ be the composition
 of these transformations so $A(\De) = \De'$. This new set may not
 be convex but since $\nu$ is invariant under vertical 
 translations we have that $\nu(\De) = \nu(\De')$.  Notice that 
 $\mathcal{B}=[\la_j-a,\la_j+a]\times[-b,0]$ satisfies 
 $\mathcal{B}\subset \De'$.
 
 \begin{figure}
 \centering
 \includegraphics[height=100pt]{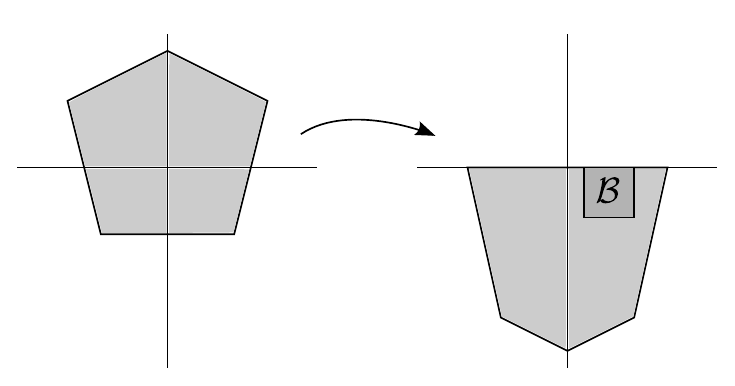}
 \caption{Without changing the $\nu$-measure we can produce a new polygon 
 which has $\{y=0\}$ as its top boundary.}
 \label{fig_laconvg1}
\end{figure}
 
 Now there are two cases, both shown in Figure~\ref{fig_laconvg2}.
 If $\la_j < \la_j^{r_i}$ then 
 $t_j (\mathcal{B})\cap \{y>0\} \subset t_j (\De') \symdiff t_j^{r_i} (\De')$.
 This is because $t_j^{r_i}$ is the identity on points where $x\in[\la_j-a, \la_j+a]$
 and so for $x$ in this interval $\De'$ does not intersect the open upper half plane.
 The set $t_j (\mathcal{B})\cap \{y>0\}$ always contains the rectangle 
 $[\la_j + \nicefrac{a}{2}, \la_j+a]\times [0,\nicefrac{a}{2}]$, 
 as in Figure~\ref{fig_laconvg2}. 
 Let $c_1 = \nu([\la_j + \nicefrac{a}{2}, \la_j+a]\times [0,\nicefrac{a}{2}])$.
 
\begin{figure}
 \centering
 \includegraphics[height=170pt]{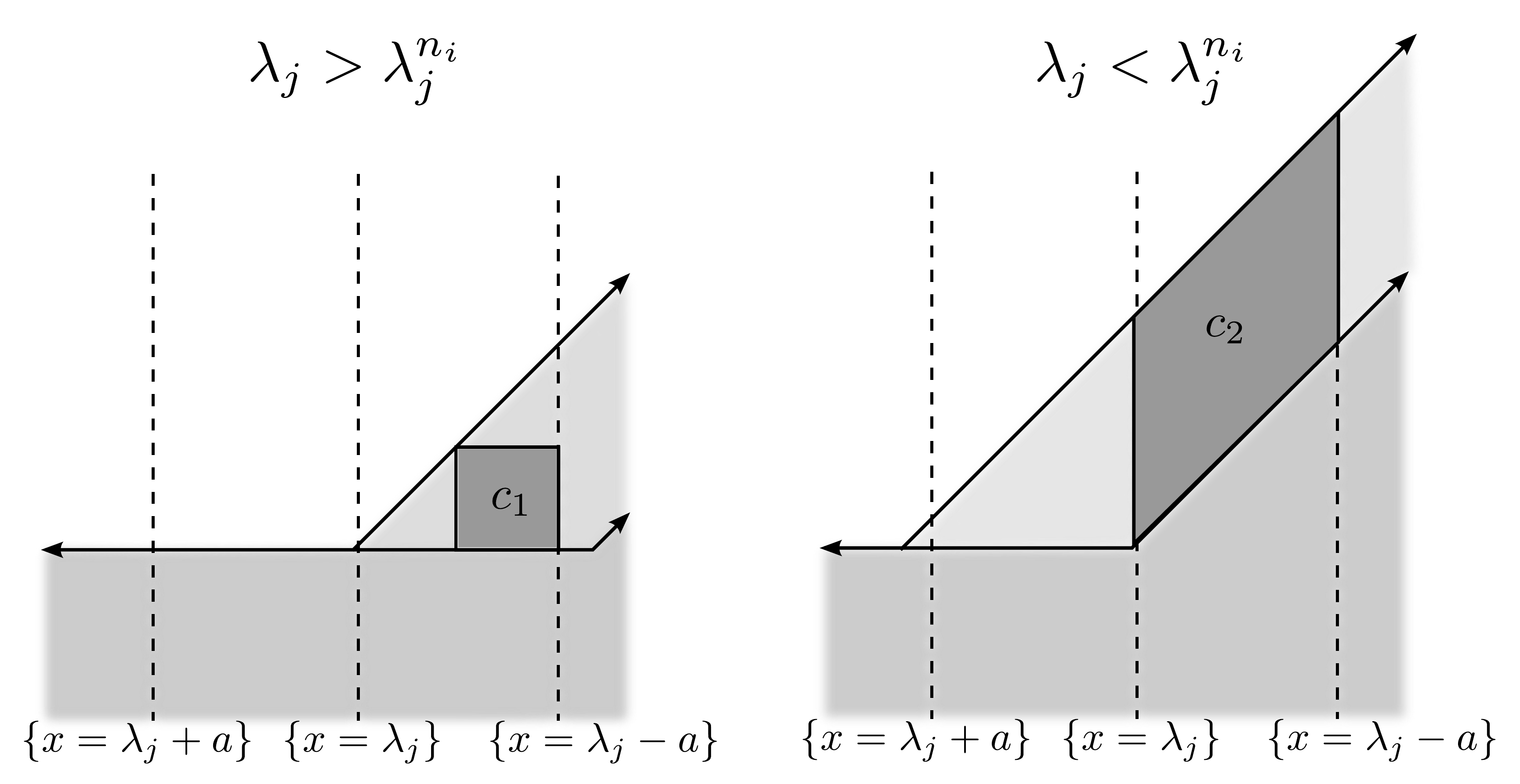}
 \caption{Either $\la_j^{r_i}< \la_j-a$ or $\la_j^{r_i}> \la_j+a$.  
          Each case is shown above and in either case there is some 
          positive measure region which is always in the symmetric 
          difference.  This causes convergence to be impossible.}
 \label{fig_laconvg2}
\end{figure}
 
 Now suppose that $\la_j>\la_j^{r_i}$.  In this case the symmetric 
 difference always contains the region $t_j ([\la_j,\la_j+a]\times[a-b,a])$
 which has the same measure as $[\la_j,\la_j+a]\times[a-b,a]$; see 
 Figure~\ref{fig_laconvg2}. Let $c_2 = \nu([\la_j,\la_j+a]\times[a-b,a])$
 and let $c=\min\{c_1,c_2\}$.  So in any case we have that 
 $\nu(t_j (\De) \symdiff t_j^{n_i}(\De) \geq c > 0$.
 
 Assume that $\lim_{r\to\infty} \Dplain (\mele, \meler)=0$. 
 This implies $\lim_{i\to\infty} \nu(\De \symdiff \De^{r_i})= 0$.
 In this case fix $\varepsilon>0$ such that $\varepsilon<c$, and 
 find $I>0$ such that $i>I$ implies that 
 $\nu(\De \symdiff \De^{r_i})<\varepsilon$.  Then for $i>I$ we have that
 \begin{align*}
  \nu[t_j (\De) \symdiff t_j^{r_i} (\De)] &\leq \nu[t_j(\De) \symdiff t_j^{r_i} (\De^{r_i})]+\nu[t_j^{r_i} (\De^{r_i}) \symdiff t_j^{r_i}(\De)]\\
                                              &= \nu[t_j(\De) \symdiff t_j^{r_i} (\De^{n_i})]+\nu[\De^{r_i} \symdiff \De],
 \end{align*}
which implies
\[
 \nu[t_j(\De) \symdiff t_j^{r_i} (\De^{r_i})] \geq \nu[t_j (\De) \symdiff t_j^{r_i} (\De)] - \nu[\De^{r_i} \symdiff \De] > c-\varepsilon.
\]
Thus $\lim_{r\to\infty}\nu[t_j(\De) \symdiff t_j^{r}(\De^{r})]= 0$ 
is impossible, but this is a term in $\Dplain (\mele, \meler)$ so
$\Dplain (\mele, \meler) \to 0$ is impossible as well. 
We conclude that $\la_{p_r(j)}^r \to \la_j$ for all $j=1,\ldots, \mf$.
 
 {\it Step 2}: From Step 1 we know that 
 $\la_{p_r(j)}^r\stackrel{r\to\infty}{\longrightarrow}\la_j$ for each
 $j=1,\ldots,\mf$. Let 
 $D = \min \{\abs{\la_j - \la_{j'}}\mid j,j'\in\{1,\ldots,\mf\}, j\neq j'\}$.
 Then there exists some $R>0$ such that $r>R$ implies that 
 $\abs{\la^r_{p_r(j)} - \la_j} < \nicefrac{d}{2}$.  Thus, for $r>R$ we have 
 that $p_r = \text{Id}$ and the result follows.
\end{proof}

\begin{prop}\label{prop_sametop}
 Let $\mf\in\Z_{\geq0}$, $\vec{k}\in\Z^\mf$, $\bn$ be a linear summable 
 sequence, and $\nu$ be an admissible measure. Then $\D$ and $\Did$ 
 induce the same topology on $\m$.
\end{prop}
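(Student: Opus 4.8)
The plan is to show that the two metrics $\Dplain$ and $\Didplain$ on $\m$ (understood, as at the opening of Section~\ref{sec_topology} and in Definition~\ref{def_fullmetric}, as functions on all of $\m$) have exactly the same convergent sequences with the same limits; since a metric topology is determined by its convergent sequences, this gives the proposition. One inclusion is essentially free: if $\Didplain(\mele,\melep)<1$ then by construction $\mele$ and $\melep$ lie in a common $\mmknoperm$, hence in a common $\mmk$, and their twisting indices differ only by a common integer, so $\mathrm{Id}\in\mathcal{S}^\mf_{\vec{k},\vec{k}'}$; consequently $\Dplain(\mele,\melep)=\min_{p}\Dpplain(\mele,\melep)\leq\Didplain(\mele,\melep)$, while the bound is trivial when $\Didplain(\mele,\melep)=1$. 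Thus $\Dplain\leq\Didplain$ pointwise on $\m$, so every $\Didplain$\--convergent sequence is $\Dplain$\--convergent to the same limit, and the $\Didplain$\--topology is at least as fine as the $\Dplain$\--topology.

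For the reverse direction I would take a sequence $\melen\to\mele$ in $\Dplain$ and prove $\melen\to\mele$ in $\Didplain$. Since $\Dplain(\melen,\mele)<1$ for $n$ large, all but finitely many $\melen$ have the same number of focus-focus points and the same equivalence class of twisting index as $\mele$, hence lie in the single $\mmk$ containing $\mele$; write $\vec{k}$ for the twisting index of $\mele$ and $\vec{k}^n\sim\vec{k}$ for that of $\melen$. Let $p_n\in\mathcal{S}^\mf_{\vec{k},\vec{k}^n}$ realize $\Dplain(\mele,\melen)=\Dpnplain(\mele,\melen)$. Here I would invoke the proof of Lemma~\ref{lem_laconvg}: Step~1 of that proof shows $\la^n_{p_n(j)}\to\la_j$ for each $j$, and Step~2 then forces $p_n=\mathrm{Id}$ for all sufficiently large $n$. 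For such $n$, $\mathrm{Id}\in\mathcal{S}^\mf_{\vec{k},\vec{k}^n}$ means $\vec{k}^n=\vec{k}-c_n\mathbf{1}$ for some $c_n\in\Z$, so $\melen$ has the same twisting index as $\mele$ up to a common shift and therefore $\melen\in\mmknoperm$; moreover $\Dpnplain(\mele,\melen)$ is then exactly $\Didplain(\mele,\melen)$. Hence for large $n$ we have $\Didplain(\melen,\mele)=\Dplain(\melen,\mele)\to0$, so $\melen\to\mele$ in $\Didplain$.

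Putting the two halves together, $\Dplain$ and $\Didplain$ have the same convergent sequences with the same limits, hence (metric spaces being first countable) the same closed sets, and therefore the same topology on $\m$. Equivalently, the second half of the argument shows that each $\mmknoperm$ is clopen in $(\m,\Dplain)$, refining the clopen partition $\m=\bigcup_{\mf,[\vec{k}]}\mmk$ used to define $\Dplain$ into the finer partition $\m=\bigcup_{\mf,\vec{k}}\mmknoperm$, on whose pieces $\Dplain$ and $\Didplain$ coincide near the diagonal.

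I expect the difficulty here to be organizational rather than analytic: the genuine estimates all live in Lemma~\ref{lem_laconvg}, which we may use, and the delicate part is the bookkeeping around twisting indices --- that a list of ingredients records only an equivalence class of labeled weighted polygon, so its twisting index is well defined only up to addition of a common integer; that $\mmknoperm$ is accordingly invariant under such shifts, which is exactly what makes ``$p_n=\mathrm{Id}$ eventually'' place the tail of a $\Dplain$\--convergent sequence inside a single $\mmknoperm$; and that on that tail $\Dplain$ equals $\Didplain$ rather than merely being dominated by it, which is what upgrades the one-sided inequality $\Dplain\leq\Didplain$ to an equivalence of topologies.
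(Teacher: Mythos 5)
Your argument is essentially the paper's: one direction follows from the pointwise inequality $\Dplain\leq\Didplain$, the other from Steps 1--2 of the proof of Lemma~\ref{lem_laconvg}, which force the minimizing permutation to be the identity eventually, whence $\Dplain=\Didplain$ along the tail of the sequence. The additional bookkeeping you spell out about twisting-index classes and the clopen partition into the pieces $\mmknoperm$ is a correct and helpful unpacking of what the paper leaves implicit.
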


\begin{proof}
 Any sequence which converges for $\Didplain$ will converge 
 for $\Dplain$ because $\Dplain<\Didplain$.
 Suppose that $(\melen)_{n=1}^\infty$ is a sequence in $\m$ which 
 converges to $\mele\in\m$ with respect to $\Dplain$.  Then by 
 Step~2 of the proof of Lemma~\ref{lem_laconvg} we know there 
 exists some $N>0$ such that for $n>N$ we have that 
 $\Dplain(\mele,\melen)=\Didplain(\st,\st_n)$.
 Thus, we see that the sequence $\Didplain(\mele,\melen)$ is eventually 
 equal to a sequence which converges to zero, so we conclude that 
 $\Didplain(\mele,\melen)\stackrel{n\to\infty}{\longrightarrow}0$.
\end{proof}

\begin{remark}\label{rmk_components}
 Even though they can be compared by the metric, 
 each $\mmknoperm\subset\m$ is in a separate component (in terms of connectedness)
 of $(\m,\Dplain)$.  
 This is because these are defined to be in different components for $\Didplain$
 and we have just shown that $\Didplain$ and $\Dplain$ induce the same topology.
\end{remark}

\section{The completion}\label{sec_compl}

In this section we compute the completion of the space of semitoric ingredients $\m$ 
which corresponds to the completion of $\semitoric$ by Theorem \ref{thm_class}.  
We will show that the completion of $\m$ is $\mcompl$, where $\mcompl$ is as is described
in Definition~\ref{def_mcomplmk} and Definition~\ref{def_polycompl}.
The completion of an open interval in $\R$ with the usual metric is the corresponding 
closed interval and we have already stated that $\rxyz$ is complete 
(Proposition~\ref{prop_rxyz}),  so to produce the completion of $\m$ it seems the 
only difficultly will be with the weighted polygons.  This is not the case since 
in fact defining the distance as a minimum of permutations has intertwined the metrics
on these different spaces so we cannot consider them separately.  This section has 
similar arguments to those in~\cite{PePRS2013} except that in our case we must consider
a whole family of polygons all at once instead of only one polygon.  For the remainder 
of this section fix some admissible measure $\nu$, some linear summable sequence $\bn$,
a nonnegative integer $\mf$, and a vector $\vec{k}\in\Z^\mf$.  For simplicity we will 
use $\Dplain$ and $\Dpplain$ to refer to $\D$ and $\Dp$ (from Definition~\ref{def_metric})
respectively, where $p\in\perm$.

In Section~\ref{sec_buildcompl} we show that the completion must contain 
$\mcompl$ and in the remaining subsections we show that $\mcompl$ is 
complete. In Section~\ref{sec_cauchy} we prove several Lemmas about 
Cauchy sequences which are used in Section~\ref{sec_complcompl} to 
conclude that $\mcompl$ is in fact the completion of $\m$.

With the metric presented in the current paper, there is no way for elements of $\m$ with different numbers of focus-focus
points or which are in different generalized twisting index classes to be close to one another because the 
distance between any two such systems is always $\infty$ (see Definition~\ref{def_fullmetric}).
Thus, we will work with the components $\mmk$ of $\m$.

First, notice that the definition of $\Dplain$ from Definition~\ref{def_metric} 
holds on $\mcompl$ as well.  That is, extend the definition of $\Dplain$ in the 
following way:

\begin{definition} 
  Suppose that \[\mele=\stfulla,\,\,\,\melep=\stfullap\in\mcompl.\]
 Then:
 \begin{enumerate}
  \item the \emph{comparison with alignment} $p$ is 
        \[
         \Dpplain (\mele,\melep)= \distpolyp ([\Aw],[\Awp]) + \sum_{j=1}^\mf \Big( \big|h_j - h_{p(j)}'\big|+ \dtsz((S_j)^\infty, (S_{p(j)})^\infty)\Big);
        \]
  \item the \emph{the distance between $\mele$ and $\melep$} is 
        \[
         \Dplain (\mele, \melep) = \min_{p\in \permkkp} \left\{\Dpplain(\mele,\melep) \right\}. 
        \]
 \end{enumerate}
\end{definition}

\begin{prop}
 $\Dplain$ is a metric on $\mcompl$.
\end{prop}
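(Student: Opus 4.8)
The plan is to reprise the argument of Section~\ref{sec_themetric} almost verbatim, checking that each ingredient survives the passage from $\mmk$ to $\mcomplmk$. By the convention of Definition~\ref{def_fullmetric} (extended to $\mcompl$) two elements lying in distinct pieces $\mcomplmk$ are at distance $1$, so it suffices to fix $\mf$ and $\vec{k}$ and verify that $\Dplain$ restricts to a metric on $\mcomplmk = \polycompl\times[0,1]^\mf\times\rxyz^\mf$. Nonnegativity and finiteness are immediate, since $\Dplain$ is a finite sum of nonnegative terms: finiteness of the polygon term $\distpolyp$ on $\polycompl$ is built into the construction of $\polycompl$ in Definition~\ref{def_polycompl}, and finiteness of the $\dtsz$ and $\dh$ terms is automatic.

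For symmetry one records the identity $\Dpplain(\mele,\melep) = d^{p^{-1}}(\melep,\mele)$. For the Taylor-series and volume summands this is just symmetry of $\dtsz$ and $\dh$ together with the bijection $j\mapsto p(j)$; for the polygon summand it follows from the computation in Proposition~\ref{prop_tkvolume}, after the reindexing $\vec{u}\mapsto p(\vec{u})$ and an application of the measure-preserving map $T^{-c}$, using that admissible measures are invariant under vertical transformations. Since the sets of appropriate permutations are interchanged under $p\mapsto p^{-1}$, taking the minimum over $p$ gives $\D(\mele,\melep) = \D(\melep,\mele)$. For the equivalence $\D(\mele,\melep) = 0 \iff \mele = \melep$, the direction ``$\Leftarrow$'' uses $p = \mathrm{Id}$, for which every summand vanishes. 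Conversely, if $\D(\mele,\melep) = 0$ then $\Dpplain(\mele,\melep) = 0$ for some $p$, which forces $(S_j)^\infty = (S'_{p(j)})^\infty$ and $h_j = h'_{p(j)}$ for all $j$ (because $\dtsz$ and $\dh$ are metrics, the former by Proposition~\ref{prop_rxyz}), and $\nu(t_{\vec{u}}(\Aw)\symdiff t_{p(\vec{u})}(T^{-c}(\Awp))) = 0$ for every $\vec{u}\in\{0,1\}^\mf$; taking $\vec{u} = 0$ and using that $\nu$ lies in the Lebesgue measure class gives $\mu(\Aw\symdiff\Awp) = 0$, hence $\Aw = \Awp$ in $\polycompl$, after which the ordering convention of Remark~\ref{rmk_order} forces $p = \mathrm{Id}$ and $\mele = \melep$ (exactly as in the non-complete case, one checks that a permutation of critical points compatible with this ordering must be trivial).

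The triangle inequality is obtained exactly as in Lemmas~\ref{lem_permkkpp}, \ref{lem_dpdqdpq}, and~\ref{lem_triangleineq_d}. Lemma~\ref{lem_permkkpp} is purely combinatorial---it concerns only the sets $\permkkp$, $\permkkpp$, $\permkppkp$, which depend on $\vec{k},\vec{k}',\vec{k}''$ and not on the ingredients---so it applies unchanged. The analogue of Lemma~\ref{lem_dpdqdpq} is the termwise triangle inequality: for the polygon term one inserts $t_{q(\vec{u})}(T^{-c_1}(A''))$ and invokes $\nu(X\symdiff Z)\le\nu(X\symdiff Y)+\nu(Y\symdiff Z)$, valid for measurable sets and hence for elements of $\polycompl$; for the remaining terms one uses the triangle inequalities of $\dtsz$ and $\dh$. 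Then the proof of Lemma~\ref{lem_triangleineq_d} goes through word for word: choose $q$ realizing $\D(\mele,\melepp)$, use Lemma~\ref{lem_permkkpp} to write $\min_{p\in\permkkp}\Dpqplain(\melepp,\melep) = \D(\melepp,\melep)$, and combine with the termwise inequality.

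The only genuine subtlety I anticipate is the behavior of $\distpolyp$ on $\polycompl$: that the affine maps $t_{\vec{u}}$ and $T^{-c}$ still act on elements of $\polycompl$, that $\nu$ of the symmetric difference of two such elements is finite and vanishes if and only if the two elements coincide, and that the set-theoretic triangle inequality for symmetric differences persists. All of these are standard once $\polycompl$ is realized (as in Definition~\ref{def_polycompl}) as a space of honest subsets of $\R^2$, or of such subsets modulo $\nu$-null sets; with that in hand, nothing in Section~\ref{sec_themetric} used any property of $\mmk$ not shared by $\mcomplmk$, and the proof is complete.
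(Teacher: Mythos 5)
Your argument is correct and follows the paper's own (terse) proof, which simply observes that the verification of Proposition~\ref{prop_metric} carries over unchanged because nothing in Section~\ref{sec_themetric} used a property of $\mmk$ not shared by $\mcomplmk$. One small imprecision: the ordering convention of Remark~\ref{rmk_order} does not literally force $p=\mathrm{Id}$ when two critical points have identical $\la_j$, $h_j$, and Taylor series (a transposition of indistinguishable points also achieves $\Dpplain=0$), but the conclusion $\mele=\melep$, which is all that positive definiteness requires, does follow since both tuples are written in the canonical order.
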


This proposition follows from the proof of Proposition \ref{prop_metric}.

\begin{remark}
 Notice that $\Didplain$ is not a metric on $\mcompl$ because it does 
 not satisfy the triangle inequality.  This can be seen in 
 Example~\ref{ex_needmin}.
\end{remark}

Throughout Section~\ref{sec_buildcompl} each space we examine can be viewed 
as a subspace of $\mcompl$ and we will endow them with the structure of a 
metric subspace.

\begin{remark}\label{rmk_msubsetmcompl}
The space $\m$ can be viewed as a subspace of $\mcompl$ because there is a
natural correspondence between the elements of $\m$ and the elements of a 
subset of $\mcompl$.  This is because there is at most one element of $\m$ 
in each equivalence class in $\mcompl$ so the space $\m$ corresponds to the
subset $\{[\mele]\mid\mele\in\m\}$.
\end{remark}

\subsection{The completion must contain \texorpdfstring{$\mcomplmk$}{BLAH}}\label{sec_buildcompl}

In the next few lemmas we start with $\mmk$ and build up to $\mcomplmk$ 
in several steps, showing that each inclusion is dense.  First we will 
show that the completion of $\mmk$ must include at least all rational 
labeled polygons which satisfy the convexity requirements.
We will use the following result of Pelayo-Pires-Ratiu-Sabatini.
\begin{lemma}[{\cite[Remark 23]{PePRS2013}}]\label{lem_rmk23}
 Any corner of a rational convex polygon can be edited in a small
 neighborhood so that it is still a rational convex polygon and 
 in that neighborhood every corner is Delzant.
 Moreover, such a neighborhood can be made as small as desired.
\end{lemma}

\begin{lemma}\label{lem_apoly}
 Let $\apoly\subset\polycompl$ be given by 
 \[
  \apoly=\left\{ [\lwppluskp] \left| \begin{array}{l} t_{\vec{u}}(\De)\in\poly\text{ for any }\vec{u}\in\{0,1\}^\mf,\\ \vec{k}\sim\vec{k}',\nu(\De)<\infty,\textrm{ and}\\  \min_{s\in\De}\pi_1(s)<\la_1<\ldots<\la_\mf<\min_{s\in\De}\pi_1(s) \end{array} \right. \right\}
 \]
 and let 
 \[
  \mmka = \apoly \times [0,1]^\mf \times \rxyz^\mf.
 \]
 Then the inclusion $\mmk\subset\mmka$ is dense.
\end{lemma}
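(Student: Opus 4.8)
The plan is to strip off the easy ingredients and reduce the lemma to a statement about polygons alone. First, $\mmk\subseteq\mmka$ is immediate: a labeled Delzant semitoric polygon has everywhere finite height, hence finite $\nu$-measure by Proposition~\ref{prop_admissible}, its shears $t_{\vec u}(\De)$ are convex by Proposition~\ref{prop_convex}, and $(0,1)\subseteq[0,1]$. For density, fix $\mele=\stfulla\in\mmka$ and $\varepsilon>0$; I will produce $\melep=\stfullap\in\mmk$ with $\Dplain(\mele,\melep)<\varepsilon$. I leave the Taylor series unchanged, so that $\sum_{j=1}^\mf\dtsz((S_j)^\infty,(S_j)^\infty)=0$, and replace each $h_j\in[0,1]$ by a nearby value $h_j'\in(0,1)$ within $\varepsilon/(3\mf)$, contributing at most $\varepsilon/3$ to $\sum_j\dh$. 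If the approximating polygon $A'$ is chosen with the \emph{same} twisting vector $\vec k'$ as a fixed representative $\lwpplusakp$ of $[\Aw]$ and with the focus-focus lines kept at $\la_1<\cdots<\la_\mf$, then the identity permutation is appropriate with shift $c=0$, and since $\nu$ is invariant under the vertical transformations $t_{\vec u}$ (cf.\ Definition~\ref{def_adm} and the proof of Proposition~\ref{prop_tkvolume}),
\[\distpolyid\big([\Aw],[\Awp]\big)=\sum_{\vec u\in\{0,1\}^\mf}\nu\big(t_{\vec u}(A)\symdiff t_{\vec u}(A')\big)=2^\mf\,\nu(A\symdiff A').\]
Hence $\Dplain(\mele,\melep)\le\Didplain(\mele,\melep)<\varepsilon$ as soon as $\nu(A\symdiff A')<\varepsilon/(3\cdot 2^\mf)$ (using that $\distpolyp$ extends to all of $\lwpoly$, per the footnote after Definition~\ref{def_distpolyp}). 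So the whole problem reduces to: \emph{given a rational convex polygon $A$ of finite $\nu$-measure with $t_{\vec u}(A)$ convex for all $\vec u\in\{0,1\}^\mf$ and $\la_1<\cdots<\la_\mf$ strictly interior, find a labeled Delzant semitoric polygon $A'$ with the same lines and twisting vector and with $\nu(A\symdiff A')$ arbitrarily small.}

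I would build $A'$ in three stages, monitoring $\nu(A\symdiff\,\cdot\,)$ throughout. \textbf{(i) Truncation.} Since $\nu(A)<\infty$, $A$ has everywhere finite height (Proposition~\ref{prop_admissible}), so $A\cap\{|x|\le N\}$ is a compact rational convex polygon with $\nu\big(A\symdiff(A\cap\{|x|\le N\})\big)\to 0$; take $N$ rational and large enough that this is small and the $\la_j$ remain strictly inside the $x$-extent, leaving only finitely many corners. \textbf{(ii) Creating the corners on the lines.} For each $j$ let $z_j$ be the point where $\ell_{\la_j}$ meets the top boundary; replace a short arc of the top boundary through $z_j$, inside a neighborhood disjoint from the other $\ell_{\la_{j'}}$, by a shallow ``tent'' whose apex lies on $\ell_{\la_j}$, with rationally chosen primitive edge vectors $(u,v)$ at the apex satisfying $\det(u,T^1 v)\in\{0,1\}$ (such $(u,v)$ exist and make the apex a strictly convex top-boundary corner), so that the modified polygon is still convex and rational with a fake or hidden Delzant corner on $\ell_{\la_j}$; doing this for all $j$ with sufficiently shallow tents perturbs $\nu(A\symdiff\,\cdot\,)$ by an arbitrarily small amount. \textbf{(iii) Desingularizing the rest.} The finitely many remaining corners, now all off the lines $\ell_{\la_j}$, are replaced by chains of Delzant corners via the toric corner-chopping (resolution) procedure of~\cite{PePRS2013}, every corner cut being taken small enough to avoid the lines $\ell_{\la_j}$ and the corners created in (ii) and to keep $\nu(A\symdiff\,\cdot\,)$ small. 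The outcome $A'$ is a compact rational convex polygon satisfying conditions (1)--(4) of Definition~\ref{def_delzsemi} — everywhere finite height (it is compact), each $\ell_{\la_j}$ meets the top boundary, that point is a fake or hidden Delzant corner, all other corners Delzant — so $[(A',(\ell_{\la_j},+1,k_j')_{j=1}^\mf)]\in\dpoly$ (as $\vec k'\sim\vec k$), and Proposition~\ref{prop_convex} certifies its shears are convex. Combining the three small perturbations gives $\nu(A\symdiff A')<\varepsilon/(3\cdot 2^\mf)$, whence $\Dplain(\mele,\melep)<\varepsilon$.

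The step I expect to be the main obstacle is (ii)--(iii) executed coherently: one must manufacture honest fake or hidden Delzant corners on the vertical lines — a determinant condition on integral edge vectors, solvable but requiring care about strict convexity and rationality — and then desingularize everything else without disturbing those protected corners, the convexity, or the positions $\la_j$, all while keeping the $\nu$-symmetric difference small. The corner-chopping half of (iii) is exactly~\cite{PePRS2013}; what is genuinely new, and where the bookkeeping lies, is the interaction with the vertical-line and twisting-index data — the ``family of polygons all at once'' phenomenon noted at the start of Section~\ref{sec_compl}. A minor point to check along the way: changing $\mathrm{length}(A'\cap\ell_{\la_j})$ slightly in stage (ii) does not invalidate the chosen $h_j'$, since one works in the normalized volume coordinate of the Remark following Definition~\ref{def_listofingre} and the tents can be taken arbitrarily shallow.
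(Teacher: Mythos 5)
Your top-level strategy matches the paper's: fix the Taylor series, nudge the $h_j$, keep the same $\la_j$ and twisting vector so that the identity alignment is available, and then the whole problem collapses to making $\nu(\De\symdiff\De')$ small, since
\[
\distpolyidplain\big([\dew],[\dewp]\big)=\sum_{\vec u\in\{0,1\}^\mf}\nu\big(t_{\vec u}(\De)\symdiff t_{\vec u}(\De')\big)=2^\mf\,\nu(\De\symdiff\De')
\]
by $\nu$-invariance under vertical transformations. The truncation in your stage~(i) is harmless but unnecessary (labeled Delzant semitoric polygons need not be compact; everywhere finite height suffices), and stage~(iii) is exactly the paper's use of~\cite[Remark 23]{PePRS2013}.

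The genuine gap is stage~(ii), precisely where you flagged the risk. You propose to manufacture a fake or hidden Delzant corner on $\ell_{\la_j}$ with a \emph{single} chevron cut, asserting that primitive edge vectors $(u,v)$ with $\det(u,T^1v)\in\{0,1\}$ ``exist.'' They do not always exist once the cut is required to sit inside the original angle. Write the cut's slopes as $m_1>m_L$, $m_2<m_R$ (the direction forced by convexity of $\De'\subset\De$). If $u=(-q_1,-p_1)$, $v=(q_2,p_2)$ are primitive with $m_i=p_i/q_i$, then a short computation gives
\[
\det(u,T^1v)=1\quad\Longleftrightarrow\quad m_2=m_1-1-\tfrac{1}{q_1q_2},
\]
and $\det(u,T^1v)=0$ forces $m_1-m_2=1$ with $q_1=q_2$. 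In either case $m_1-m_2\le 1$, whereas $m_1-m_2>m_L-m_R$. But membership in $\apoly$ (convexity of $t_{\vec u}(\De)$ with $u_j=1$) only guarantees $m_L-m_R\ge 1$, with no upper bound. For a concrete failure take the vertex with $u=(-1,-3)$, $v=(1,0)$, i.e.\ $m_L=3$, $m_R=0$: here $\det(u,T^1v)=2$, so the corner must be modified, yet $m_1-m_2>3$ for any admissible single cut, so neither condition can be met. Thus a one-chevron ``tent'' cannot produce the required corner in general; one needs a chain of cuts, and verifying the $\det(u,T^1v)$ condition along such a chain directly is where the bookkeeping explodes.

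The paper's resolution is the key idea you are missing: shear first. Pass to the representative $t_{\vec u}(\De')$ with $\vec u=(1,\dots,1)$, where the fake/hidden conditions at the $p_j$ become exactly the ordinary Delzant condition (Delzant corner, or no corner at all), apply the toric corner-chopping of~\cite[Remark 23]{PePRS2013} a second time inside the neighborhoods $V$ of the sheared points $t_{\vec u}(p_j)$, and shear back. A Delzant corner of the sheared polygon on $\ell_{\la_j}$ pulls back to a hidden Delzant corner, an interior point of an edge pulls back to a fake corner, and the remaining Delzant corners of the chain pull back to Delzant corners because $T^{\pm1}\in\mathrm{GL}(2,\Z)$ preserves determinants. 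This single change makes your stage~(ii) unnecessary and closes the gap; the rest of your argument then goes through.
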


\begin{proof}

 Fix any element $\mele = \stfull \in \mmka$.  Since 
 $\Dplain\leq\Didplain$ we will show there exists an element $\melep\in\mmk$ 
 arbitrarily close to $\mele$ with respect to the function $\Didplain$.  
 Clearly we will have no problems with making the volume invariant or the
 Taylor series arbitrarily close so just consider the polygons.

 Let $[\dew]=[\lwpplus]\in\apoly$ and fix $\varepsilon>0$.  We will show 
 there exists some element $[\dewp]\in \dpoly$ such that 
 $\distpolyid([\dew],[\dewp])<\varepsilon$.  We will choose this element of 
 $\dpoly$ to have the same $\la_j$ values as $\dew$.  Since the action of 
 $t_{\vec{u}}$, $\vec{u}\in\{0,1\}^\mf$, does not change the volume of sets
 we have
 \[
  \distpolyidplain([\dew], [\dewp]) \leq 2^\mf \nu (\De \symdiff \De')
 \]
 where $\lwppplus\in[\dewp].$  To complete the proof it suffices 
 to show that there exists an element $\lwppplus\in\dpoly$ such that $\De$ 
 and $\De'$ are equal except on a set of $\nu$-measure less than 
 $2^{-\mf}\varepsilon$.

 For $j=1,\ldots,\mf$ let $p_j\in\R^2$ be the intersection of $\ell_{\la_j}$ 
 with the top boundary of $\De$.  Let $U\subset\R$ be a union of disjoint 
 neighborhoods around each corner of $\De$ which is not an element of 
 $\{p_j\}_{j=1}^\mf$ such that $\nu (U) < \nicefrac{\varepsilon}{2^{\mf+1}}$. 
 Also, let $V\subset\R\setminus U$ be a union of disjoint neighborhoods around
 each point $p_j$ for each $j=1,\ldots,\mf$ and 
 $\nu (V) < \nicefrac{\varepsilon}{2^{\mf+1}}$.  We will define $\De'$ in several 
 stages, editing it several times.  Start by assuming that $\De'=\De$. 
 By Lemma~\ref{lem_rmk23} we can edit $\De'$ on the set $U$ so that
 every vertex is Delzant except possibly the ones in $V$.  
 
 Now, recall that for a \emph{semitoric} polygon to be Delzant the points 
 $p_j$ must all either be fake or hidden Delzant corners.  This is equivalent
 to saying that the corners on the top boundary of $t_{\vec{u}}(\De')$ must 
 all be Delzant for $\vec{u} = <1,\ldots,1>$.  Since $t_{\vec{u}}(\De')$ is
 a convex polygon and $t_{\vec{u}}(V)$ is a neighborhood of the edges 
 $t_{\vec{u}}(p_j)$ we can again use Lemma~\ref{lem_rmk23} to conclude
 that we may edit $t_{\vec{u}}(\De')$ inside of the set $V$ such that all 
 of the vertices on the top boundary are Delzant.  Now we have finished 
 defining $t_{\vec{u}}(\De')$ and since this map is invertible we have also
 defined $\De'$.  Notice that for $j=1,\ldots,\mf$ each point $t_{\vec{u}}(p_j)$ 
 is either a Delzant corner, which would make $p_j$ a hidden Delzant corner,
 or it is not a vertex at all, in which case $p_j$ would be a fake corner.  
 Also, it is easy to check that any new Delzant corner we had to define in 
 $t_{\vec{u}}(V)$ which is not on the point $t_{\vec{u}}(p_j)$ for some 
 $j=1,\ldots,\mf$ gets transformed by $t_{\vec{u}}^{-1}$ to form a Delzant
 corner on $\De'$.  In conclusion, $[\dewp]$ is a Delzant semitoric polygon 
 and each of the $2^\mf$ polygons in the equivalence class is equal to each
 polygon in $[\dew]$ except on a set of $\nu$-measure less than 
 $\nicefrac{\varepsilon}{2^\mf}$.
\end{proof}

So from the above Lemma we conclude that the completion of $\mmk$ must contain $\mmka$.
In the next Lemma we show it must contain a larger set. The only difference between 
$\apoly$ and $\bpoly$ is that $\bpoly$ allows irrational polygons.

\begin{lemma}\label{lem_bpoly}
 Let 
 \[
  \bpoly=\left\{ [\lwppluskp] \left| \begin{array}{l} t_{\vec{u}}(\De)\text{ is a convex polygon for any }\vec{u}\in\{0,1\}^\mf,\\ 0<\nu(\De)<\infty, \vec{k}\sim\vec{k}'\textrm{ and}\\  \min_{s\in\De}\pi_1(s)<\la_1<\ldots<\la_\mf<\max_{s\in\De}\pi_1(s) \end{array} \right. \right\}
 \]
 and let 
 \[
  \mmkb = \bpoly \times [0,1]^\mf \times \rxyz^\mf.
 \]
 Then the inclusion $\mmka\subset\mmkb$ is dense.
\end{lemma}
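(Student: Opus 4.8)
The plan is to mirror the structure of the proof of Lemma \ref{lem_apoly}. Since the volume and Taylor series coordinates live in the same spaces $[0,1]^\mf$ and $\rxyz^\mf$ for both $\mmka$ and $\mmkb$, and since $\Dplain\leq\Didplain$, it suffices to show that for every $[\dew]=[\lwpplus]$ representing a point of $\bpoly$ and every $\varepsilon>0$ there is a rational $[\dewp]$ representing a point of $\apoly$ with $\distpolyidplain([\dew],[\dewp])<\varepsilon$; the volume and Taylor parts are then matched exactly as before. I would first record the elementary continuity estimate that, because each shear $t_{\vec u}$ preserves $\nu$ and moving a single cut line a distance $\eta$ changes the sheared polygon by a set of $\nu$-measure at most $2\eta\|g\|_{L^1}$ (finite since $xg\in L^1(\mu)$ forces $g\in L^1(\mu)$), one has $\distpolyidplain\to 0$ whenever $\la_j'\to\la_j$ and $\nu(\De\symdiff\De')\to 0$. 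This reduction is needed because a rational convex polygon need not have rational vertices, so its marked abscissae $\la_j$ need not be rational, and when the $\la_j$ do not lie in a common coset of $\Q$ they cannot all be kept fixed; so I allow $\la_j'$ to be a rational number close to $\la_j$.

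Next I would reduce to the compact case. Since $\nu(\De)<\infty$, Proposition \ref{prop_admissible} says $\De$ has everywhere finite height; choosing $N$ with $\la_\mf<N$, $-N<\la_1$ and $\nu(\De\setminus([-N,N]\times\R))$ small, and replacing $\De$ by $\De_1=\De\cap([-N,N]\times\R)$, costs little $\nu$-measure, and since $t_{\vec u}$ fixes the $x$-coordinate we have $t_{\vec u}(\De_1)=t_{\vec u}(\De)\cap([-N,N]\times\R)$, still convex. Now $\De_1$ is a compact convex polygon: its top boundary is the graph of a concave piecewise-linear $f$ on a compact interval $[a,b]$ and its bottom boundary the graph of a convex piecewise-linear $g$, with $f\geq g$. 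As in the analysis underlying Proposition \ref{prop_convex}, the condition that $t_{\vec u}(\De_1)$ be convex for all $\vec u\in\{0,1\}^\mf$ is equivalent to the single condition that $\De_1$ and $t_{(1,\ldots,1)}(\De_1)$ both be convex, which amounts to: $f$ concave, $g$ convex, $f\geq g$, and the slope of $f$ dropping by at least $1$ across each line $\ell_{\la_j}$ (no condition is imposed on $g$ at the $\la_j$, since $g$ convex already makes each sheared jump non‑negative).

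The core of the argument is then a rationalization: I would produce rational-slope piecewise-linear functions $f'$ (concave) and $g'$ (convex) on a slightly smaller rational interval $[a',b']$, uniformly within $\delta$ of $f$ and $g$, with rational breakpoints, one of them a rational $\la_j'$ near each $\la_j$, across which the slope of $f'$ drops by at least $1$; then $\De'$, the region between $f'$ and $g'$, is a rational convex polygon (a piecewise-linear interpolant with rational breakpoints and rational values has rational edge directions, and $t_{\vec u}(\De')$ is obtained by applying integer matrices $T^{U_i}$ piecewise), it satisfies the sheared-convexity conditions by construction, it has $\nu(\De_1\symdiff\De')$ controlled by $\delta$ (using the boundedness of $g$ on $[-N,N]$ to bound the $\nu$-measure of a thin vertical neighborhood of $\partial\De_1$), and $f'\geq g'$ is automatic once $[a',b']$ is chosen inside the interior where $f>g$. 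Feeding this into the continuity estimate of the first paragraph, and choosing $N$ large and $\delta$ small, gives $\distpolyidplain([\dew],[\dewp])<\varepsilon$, which completes the proof.

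The main obstacle is precisely realizing the rational slopes of $f'$ with the required drops of at least $1$ across the $\la_j'$ while staying uniformly close to $f$: for a polygon in $\bpoly$ the original slope drop across some $\ell_{\la_j}$ may equal $1$ exactly (a ``fake corner'' configuration), such corners may be chained, and the slopes involved may be irrational, so a naive perturbation of the existing slopes fails. I would resolve this by choosing the new slopes left to right: take the leftmost slope to be a rational slightly above the original, and each subsequent slope a rational in the interval $[\,\alpha_i-\delta,\ \min(\alpha_i,\ \tilde\alpha_{i-1}-\delta_i)\,]$, where $\alpha_i$ is the original slope of the $i$-th edge and $\delta_i\in\{1,\ 0^+\}$ according to whether the corner between edges $i-1$ and $i$ is a marked line; one checks inductively that these intervals are non-empty (this uses only that the original drops satisfy $\alpha_{i-1}-\alpha_i\geq\delta_i$), so every new slope stays within $\delta$ of the old one while all drops of at least $1$ are preserved. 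The same kind of one-sided selection, without the $\geq 1$ constraints, handles $g'$.
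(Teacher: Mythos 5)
Your argument is correct, but it takes a more elaborate route than the paper's. The paper's own proof is terser: it keeps the marked abscissae $\la_j$ fixed, finds a small open neighborhood of $\partial\De$, and approximates $\De$ by a rational polygon with boundary inside that neighborhood, citing the compact approximation argument of~\cite{PePRS2013} and then handling the non-compact edges by a direct rational-slope estimate using $\nu$-finiteness; the shear-compatibility of the new slopes at the cuts is left implicit. Your proof makes that last point explicit --- the left-to-right selection of rational slopes that preserves the ``drop $\geq 1$'' constraints across chains of fake corners is exactly the detail the paper glosses over, and it is a genuine improvement in rigour. Your reduction to the compact case by intersecting with $[-N,N]\times\R$ is also a clean alternative to the paper's direct treatment of unbounded faces.

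One step of your argument is based on a false premise, though it does not invalidate the proof: you claim the $\la_j$ ``cannot all be kept fixed'' when they lie in distinct $\Q$-cosets, and therefore replace them by nearby rationals $\la_j'$. But $\poly$ is defined by rationality of \emph{edge directions}, not of vertex coordinates, and the shears $t_{\vec u}$ act by integer matrices $T^{\pm 1}$ on each side of $\ell_{\la_j}$, so they preserve rationality of edge directions regardless of where the cut lines sit. A rational polygon can have a top-boundary vertex at any real abscissa: two edges of rational slope through a point $(\la_j, y)$ with $\la_j, y$ irrational still form rational edges. So you could have kept each $\la_j$ fixed, as the paper does, and dispensed with both the rational $\la_j'$ and the continuity estimate $2\eta\|g\|_{L^1}$ for moving cut lines. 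The detour is harmless --- your estimate for it is correct and the perturbed $\la_j'$ remain strictly between $\min_{s\in\De'}\pi_1(s)$ and $\max_{s\in\De'}\pi_1(s)$ for small enough perturbation --- but it is unnecessary work.
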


\begin{proof}
 
 Just as in the proof of Lemma \ref{lem_apoly} we can see that we only need 
 to consider the polygons.  Suppose that $[\dew]\in\bpoly$ and $\lwpplus\in[\dew]$.
 Given any $\varepsilon>0$ we can find an open neighborhood of the boundary of 
 $\De$ which has $\nu$-measure less than $\varepsilon$ (since the boundary has 
 measure zero and $\nu$ is regular) and we may approximate $\De$ by a rational 
 polygon with boundary inside of this neighborhood. In the case that $\De$ is 
 compact this can be done by approximating the irrational slopes with rational
 ones (exactly as done in~\cite{PePRS2013}).
 
 This strategy will work even if $\De$ is not compact. For the faces of $\De$ 
 which are non-compact with irrational slope (if there are any) we can still 
 approximate these with a line of rational slope because of the properties of
 the admissible measure $\nu$.  Suppose there is a non-compact face of $\De$
 which has irrational slope $r\in\R\setminus\Q$.  Then choose $q\in\Q$ such 
 that $q<r$ and $\nu(\{qx<y<rx\})<\varepsilon$ and let the edge on the rational
 polygon have slope $q$.  Such a slope can be chosen because if the measure of
 that set is always finite and replacing $q$ by $q_2=\nicefrac{q+r}{2}$ will 
 produce a wedge with half the measure of the original.
 
\end{proof}

\begin{remark}\label{rmk_order}
 Recall that for simple semitoric systems we order the focus-focus
 points by their $J$-value, that is, we use the $x$-component of the
 location of the momentum map image of the focus-focus point.
 Since in the completion it is possible for $\la_j = \la_{j+1}$ for some $j\in 1,\ldots, \mf-1$ the 
 order in which the critical points are labeled in a system cannot be made unique 
 by only considering the $x$-components. 
 This means that there could be two elements in $\mcomplmk$ which have the same 
 invariants except labeled in a different order.  Of course, we do not want this
 because these two elements should be the same, so we use the other invariants 
 to create a unique ordering on the critical points of any element of $\mcomplmk$.
 We fix the order so that if $\la_j=\la_{j+1}$ for some $j=1,\ldots,\mf-1$ then 
 we require that $h_j\leq h_{j+1}$.  In the case that $\la_j = \la_{j+1}$ and 
 $h_j = h_{j+1}$ we look to the Taylor series.  In this situation we require that
 the coefficient of $X$ of the Taylor series $(S_j)^\infty$ is less than or equal
 to the coefficient of $X$ in $(S_{j+1})^\infty$ and if those are equal we look 
 to the coefficient of $Y$ and continue in this fashion.  Now given any system
 with critical points there is a unique order in which to label them which is
 essentially the lexicographic order on the invariants.
\end{remark}

For the next Lemma we only slightly change the restrictions on the 
$(\la_j)_{j=1}^\mf$.  Notice that we allow $\la_i\leq \la_{i+1}$ instead
of $\la_i<\la_{j+1}$ and additionally allow (positive only) infinite
values for the $\la_j$.  This can only happen in the case that the 
polygon is non-compact.  If $\la_j=+\infty$ then we define $t_{j}^1$
to be the identity because all of $\R^2$ is to the left of this value.
We allow positive infinity and not negative infinity because as
$\la_j\to\infty$ the map $t^1_j$ becomes identity but as
$\la_j\to-\infty$ the map $t^1_j$ does not converge to anything.

\begin{lemma}\label{lem_mmkc}
 Let 
 \[
  \cpoly=\left\{ [\lwppluskp] \left| \begin{array}{l} t_{\vec{u}}(\De)\text{ is a convex polygon for any }\vec{u}\in\{0,1\}^\mf,\\ 0<\nu(\De)<\infty, \vec{k}\sim\vec{k}',\\ \la_j\in\R\cup\{\infty\} \text{ for }j=1,\ldots,\mf\text{, and }\\ \min_{s\in\De}\pi_1(s)\leq\la_1\leq\ldots\leq\la_\mf \leq\max_{s\in\De}\pi_1(s) \end{array} \right. \right\}\]and let \[\mmkc = \cpoly \times [0,1]^\mf \times \rxyz^\mf.
 \]
 The inclusion $\mmkb\subset\mmkc$ is dense.
\end{lemma}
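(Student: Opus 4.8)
The plan is to follow the template of Lemmas~\ref{lem_apoly} and~\ref{lem_bpoly}. Since $\Dplain\le\Didplain$ it suffices to approximate a given $\mele\in\mmkc$ in $\Didplain$ by elements of $\mmkb$, and since the volume and Taylor--series coordinates take values in the fixed metric spaces $[0,1]$ and $\rxyz$ and may be copied over verbatim, only the polygon coordinate needs attention. So, given $[\lwppluskp]\in\cpoly$ --- where some of the $\la_j$ may coincide with a neighbour, may equal $\min_{s\in\De}\pi_1(s)$ or $\max_{s\in\De}\pi_1(s)$, or may equal $\infty$ --- and $\varepsilon>0$, I must produce $[\lwppplusprime]\in\bpoly$, so with $\min_{s\in\De'}\pi_1(s)<\la'_1<\cdots<\la'_\mf<\max_{s\in\De'}\pi_1(s)$ all finite, with $\distpolyidplain([\lwppluskp],[\lwppplusprime])<\varepsilon$. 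Keeping the twisting vector unchanged, the shift $c$ of Definition~\ref{def_distpolyp} is $0$, so this distance equals $\sum_{\vec u\in\{0,1\}^\mf}\nu\big(t_{\vec u}(\De)\symdiff t'_{\vec u}(\De')\big)$ with $t'_{\vec u}$ built from the lines $\ell_{\la'_j}$; and the $\mf=0$ case is trivial because then $\cpoly=\bpoly$.

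The idea is to move each $\la_j$ to a nearby finite value $\la'_j$ with $\la'_1<\cdots<\la'_\mf$ strictly interior to $\De'$, and to alter $\De$ only on a set of small $\nu$-measure so that the top boundary of $\De'$ has a slope drop of at least $1$ at every abscissa $\la'_j$. This slope-drop property is exactly what is needed for $[\lwppplusprime]\in\bpoly$: a direct check shows that the elementary shear $t^{u_j}_{\ell_{\la'_j}}$ lowers the top-boundary slope drop at $\la'_j$ by $u_j\le 1$ and leaves every other top-boundary slope drop unchanged, so the top boundary of $t_{\vec u}(\De')$ stays concave, i.e.\ $t_{\vec u}(\De')$ stays convex, for all $\vec u\in\{0,1\}^\mf$. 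Concretely: at a $\la_j$ that is already finite, strictly interior and separated from its neighbours I take $\la'_j=\la_j$ and do nothing, since membership in $\cpoly$ already forces the slope drop there to be at least the multiplicity of that value among $\la_1,\dots,\la_\mf$; a block of coinciding interior values is handled by cutting a small cap off $\De$ over a tiny interval about that abscissa, splitting its slope-drop-$(r{+}1)$ vertex into $r{+}1$ vertices of slope drop $1$ at distinct nearby abscissae; a $\la_j$ equal to $\min_{s\in\De}\pi_1(s)$ or $\max_{s\in\De}\pi_1(s)$ (hence finite, as $\De$ is then bounded on that side) is pushed slightly inward and a small convex cap carrying a slope-drop-$1$ vertex at $\la'_j$ is spliced onto the relevant end of $\De$; and a $\la_j=\infty$ (so $\De$ is unbounded to the right) is handled by choosing $L$ large, replacing $\De$ on $\{x\ge L-1\}$ by a bounded convex cap agreeing with $\De$ at $x=L-1$, carrying a slope-drop-$1$ vertex at $\la'_j=L$, and extending a little past $L$ --- the hypothesis $\nu(\De)<\infty$, via Proposition~\ref{prop_admissible}, forces the height of $\De$ to $0$ as $x\to\infty$, which is what makes this last splice convex.

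For the estimate, let $R$ be the union of the finitely many thin vertical strips and the single far tail $\{x\ge L-1\}$ on which $\De$ has been modified; then $R$ is a union of vertical strips, $\De'$ has bounded height on each strip, and both $\nu(\De\cap R)$ and $\nu(\De'\cap R)$ --- hence also $\nu(\De\symdiff\De')$, since $\De$ and $\De'$ agree off $R$ --- are arbitrarily small: the strip contributions because $\nu$ is finite on $\De$ and the strips shrink to vertical segments of measure zero, and the tail contribution because $\nu(\De)<\infty$ gives $\nu(\De\cap\{x\ge L-1\})\to 0$ as $L\to\infty$ with the cap a bounded set of small measure for $L$ large. Writing $\nu(t_{\vec u}(\De)\symdiff t'_{\vec u}(\De'))\le\nu(t_{\vec u}(\De)\symdiff t'_{\vec u}(\De))+\nu(t'_{\vec u}(\De)\symdiff t'_{\vec u}(\De'))$, the second term equals $\nu(\De\symdiff\De')$ because $t'_{\vec u}$ is a $\nu$-preserving bijection (a piecewise vertical shear), and the first is bounded by $2\nu(\De\cap R)$ plus a term $2\eta\,\|g\|_{\mathrm{L}^1(\mu)}$ arising because, outside $R$, the maps $t_{\vec u}$ and $t'_{\vec u}$ differ only by a vertical translation of size $\eta$ (the total of the small moves of the finite $\la_j$'s), using that $\nu$ is vertical-translation invariant and $g=\dnudmu\in\mathrm{L}^1(\mu)$. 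Summing the $2^\mf$ terms gives $\distpolyidplain([\lwppluskp],[\lwppplusprime])<\varepsilon$.

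I expect the main obstacle to be the convexity bookkeeping for the local modifications --- in particular the fact that one cannot introduce a slope drop of $1$ in the top boundary of a convex polygon by an edit confined to an arbitrarily short horizontal interval, but can do so by an edit confined to a small-$\nu$-measure region: near a finite degenerate abscissa this works because the vertical strip is thin, and near $x=\infty$ because $\nu(\De)<\infty$ makes $\De$ asymptotically a half-line, so a thin convex wedge carrying the needed corner costs negligible measure. Verifying carefully that these wedges and caps really yield a convex $\De'$ (especially that the splices to $\De$ are convex, using the asymptotic slopes of $\De$) and create no new slope-deficient vertices at the other $\la'_{j'}$ is where the work lies.
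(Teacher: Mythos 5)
Your construction follows the paper's route: cut a small corner at each coinciding finite abscissa (justified by the slope drop that convexity of $t_{\vec{u}}(\De)$ forces there), cap or truncate near $x=\infty$ to bring $\la_j=\infty$ to a finite value, and bound the $\nu$-measure of the symmetric difference; you are in fact more systematic than the paper's own proof in also explicitly treating $\la_1=\min_{s\in\De}\pi_1(s)$ and $\la_\mf=\max_{s\in\De}\pi_1(s)$ finite. However, your treatment of the $\la_j=\infty$ step rests on a false claim: you assert (twice) that $\nu(\De)<\infty$, via Proposition~\ref{prop_admissible}, forces the height of $\De$ to tend to $0$ as $x\to\infty$, i.e.\ that $\De$ is asymptotically a half-line, and you use this to conclude that the far splice is convex and that the wedge carrying the new corner costs negligible measure. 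Proposition~\ref{prop_admissible} gives only \emph{everywhere finite} height. The horizontal half-strip $\{0\le y\le 1,\ x\ge 0\}$ and the wedge $\{|y|\le x,\ x\ge 1\}$ are convex polygons of finite $\nu$-measure for every admissible $\nu$ (using $g\in L^1$ and $xg\in L^1$ respectively), and their heights do not tend to $0$; in the second example the height is unbounded. So the stated justification for convexity of the splice does not hold.

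The gap is repairable without changing the overall strategy. Since $\De$ is convex, beyond some $x_0$ its top and bottom edges are rays of slopes $m_+\ge m_-$; setting the cap's top slope past $\la_j'=L$ equal to $m_--1$ (so the slope drop at $L$ is $m_+-m_-+1\ge 1$) and keeping the bottom slope $m_-$ makes the two edges close up without any assumption on the height, and the measure bound comes from $\nu(\De\cap\{x\ge L-1\})\to 0$ as $L\to\infty$ by dominated convergence, not from height decay. Alternatively, as in the paper, one can simply take $\De'=\De\cap\{x\le n\}$ with $\la_j'=n$, producing a boundary coincidence $\la_j'=\max\pi_1(\De')$ which you already know how to remove by a small corner cut. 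As written, though, the reason you give for the convexity of the far splice is incorrect and needs replacing.
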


\begin{proof}
 Again, we only need to consider the polygons.  We will prove this Lemma in 
 two steps.  First, suppose that $[\dew]\in\cpoly$ has $\la_j<\infty$ for 
 each $j=1,\ldots,\mf$ so the only thing that is keeping $[\dew]$ from 
 being in $\bpoly$ is the possibility that $\la_j=\la_{j+1}$ for some 
 fixed $j\in\{1,\ldots,\mf-1\}$. Let $\vec{u}$ be all zeros except for a
 1 in the $j^{th}$ and $(j+1)^{st}$ positions.  Then $[\dew]\in\cpoly$ 
 implies that $t_{\vec{u}}(\De)$ is convex so we know that there is a 
 vertex of $\De$ on the top boundary with $x$-coordinate $\la_j$.  Let 
 $m_1$ denote the slope of the edge to the left of this vertex and let 
 $m_2$ denote the slope to the right.  Then we can see that the convexity
 of $t_{\vec{u}}(\De)$ implies that $m_1\geq m_2+2$.  Now we want to show
 that there exists some $[\dewp]\in\bpoly$ arbitrarily close in $\Didplain$
 to $[\dew]$.  Let $[\dewp]$ be equal to $[\dew]$ except that 
 $\la_j'<\la_j<\la_{j+1}'$ and that the top boundary of $\De'$ has slope 
 $m_1-1$ on the interval $x\in(\la_j',\la_{j+1}')$.  So, as is shown in 
 Figure~\ref{fig_mmkc1}, we have cut the corner off of $\De$ to produce 
 $\De'$ and clearly this cut can be made as small as desired.  This process
 can be repeated for each instance of $\la_j = \la_{j+1}$ for 
 $j\in\{1,\ldots,\mf\}$.
 
\begin{figure}
 \centering
 \includegraphics[height=170pt]{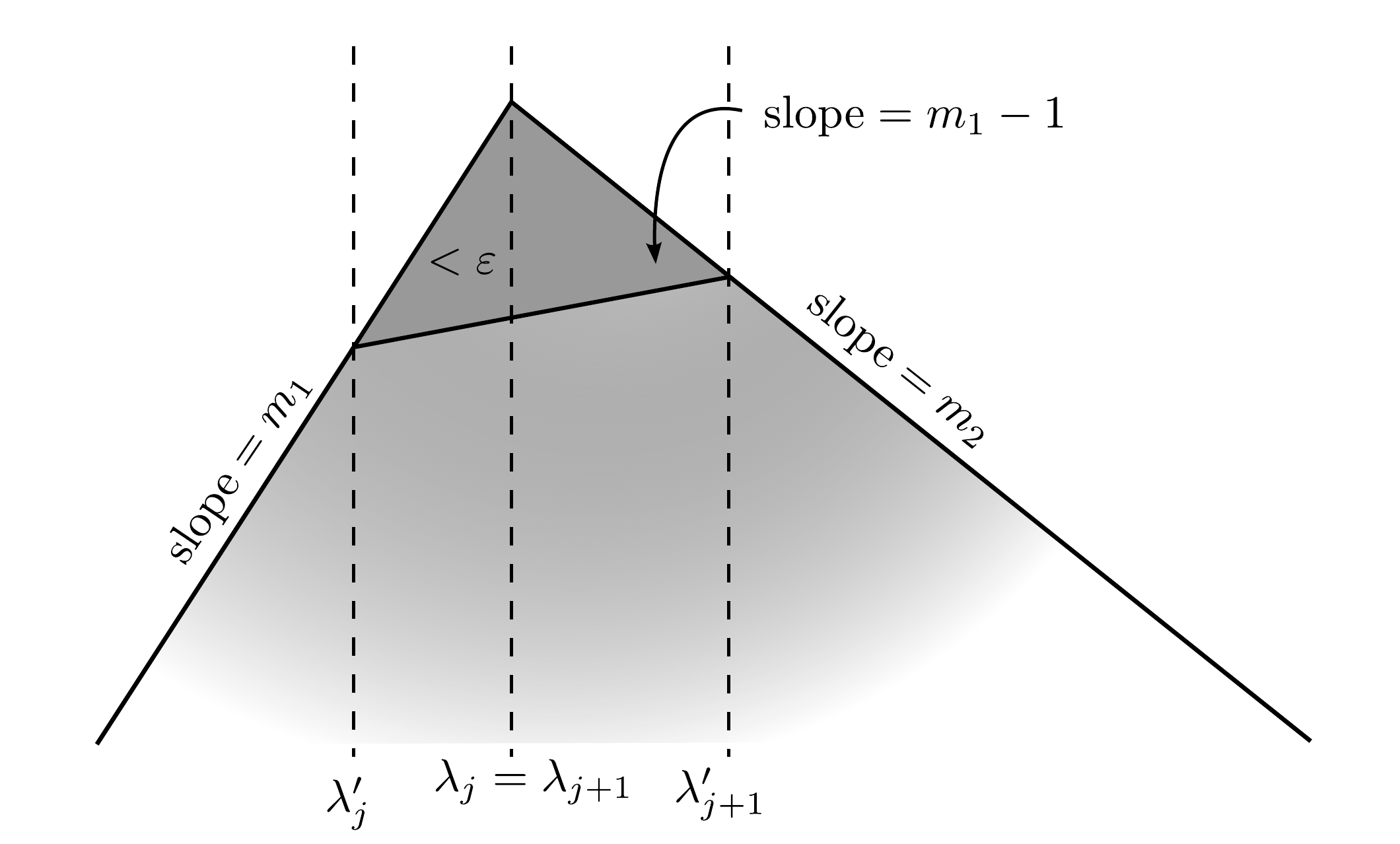}
 \caption{By cutting the corner and adjusting the values of $\la_j$ and 
          $\la_{j+1}$ of an element in $\cpoly$ we can produce an element 
          of $\bpoly$ which is very close.}
 \label{fig_mmkc1}
\end{figure}
 
 Now we proceed to step two. Assume that $[\dew]=[\lwpplus]\in\polycompl$ 
 has $\la_\mf = +\infty$ (and $\la_j<\infty$ for $j=1,\ldots,\mf-1$) and
 we will construct a sequence with $[\dew]$ as its limit.  Let 
 $N = \max_{j=1,\ldots,\mf-1}\abs{\la_j}$ and for any $n\in\N$ 
 which satisfies $n>N$ define a set $\De^n=\De\cap[-n,n]$ with 
 $\la_\mf = n$.  That is 
 \[
  [\De_w^n] = (\De^n, (\ell_{\la_j}, +1)_{j=1}^{\mf-1}, (\ell_{n}, +1)).
 \]
 Notice that each polygon in each family $[\De^n_w]$ is convex because 
 it is the intersection of two convex sets. Then 
 $\distpoly([\De_w],[\De_w^n])\to 0$.  Clearly a similar process can 
 be used to produce sets which have multiple $\la$ values which are 
 infinite.
  
\end{proof}

Next we would like to consider arbitrary convex sets, but there is a subtlety.
So far we have only been working with polygons
and if the symmetric difference of two polygons has zero measure in $\nu$,
and therefore also in $\mu$, those polygons are the same set, but
this is not true for arbitrary subsets of $\R^2$.
For the measure of the symmetric difference to produce a metric on the collection
of subsets of $\R^2$ one must only consider these sets up to measure zero corrections.
Thus, instead of considering only convex sets we will now consider all sets which are convex up to measure zero corrections
(as is done in~\cite{PePRS2013}).
Recall that $\nu$ and the Lebesgue measure $\mu$ have precisely the same measure zero sets,
so the equivalence relation in the following definition does not depend on
the choice of admissible measure.

\begin{definition}\label{def_polycompl}
 Let 
 \[
  \mathcal{C}_{\mf,[\vec{k}]}=\left\{ [\lwpplusakp] \left| \begin{array}{l} A\subset\R^2, \la_j\in\R\cup\{\infty\}\text{ for }j=1,\ldots,\mf,\\t_{\vec{u}}(A)\text{ is a convex set for any }\vec{u}\in\{0,1\}^\mf,\\  \vec{k}\sim\vec{k}', 0<\nu(\De)<\infty \text{, and}\\  \min_{s\in A}\pi_1(s)\leq\la_1\leq\ldots\leq\la_\mf\leq\max_{s\in A}\pi_1(s) \end{array} \right. \right\}.
 \]
 Further, for any measurable sets $A,B\subset\R^2$ we say $A\simeq B$ if 
 and only if $\mu(A\symdiff B)=0$ and let $[A]$ denote the equivalence 
 class of $A$ with respect to this relation.  Finally, let 
 \[
  \polycompl =\left\{\big[\lwpplusaeq\big] \left| \begin{array}{l}[\lwpplusa]\in\mathcal{C}_{\mf,[\vec{k}]} \text{ or}\\\nu(A)=0\text{ and }\la_j=0\text{ for }j=1,\ldots,\mf\end{array}\right.\right\}.
 \]
\end{definition}

Here it is important to notice that we have included one extra element 
in each $\polycompl$, the equivalence class of the empty set.  For this
element the values of $\la_j$ are unimportant so we set them all equal
to zero (in fact, any fixed number will work).
For the last Lemma in this section we will show that the inclusion in 
$\mcomplmk$, which is defined in Definition \ref{def_mcomplmk}, is 
also dense.
The explanation of 
how $\mmkb$ can be viewed as a subspace of $\mcomplmk$ is in 
Remark~\ref{rmk_msubsetmcompl}.

\begin{lemma}\label{lem_cpoly}
 The inclusion $\mmkc\subset\mcomplmk$ is dense.
\end{lemma}

\begin{proof}
 
  Once more, we only have to consider the labeled weighted convex sets 
  since it is easy to align the volume invariant and Taylor series 
  invariant. Let $[\lwpplusa]=[\dew]\in\polycompl$.  Now pick 
  $[\lwpplusb]\in\cpoly$ and notice that they have the same 
  $\la$ values, so if $A$ and $B$ are close then so are all of
  the other polygons.  Simply approximate $A$ by a family of 
  disjoint rectangles contained in $A$.  We need to be sure 
  that $t_{\vec{u}}(B)$ is convex for any choice of $\vec{u}\in\{0,1\}^\mf$
  so take $B$ to be the convex hull of the rectangles which approximate 
  $A$ from the inside and the points in the top boundary of $A$ which 
  have $x$-value equal to $\la_j$ for some $j\in\{1,\ldots,\mf\}$. 
  Since $B\subset A$ and $t_{\vec{u}}(A)$ is convex around $x=\la_j$ 
  for each $j=1,\ldots,\mf$ we know that $t_{\vec{u}}(B)$ is convex 
  (Figure \ref{fig_cpoly}).
  
\begin{figure}
 \centering
 \includegraphics[height=150pt]{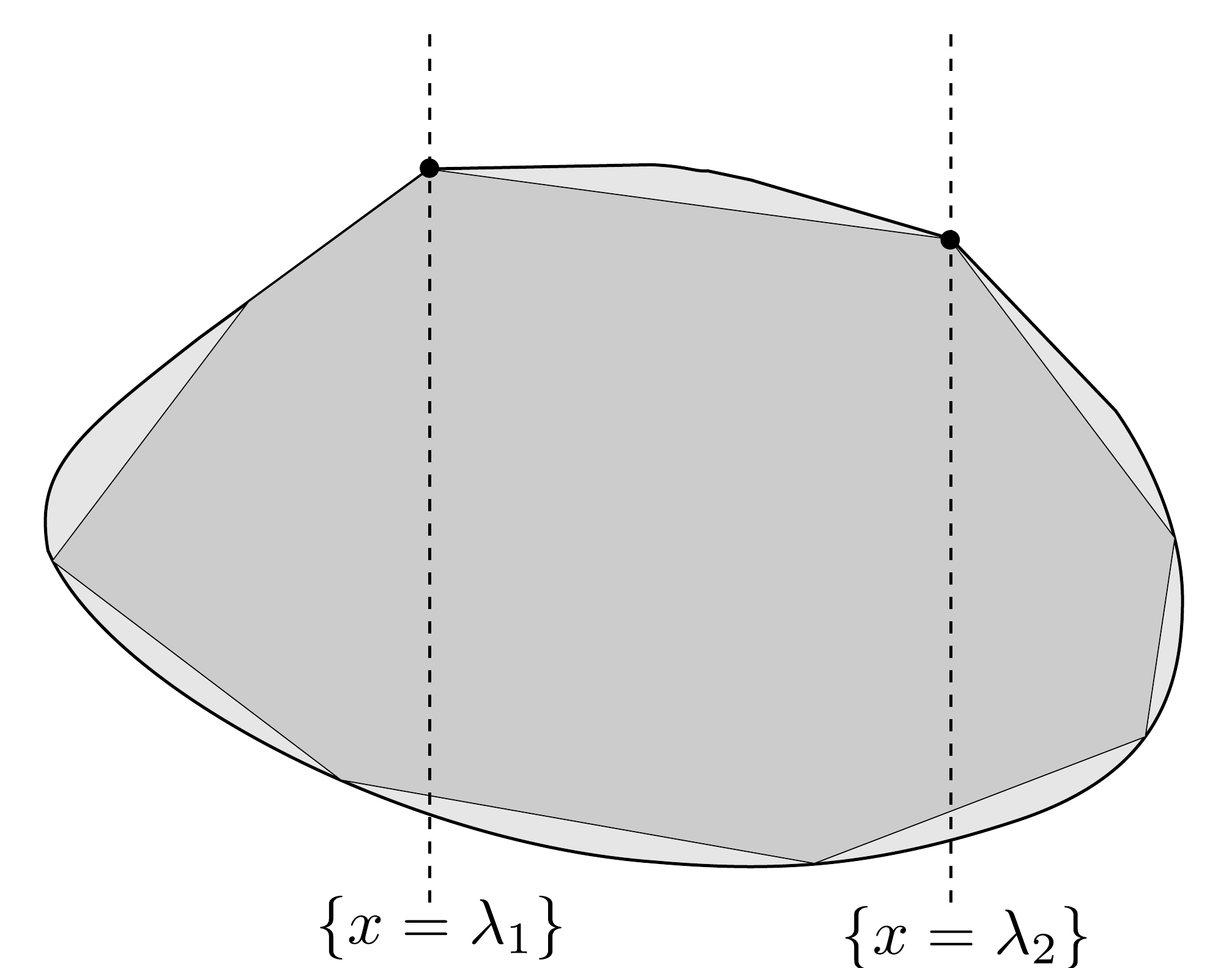}
 \caption{An arbitrary convex set can be approximated from the inside by a polygon.  The convexity requirements will be met as long as the vertices on the top boundary at $\{x=\la_j\}$ for each $j=1,\ldots,\mf$ are included in the polygon.}
 \label{fig_cpoly}
\end{figure}
 
\end{proof}

From the results of Lemma~\ref{lem_apoly}, Lemma~\ref{lem_bpoly}, 
Lemma~\ref{lem_mmkc}, and Lemma~\ref{lem_cpoly}, the following lemma is immediate.
\begin{lemma}\label{lem_mdense}
 The completion of $\mmk$ must contain $\mcomplmk$.
\end{lemma}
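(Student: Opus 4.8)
The plan is to chain together the four density results already established in this section and then invoke the universal property of the metric completion. First I would record the elementary fact that density of inclusions is transitive for metric subspaces: if $X\subseteq Y\subseteq Z$ are metric subspaces of a common metric space with $X$ dense in $Y$ and $Y$ dense in $Z$, then $X$ is dense in $Z$ — given $z\in Z$ and $\varepsilon>0$, pick $y\in Y$ with $d(z,y)<\varepsilon/2$ and then $x\in X$ with $d(y,x)<\varepsilon/2$, so $d(z,x)<\varepsilon$. Lemmas \ref{lem_apoly}, \ref{lem_bpoly}, \ref{lem_mmkc}, and \ref{lem_cpoly} exhibit precisely the chain of dense inclusions
\[\mmk\subseteq\mmka\subseteq\mmkb\subseteq\mmkc\subseteq\mcomplmk,\]
all regarded as metric subspaces of $(\mcompl,\Dplain)$ in accordance with the convention stated at the beginning of Section \ref{sec_buildcompl}. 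Applying transitivity three times gives that $\mmk$ is dense in $\mcomplmk$.

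Next I would argue that this forces the completion of $\mmk$ to contain $\mcomplmk$. Let $\widehat{\mmk}$ denote the completion of $(\mmk,\Dplain)$, equipped with its canonical isometric embedding $\iota\colon\mmk\to\widehat{\mmk}$ with dense image. Since $\mmk$ is dense in $\mcomplmk$, every point of $\mcomplmk$ is the $\Dplain$-limit of a Cauchy sequence with terms in $\mmk$; such a sequence remains Cauchy after applying $\iota$, hence converges in the complete space $\widehat{\mmk}$, and the resulting correspondence is independent of the chosen sequence and preserves $\Dplain$. This yields an isometric embedding $\mcomplmk\hookrightarrow\widehat{\mmk}$, so the completion of $\mmk$ indeed contains an isometric copy of $\mcomplmk$, which is the assertion of the lemma.

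There is no real obstacle at this stage: all of the genuine work lives in the four preceding lemmas, each of which supplied an explicit geometric approximation (removing non-Delzant corners, rationalizing edge slopes, separating coincident vertical lines, exhausting non-compact polygons, and approximating an arbitrary convex set from the inside by rectangles while retaining the distinguished top-boundary vertices). The only point requiring care in the present argument is bookkeeping — one must keep in mind that throughout the chain the metric is the single function $\Dplain$ as extended to $\mcompl$, and not $\Didplain$, which fails the triangle inequality on $\mcompl$, so that "dense subspace" is unambiguous at every stage; this has already been arranged.
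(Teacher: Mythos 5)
Your argument is correct and matches the paper's, which treats Lemma \ref{lem_mdense} as an immediate consequence of Lemmas \ref{lem_apoly}, \ref{lem_bpoly}, \ref{lem_mmkc}, and \ref{lem_cpoly} by chaining the dense inclusions; you have simply made the transitivity-of-density and universal-property steps explicit. No gap.
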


In this section we have undergone several extensions of $\m$ to obtain $\mcompl$.
We see that the limits of elements of $\m$ are associated to sets which are convex
(up to measure zero corrections) and which are not required to be rational polygons.
It is possible for the $x$-components of the positions of the images of the focus-focus
points, which are usually required to be distinct, to be equal in the limit.  Also, it is possible
for the positions of the images of the focus-focus points, which must be in the interior of
the moment map image for semitoric systems, to limit to any point on the boundary.

\subsection{Cauchy sequences for \texorpdfstring{$\Dplain$ and $\Didplain$}{the different metrics}}\label{sec_cauchy}

In this section we investigate the relationship between Cauchy sequences in $\Didplain$ and $\Dpplain$.
This will be used to prove Lemma~\ref{lem_mcompliscompl}; that $\mcompl$ is complete.

\begin{lemma}\label{lem_cauchysub}
 Let $\melen\in\mcomplmk$ for $n=1,\ldots,\infty$. If $(\melen)_{n=1}^\infty$ is Cauchy with respect to $\Dplain$ then there exists a subsequence $(\mele_{n_i})_{i=1}^\infty$ which is Cauchy with respect to $\Didplain$.
\end{lemma}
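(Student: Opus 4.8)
The plan is to produce the required subsequence in two stages: a standard ``fast Cauchy'' extraction that records, for each consecutive pair, a minimizing appropriate permutation, followed by a pigeonhole step that controls how these permutations interact with the canonical ordering of critical points from Remark~\ref{rmk_order}.

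\textbf{Stage 1.} Since $(\melen)$ is $\Dplain$-Cauchy, I would first pass to a subsequence $(\mele_{n_i})_{i\ge1}$ with $\Dplain(\mele_{n_i},\mele_{n_{i+1}})<2^{-i}$, and for each $i$ choose an appropriate permutation $p_i$ realizing this minimum, so $d^{p_i}(\mele_{n_i},\mele_{n_{i+1}})<2^{-i}$. Setting $r_1=\mathrm{Id}$ and $r_{i+1}=p_i\circ r_i$, the index $r_i(j)$ ``tracks'' the critical point of $\mele_{n_i}$ that is matched, through the chain $p_1,\dots,p_{i-1}$, to the $j$-th critical point of $\mele_{n_1}$; by Lemma~\ref{lem_permkkpp} and the telescoping estimate in Lemma~\ref{lem_dpdqdpq}, each $r_i$ is again an appropriate permutation (absorbing the twisting shifts $T^{-c}$ built into $\distpolyp$). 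Because $d^{p_i}(\mele_{n_i},\mele_{n_{i+1}})$ is the sum over $j$ of $\dh(h^{n_i}_{r_i(j)},h^{n_{i+1}}_{r_{i+1}(j)})$ and $\dtsz\big((S^{n_i}_{r_i(j)})^\infty,(S^{n_{i+1}}_{r_{i+1}(j)})^\infty\big)$ together with a polygon term, and $\sum_i 2^{-i}<\infty$, all of these telescoped series converge. Hence, for each fixed $j$, the tracked sequences $(h^{n_i}_{r_i(j)})_i$, $(\la^{n_i}_{r_i(j)})_i$ and $((S^{n_i}_{r_i(j)})^\infty)_i$ are Cauchy in $[0,1]$, in $\R$, and in $(\rxyz,\dtsz)$ respectively, and — summing over $\vec u\in\{0,1\}^\mf$ and using the triangle inequality for the $\nu$-symmetric difference as in Proposition~\ref{prop_tkvolume} — the families $\big(t_{\vec u}(\De^{n_i})\big)_i$ are Cauchy for the $\nu$-symmetric difference.

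\textbf{Stage 2.} For each $i$ the canonical ordering of the critical points of $\mele_{n_i}$ differs from the tracking $r_i$ by some element $\sigma_i\in\perm$; since $\perm$ is finite I would pass to a further subsequence on which $\sigma_i\equiv\sigma$ is constant. Along this subsequence, for each $a$, the $a$-th critical point of $\mele_{n_i}$ in canonical order carries exactly the tracked data with index $r_i(\sigma(a))$, which by Stage~1 is Cauchy in $i$ in every coordinate; combined with the polygon estimate this gives $\Didplain(\mele_{n_i},\mele_{n_{i'}})\to0$ as $i,i'\to\infty$, i.e.\ the subsequence is $\Didplain$-Cauchy, which is what is required.

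\textbf{Main obstacle.} The hard part will be the polygon contribution in Stage~2. Although the tracked polygons $t_{\vec u}(\De^{n_i})$ are $\nu$-close for large $i$, the cutting maps $t_{\vec u}$ used to \emph{compute} $\Didplain(\mele_{n_i},\mele_{n_{i'}})$ cut along the vertical lines $\ell_{\la^{n_i}_\bullet}$ and $\ell_{\la^{n_{i'}}_\bullet}$, which sit at slightly different positions, and — when several of the limiting $\la$-values coincide — the canonical labelling of these cuts is a nontrivial (though, after Stage~2, constant) permutation of the tracked labelling. One therefore has to show that the $\nu$-symmetric difference is continuous under small simultaneous perturbations of a convex polygon and of the vertical lines along which it is cut; I expect this to follow from the same wedge estimates used in the proof of Lemma~\ref{lem_laconvg}, using that the $\la$-perturbations tend to $0$ and that an admissible measure assigns finite mass to the relevant regions (cf.\ Lemma~\ref{lem_convexinterval}). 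The remaining bookkeeping — that composites of the $p_i$ and of the shifts $T^{-c_i}$ stay among the appropriate permutations, and the harmless re-sorting to the canonical representative of Remark~\ref{rmk_order} — is routine.
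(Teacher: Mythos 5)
Your proposal is essentially correct but follows a genuinely different route from the paper, and the ``Main obstacle'' you flag at the end is a phantom created by an unnecessary detour.

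\textbf{Comparison with the paper.} The paper does not use a fast-Cauchy extraction. Instead it runs a nested pigeonhole directly on the index set: at stage $n$ it picks $M_n\in A_{n-1}$ large enough that all pairs past $M_n$ are $\varepsilon_n/2$-close in $\Dplain$, then pigeonholes over the finite set $\perm$ to find a single permutation $p_n$ for which $\Dpplain(\mele_{M_n},\mele_l)<\varepsilon_n/2$ holds for infinitely many $l\in A_{n-1}$, and sets $A_n=(A_{n-1}\cap[0,M_n])\cup\mathcal{B}^n_{p_n}$. The final subsequence is $A=\bigcap_n A_n$, which is infinite because it contains $\{M_n\}_n$. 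Two tail elements $k,l>M_n$ in $A$ are then each compared to the pivot $\mele_{M_n}$ via the \emph{same} permutation $p_n$, and Lemma~\ref{lem_dpdqdpq} with $q=p_n^{-1}$ and $p=\mathrm{Id}$ gives $\Didplain(\mele_k,\mele_l)<\varepsilon_n$. Your route — choose a minimizing $p_i$ on each consecutive pair of a $2^{-i}$-fast subsequence, compose to $r_i=p_{i-1}\circ\cdots\circ p_1$, then pigeonhole once on the finite set $\perm$ to freeze $r_i\equiv r$ — is a valid alternative and is conceptually tidier: one pigeonhole instead of a nested family.

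\textbf{Where you over-complicate.} Once $r_i\equiv r$ on the further subsequence, the telescoped estimate from Lemma~\ref{lem_dpdqdpq} already gives, for $i<i'$ in that subsequence,
\[
\Didplain(\mele_{n_i},\mele_{n_{i'}})
= d^{\,r_{i'}\circ r_i^{-1}}(\mele_{n_i},\mele_{n_{i'}})
\leq \sum_{k=i}^{i'-1} d^{\,p_k}(\mele_{n_k},\mele_{n_{k+1}})
< 2^{-i+1},
\]
\emph{as a single inequality for the full metric}, polygon term included, with the cutting lines $\ell_{\la^{n_i}_\bullet}$ and $\ell_{\la^{n_{i'}}_\bullet}$ sitting exactly where they are. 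There is therefore nothing to reassemble and no continuity-of-cutting statement to prove; the ``Main obstacle'' only arises because you first split $d^{p_i}$ into coordinate-wise pieces (the tracked $h$, $\la$, Taylor, and polygon sequences) and then try to recombine them to bound $\Didplain$. Dropping that decomposition makes Stage~2 a one-line consequence of the telescoping. (As a side remark, the Cauchyness of $(\la^{n_i}_{r_i(j)})_i$ that you assert in Stage~1 does not follow from the metric estimate alone — it needs the wedge argument of Lemma~\ref{lem_laconvg}, which is stated for $\m$ rather than $\mcomplmk$ — but since that claim is not needed in the streamlined version of your argument, this is moot.)
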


\begin{proof}
 Let $(\melen)_{n=1}^\infty$ be as in the statement of the Lemma.  Let $A_0=\N$ and let $M_0 = 0$.  We will define $A_n$ and $M_n$ recursively for each $n\in\N$.  Suppose that $\abs{A_{n-1}}=\infty$ and $M_{n-1}\in A_{n-1}$. Let $\varepsilon_n = 2^{-n}$. Find some $M>0$ such that $k,l>M$ implies that $\Dplain(\mele_k,\mele_l)>\nicefrac{\varepsilon_n}{2}$. Now let $M_n$ be any element of $A_{n-1}$ which is greater than $M$ and $M_{n-1}$.  This means $\Dplain (\mele_{M_n},\mele_l) <\nicefrac{\varepsilon_n}{2}$ for any $l>M_n$.  For $p\in\perm$ let $\mathcal{B}_p^n = \{ l\in A_{n-1}\mid l>M_n, \Dpplain (\mele_{M_n}, \mele_l)< \nicefrac{\varepsilon_n}{2}\}.$ Notice that $\cap_{p\in\perm} \mathcal{B}_p^n = \{l\in A_{n-1} \mid l>M_n\}$ by the definition of $M_n$.  The union of this finite number of sets has infinite cardinality so at least one of those sets must also have infinite cardinality.  Choose any $p_n\in\perm$ such that $\abs{\mathcal{B}_{p_n}^n}=\infty$ (there may be several possible choices). Now define $A_n = (A_{n-1} \cap [0,M_n])\cup \mathcal{B}_{p_n}^n.$  Notice that $M_n\in A_n$ and $\abs{A_n}=\infty$.
 
 Now let $A= \cap_{n\in\N}A_n$ and notice that $\abs{A}=\infty$ because $\{M_n\mid n\in\N\}\subset A$.
 
 So $(\mele_{a})_{a\in A}$ is a subsequence of $(\mele_n)_{n=1}^\infty$.  We will show that this subsequence is Cauchy with respect to $\Didplain$.  Fix any $\varepsilon>0$ and find $n\in\N$ such that $\varepsilon_n<\varepsilon$. Now pick any $k,l>M_n$ with $k,l\in A$.  Then $k,l\in A_n$ implies that $k,l \in \mathcal{S}_{p_n}^n$ so $\Dpnplain (\mele_{M_n}, \mele_k), \Dpnplain (\mele_{M_n}, \mele_l)<\nicefrac{\varepsilon_n}{2}<\nicefrac{\varepsilon}{2}$.  Also notice that $p_n$ being an appropriate permutation to compare $\mele_k$ with $\mele_{M_n}$ and also appropriate to compare $\mele_l$ with $\mele_{M_n}$ implies that $\text{Id}\in\perm$ is an appropriate permutation to compare $\mele_k$ and $\mele_l$. Thus \[\Didplain (\mele_k, \mele_l) \leq \Dpnplain (\mele_k, \mele_{M_n}) + \Dpnplain (\mele_l, \mele_{M_n}) < \varepsilon\] by Lemma \ref{lem_dpdqdpq}.
\end{proof}

\begin{lemma}\label{lem_cauchyidconvg}
 Suppose that $(\melen)_{n=1}^\infty$ is a sequence of elements of $\mcomplmk$ which is Cauchy with respect to the function $\Didplain$.  Then there exists some $p\in\perm$ and $m\in\mcomplmk$ such that \[\lim_{n\to\infty} \Dpplain (m_n,m) = 0.\]
\end{lemma}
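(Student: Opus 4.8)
The plan is to treat the three kinds of invariants separately, assemble a candidate limit $m$, read off the permutation $p$ from the way the vertical-line labels $\la_j$ collapse in the limit, and then verify $\Dpplain(\melen,m)\to 0$. Since $\Didplain$ is a sum of nonnegative pieces --- the term $\distpolyidplain$ on the labeled weighted sets together with $\sum_{j=1}^\mf\dtsz$ and $\sum_{j=1}^\mf\dh$ --- the hypothesis that $(\melen)_{n=1}^\infty$ is $\Didplain$-Cauchy forces each piece to be Cauchy: for every $j$ the sequence of $j$-th volume invariants is Cauchy in $([0,1],\dh)$, the sequence of $j$-th Taylor series is Cauchy in $(\rxyz,\dtsz)$, and the sequence of labeled weighted sets is Cauchy for the $\nu$-symmetric-difference. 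As $[0,1]$ is complete and $(\rxyz,\dtsz)$ is complete by Proposition~\ref{prop_rxyz}, we obtain limits $\hat h_j\in[0,1]$ and $(\hat S_j)^\infty\in\rxyz$ for each $j$. All of the remaining work concerns the sets, and this is where the arguments of~\cite{PePRS2013} are used.

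Write $A^n$ for the convex set underlying $\melen$ and $\la_1^n\le\cdots\le\la_\mf^n$ for its labels; since the $\melen$ share an equivalence class of twisting indices and $\Didplain$-distances are eventually finite, we may normalize so that all $\melen$ carry the same twisting-index representative. Pass to a subsequence along which $\la_j^n\to\la_j\in[-\infty,+\infty]$ for every $j$ (the interval is compact), so that $\la_1\le\cdots\le\la_\mf$. A Cauchy sequence is bounded, hence $\nu(A^n)$ is bounded; using this together with the vertical-translation invariance of $\nu$ and the fact that $t_{\vec u}$ shears everything to the right of the relevant line $\ell_{\la_j^n}$, one shows that $\la_j^n\to-\infty$ would either contradict the Cauchy condition or force $\nu(A^n)\to 0$, in which case the sequence converges to the distinguished empty-set element of $\polycompl$ and we are done; so we may assume each $\la_j\in\R\cup\{+\infty\}$. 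For each fixed $\vec u\in\{0,1\}^\mf$ the sheared sets $t_{\vec u}(A^n)$ (built from the labels $\la^n$) form a $\nu$-Cauchy sequence, so the indicator functions $\chi_{t_{\vec u}(A^n)}$ are Cauchy, hence convergent, in $L^1(\nu)$; passing to a further subsequence this convergence is $\nu$-almost everywhere, and since $\nu$ and Lebesgue measure have the same null sets the argument of~\cite{PePRS2013} applies to show the limit is $\chi_{B_{\vec u}}$ for a set $B_{\vec u}$ that is convex up to measure zero, with $0\le\nu(B_{\vec u})<\infty$. Put $A:=B_{\vec 0}=\lim_n A^n$. If $\nu(A)=0$ the sequence again converges to the empty-set element; otherwise the decay $xg\in L^1(\mu,\R)$ makes the far-left and far-right $\nu$-tails of the $A^n$ uniformly small (which is exactly what renders the cases $\la_j^n\to\pm\infty$ harmless), and from this one checks that the shear maps depend continuously enough on their label parameters to conclude $B_{\vec u}=t_{\vec u}(A)$ for every $\vec u$, the shears now built from the limiting labels $\la_j$. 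Hence $[\Aw]:=[(A,(\ell_{\la_j},+1,k_j)_{j=1}^\mf)]$ lies in $\polycompl$.

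To finish, choose $p\in\perm$ that preserves each ``block'' $\{j:\la_j=\la_*\}$ of indices sharing a common finite limiting label (and is the identity elsewhere) and, within each block, arranges the pairs $(\hat h_j,(\hat S_j)^\infty)$ into the lexicographic order of Remark~\ref{rmk_order}; let $m\in\mcomplmk$ be the element with labeled-weighted-set datum $[\Aw]$, whose $p(j)$-th volume invariant and Taylor series are $\hat h_j$ and $(\hat S_j)^\infty$, and whose twisting index is the $p$-permutation of the common one of the $\melen$. Because $p$ only permutes indices with a common limiting label, and the composition of the shears along a single vertical line $\ell_{\la_*}$ depends only on the sum of their exponents over the block, we have $t_{\vec u}(A)=t_{p(\vec u)}(A)$ for the limiting labels; thus $t_{\vec u}(A^n)\to t_{p(\vec u)}(A)$ for each $\vec u$, while the volume and Taylor-series parts of $\Dpplain(\melen,m)$ are exactly $\dh$ of the $j$-th volume invariant of $\melen$ with $\hat h_j$ and $\dtsz$ of the $j$-th Taylor series of $\melen$ with $(\hat S_j)^\infty$. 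All three tend to $0$, so $\Dpplain(\mele_{n_i},m)\to 0$ along the chosen subsequence. Finally, Lemma~\ref{lem_dpdqdpq}, with one permutation taken to be the identity, gives $\Dpplain(\melen,m)\le\Didplain(\melen,\mele_{n_i})+\Dpplain(\mele_{n_i},m)$; letting $n$ and then $n_i$ tend to infinity (the original sequence is $\Didplain$-Cauchy) upgrades this to $\Dpplain(\melen,m)\to 0$.

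I expect the main obstacle to be the set part above: proving that a $\nu$-symmetric-difference Cauchy family of labeled weighted convex sets converges to one of the form $(t_{\vec u}(A))_{\vec u\in\{0,1\}^\mf}$ with $A$ convex modulo measure zero and with the label constraints $\min_{s}\pi_1(s)\le\la_1\le\cdots\le\la_\mf\le\max_{s}\pi_1(s)$ surviving in the limit --- in particular controlling the behaviour of the $\la_j^n$ (ruling out escape to $-\infty$, and handling escape to $+\infty$) and verifying the compatibility $B_{\vec u}=t_{\vec u}(A)$ simultaneously across all $2^\mf$ shears. The purely one-polygon fact that an $L^1$-limit of indicators of convex sets is again, almost everywhere, the indicator of a convex set can be quoted from~\cite{PePRS2013}; the new content is this simultaneous, label-dependent bookkeeping together with the role played by the tail decay of the admissible measure.
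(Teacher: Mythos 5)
Your plan is essentially the paper's proof. Both arguments reduce the set part to completeness of $L^1(\nu)$ to obtain, for each $\vec u\in\{0,1\}^\mf$, a limit set that is convex up to measure zero; both then have to extract limiting label values $\La_j$ and verify that the $2^\mf$ limiting sets are compatible, i.e.\ of the form $t_{\vec u}(A)$; both allow $\La_j=+\infty$ as an endpoint case; and both fix the ordering at the very end by a permutation $p$. Your organizational differences are genuine but minor: you dispatch the $h_j$ and Taylor-series parts up front via completeness of $[0,1]$ and $(\rxyz,\dtsz)$; you pass to a subsequence along which $\la_j^n$ converges in the extended reals and then argue that $-\infty$ cannot occur, whereas the paper instead proves directly (by sandwiching a definite rectangle of $\nu$-measure $\gtrsim\min\{\de_1,\de_2,|\la_j^{k_n}-\la_j^{k_m}|\}$ inside the symmetric difference of the sheared sets) that the relevant sequence of $\la_j$'s is Cauchy; and your final triangle-inequality step invoking Lemma~\ref{lem_dpdqdpq} with $q=\mathrm{Id}$ to upgrade subsequential convergence to full-sequence convergence is a clean explicit version of something the paper leaves implicit. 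Be aware, though, that the two points you flag as "one shows that $\la_j^n\to-\infty$ contradicts Cauchyness" and "one checks $B_{\vec u}=t_{\vec u}(A)$" are precisely where the paper's rectangle estimate does all the quantitative work; without some version of that estimate (or an equivalent vertical-translation-invariance argument showing that two shears along lines a fixed horizontal distance apart must produce a symmetric difference of definite measure on a converging convex family), neither assertion follows from tail decay of the admissible measure alone. So fill in the rectangle computation --- it is the nontrivial content of the set part, and it simultaneously rules out $-\infty$ and proves convergence of the $\la_j$'s.
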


\begin{proof}
 For $A,B\subset\R^2$ say $A\simeq B$ if and only if $\nu(A\symdiff B) = 0$ and let $\mathcal{F}$ denote the subsets of $\R^2$ with finite $\nu$-measure modulo $\simeq$. Now let $\mathcal{E}=\{[A]\in\mathcal{F} \mid \text{there exists } B\in[A]\text{ which is convex}\}$ and let $d_\mathcal{E}$ be the metric on this space given by the $\nu$-measure of the symmetric difference.  For simplicity we will write $A\in\mathcal{E}$ instead of $[A]\in\mathcal{E}$. We will show that this metric space is complete. Let $\chi_A$ denote the characteristic function of the set $A\in\mathcal{E}$.  Then for $A,B\in\mathcal{E}$ we can see that \[d_\mathcal{E} (A,B) = \int_{\R^2} \abs{\chi_A - \chi_B}d\nu = \norm{\chi_A - \chi_B}_{L^1}\] the $L^1$ norm on $(\R^2, \nu).$ Now suppose that $(A^k)_{k=1}^\infty$ is a Cauchy sequence in $(\mathcal{E},d_\mathcal{E})$ and by measure zero adjustments we can assume that each $A^k$ is convex. Then $(\chi_{A^k})_{k=1}^\infty$ is Cauchy in $L^1(\R^2, \nu)$ and thus there must exist some function $g:\R^2\to\R$ defined up to measure zero such that \[\lim_{k\to\infty} \norm{g-\chi_{A^k}}_{L^1} = 0\] because $L^1$ is complete. 
 
 The functions $(\chi_{A^k})_{k=1}^\infty$ converge to $g$ in $L^1$ 
 so we know that there is a subsequence $(\chi_{A^{k_n}})_{n=1}^\infty$
 which converges to $g$ pointwise off of some measure zero set $S$.  Let
 \[
  A = \{x\in\R^2\setminus S \mid g(x)=1\}
 \]
 and now we will show that $A$ is almost everywhere equal to a convex set so $\mathcal{E}$ is complete. Let $A'$ be the convex hull of $A$ and we will show that $\nu(A\symdiff A')=0$.  Let $p\in A'$ which means there exists $q,r\in A$ and $t\in [0,1]$ such that $p=(1-t)q+tr$.  Since the subsequence $(\chi_{A^{k_n}})_{n=1}^\infty$ converges pointwise to $\chi_A$ at the points $q$ and $r$ (since $q,r\in A$ and $A$ is disjoint from $S$) this means that there exists some $N>0$ such that $n>N$ implies $q,r\in A^{k_n}$.  Thus, since each $A^k$ is convex we see that for $n>N$ we have $p\in A^{k_n}$.  We conclude that $p\in A \cup S$ and thus $A\symdiff A'\subset S$ so $\nu(A\symdiff A')=0$.  Also notice $\nu(A^k, A) \to 0$ as $k\to\infty$ implies that $\nu(A)<\infty$. This means $A\in\mathcal{E}$ so $(\mathcal{E}, d_\mathcal{E})$ is a complete metric space.
 
 Let $([A_w^k])_{k=1}^\infty$ be a Cauchy sequence in $(\polycompl,\distpolyid)$. Let \[ [A_w^k] = [(A^k,(\ell_{\la_j^k}, +1, k_j)_{j=1}^\mf)] \text{ and let } \akep = t_{\vec{u}}^k (A^k) \] for each $\vec{\varepsilon}\in\{-1,1\}^\mf$ with $u_j = \frac{1-\ep_j}{2}$.  Since this sequence is Cauchy we also know that the sequence $(\akep)_{k=1}^\infty$ is a Cauchy sequence in $(\mathcal{E},d_\mathcal{E}).$  Thus for each $\vec{\varepsilon}\in\{-1,1\}^\mf$ there exists some convex $A_{\vec{\ep}}\in\mathcal{E}$ which is the limit of $(\akep)_{k=1}^\infty$ in $\mathcal{E}$. Let $A=A_{(1,\ldots,1)}$. We have produced a family of convex, $\nu$-finite sets which could be the limit, but we still need to check that there is some choice of $(\La_j)_{j=1}^\mf$ such that $A_{\vec{\ep}}=t_{\vec{u}}(A_0)$ in $\mathcal{E}$ for each $j=1,\ldots,\mf$.
 
 Fix some $j\in\{1,\ldots,\mf\}$ and let $A_j = A_{\ep}$ where $\ep_j=-1$ and $\ep_i=+1$ for $i\neq j$ and let $t_k$ denote $t^{1}_{\la_j^k}$.  Since $\nu$ is invariant under vertical translations we have that  \[ d_\mathcal{E} (t_k (A), t_{\vec{u}}(A^k)) = d_\mathcal{E} (A, A^k)\] so both go to zero as $k\to\infty$.  By the triangle inequality we can see that \[ d_\mathcal{E} (t_k(A), A_j) \leq d_\mathcal{E} (t_k(A), t_k(A^k))+ d_\mathcal{E} (t_k(A^k), A_j)\] so we conclude that
 \begin{equation}\label{eqn_dEtozero}
  d_\mathcal{E} (t_k(A), A_j)\to 0 \text{ as } k\to\infty.
 \end{equation}
 
 If $(\la_j^k)_{k=1}^\infty$ diverges to $+\infty$ or converges to $\sup (\pi_1 (A))$ then we are done.  This is because in this case $d_\mathcal{E}(t^k(A), A_j)\to 0$ as $k\to\infty$ implies that $A$ and $A_j$ represent the same element in $\mathcal{E}$ (i.e. they are equal almost everywhere) and $t^\La$ acts as the identity on $A_0$ if $\La$ is the rightmost value of $A_0$.
 
 \begin{figure}
 \centering
 \includegraphics[height=210pt]{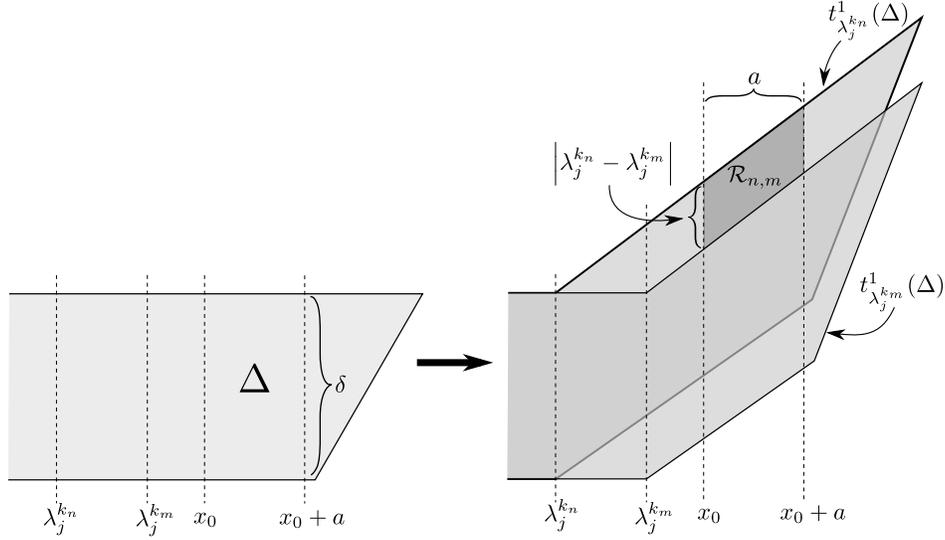}
 \caption{The action of $t^1_{\la_j^{k_m}}$ and $t^1_{\la_j^{k_n}}$ on a polygon. It can be seen that $\mathcal{R}_{n,m}$ is a subset of the symmetric difference and has measure which is nonzero if $\abs{\la_j^{k_n}-\la_j^{k_m}}\neq 0$.}
 \label{fig_cauchyidconvg1}
\end{figure}
 
Otherwise we can find some $x_0, a\in\R$ with $a>0$ such that 
$[x_0, x_0+2a] \subset \pi_1(A)$ and there exists a subsequence
$(\la_j^{k_n})_{n=1}^\infty$ such that $\la_j^{k_n}<x_0$ for all $n$.
Notice that $A\cap \ell_{x}$ is an interval for any $x\in\pi_1(A)$ 
because $A$ is convex.  Let $\de_1 = \text{length}(A \cap \ell_{x_0})$
and $\de_2 = \text{length}(A \cap \ell_{x_0+a})$ and notice that 
$\de_1, \de_2<\infty$ because otherwise we would have $\nu(A)=\infty$
or $\nu(A)=0$ because $A$ is convex and $\nu$ is invariant under 
vertical translations.  Also notice that 
$\text{length}(A \cap \ell_x) \geq \min\{\de_1, \de_2\}$ for any 
$x\in[x_0,x_0+a]$ because $A$ is convex.  Pick any $n,m\in\N$ and
we can see that $t^1_{\la_j^{k_n}}$ and $t^1_{\la_j^{k_m}}$ only 
differ by a vertical translation when acting on 
$A \cap \pi_1^{-1} ([x_0,x_0+a])$ 
(see Figure \ref{fig_cauchyidconvg1}).  This guarantees that there 
is a region $\mathcal{R}_{n,m}$ in the symmetric difference 
$t^{\la_j^{k_n}}(A)\symdiff t^{\la_j^{k_m}}(A)$ which has the same
measure as a rectangle of length $a$ and height 
$\min \{\de_1, \de_2, \abs{\la_j^{k_n}-\la_j^{k_m}}\}$ positioned 
between the $x$-values of $x_0$ and $x_0+a$ (since $\nu$ is 
translation invariant). 
If $R$ stands for the measure of a rectangle from $x=x_0$ 
to $x=x_0+a$ of unit height we can see that 
\[
 \nu(\mathcal{R}_{n,m})=\min\left\{\de_1, \de_2, \abs{\la_j^{k_n}-\la_j^{k_m}}\right\} R
\]
and since $\mathcal{R}_{n,m}\subset t^{\la_j^{k_n}}(A)\symdiff t^{\la_j^{k_m}(A)}$ 
we know that
 \begin{equation}\label{eqn_rectmeas}
 \min\{\de_1, \de_2, \abs{\la_j^{k_n}-\la_j^{k_m}}\} R\leq \nu(t_{\la_j^{k_n}}(A)\symdiff t_{\la_j^{k_m}}(A)).
 \end{equation}
 The right side of Equation \eqref{eqn_rectmeas} is Cauchy with respect to $m$ and $n$ 
 because $(t_{\la_j^{k_n}})_{n=1}^\infty$ converges by Equation \eqref{eqn_dEtozero} and thus 
 the left side is Cauchy as well.  This means that $(\la_j^{k_n})_{n=1}^\infty$ is a Cauchy sequence of real numbers and 
 thus must converge.  Call its limit $\La_j \in\R$.  To complete the proof we must 
 only show that $\nu(t^1_{\La_j} (A) \symdiff A_j) = 0$.  This is clear because 
 \[\nu(t^1_{\La_j} (A) \symdiff A_j) \leq \nu(t^1_{\La_j} (A) \symdiff t^1_{\la_j^{k_n}}(A)) + \nu( t^1_{\la_j^{k_n}}(A) \symdiff A_j)\] 
 and the right side goes to zero as $n\to\infty$.  So we conclude that the original Cauchy sequence 
 converges to $ [(A, (\ell_{\La_j}, +1, k_j)_{j=1}^\mf)]$.  Clearly the elements of each copy 
 of $\rxyz$ and $[0,1]$ can be made to converge.  The only problem is that 
 possibly this limit does not have the critical points labeled in the correct order 
 according to Remark \ref{rmk_order} to be an element of $\polycompl$ so we reorder it by some permutation $p\in\perm$ and the result follows.
\end{proof}

\subsection{\texorpdfstring{$\mcompl$}{BLAH} is complete}\label{sec_complcompl}

\begin{lemma}\label{lem_mcompliscompl}
 $\mcomplmk$ is complete.
\end{lemma}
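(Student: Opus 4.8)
The plan is to assemble the lemma from the two Cauchy-sequence lemmas just proved, together with the elementary fact that a Cauchy sequence which has a convergent subsequence is itself convergent (to the same limit). The substantive analysis has already been done: Lemma \ref{lem_cauchysub} lets us pass from a $\Dplain$-Cauchy sequence to a $\Didplain$-Cauchy subsequence, and Lemma \ref{lem_cauchyidconvg} produces an actual limit for a $\Didplain$-Cauchy sequence. So completeness of $\mcomplmk$ is essentially a two-step corollary.

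First I would take an arbitrary sequence $(\melen)_{n=1}^\infty$ in $\mcomplmk$ that is Cauchy with respect to $\Dplain$. By Lemma \ref{lem_cauchysub} there is a subsequence $(\mele_{n_i})_{i=1}^\infty$ which is Cauchy with respect to $\Didplain$. Applying Lemma \ref{lem_cauchyidconvg} to this subsequence yields a permutation $p\in\perm$ and an element $m\in\mcomplmk$ with $\lim_{i\to\infty}\Dpplain(\mele_{n_i},m)=0$. Since $p$ is appropriate for the twisting indices of the $\mele_{n_i}$ and of $m$ (all of which are equivalent to $\vec{k}$), we have $\Dplain(\mele_{n_i},m)=\min_{q\in\permkkp}\Dqplain(\mele_{n_i},m)\le\Dpplain(\mele_{n_i},m)$, and therefore $\Dplain(\mele_{n_i},m)\to0$ as $i\to\infty$: the chosen subsequence converges to $m$ with respect to $\Dplain$.

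Finally I would invoke the triangle inequality for $\Dplain$ on $\mcomplmk$, which holds by the proof of Proposition \ref{prop_metric} extended to $\mcompl$ (as noted after the extended definition of $\Dplain$), to upgrade subsequential convergence to convergence of the whole sequence: given $\varepsilon>0$, choose $N$ so that $k,l>N$ implies $\Dplain(\mele_k,\mele_l)<\varepsilon/2$, and choose $i$ with $n_i>N$ and $\Dplain(\mele_{n_i},m)<\varepsilon/2$; then $\Dplain(\mele_k,m)\le\Dplain(\mele_k,\mele_{n_i})+\Dplain(\mele_{n_i},m)<\varepsilon$ for all $k>N$. Hence $(\melen)$ converges to $m\in\mcomplmk$, and $\mcomplmk$ is complete. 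I do not anticipate any genuine obstacle in this argument; the only points requiring a line of care are that $\Dplain\le\Dpplain$ (so $\Dpplain$-convergence of the subsequence descends to $\Dplain$-convergence) and that the triangle inequality is in force on $\mcomplmk$, both of which are immediate from earlier results.
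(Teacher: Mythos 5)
Your proposal is correct and follows exactly the same route as the paper: pass to a $\Didplain$-Cauchy subsequence via Lemma \ref{lem_cauchysub}, extract a $\Dpplain$-limit via Lemma \ref{lem_cauchyidconvg}, observe $\Dplain\le\Dpplain$, and finish with the standard fact that a Cauchy sequence with a convergent subsequence converges. The paper simply states this more tersely; your version spells out the final triangle-inequality step, which the paper leaves implicit.
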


\begin{proof}  Any Cauchy sequence in $\mcomplmk$ must have a subsequence which is Cauchy with respect to $\Didplain$ 
by Lemma \ref{lem_cauchysub}.  By Lemma \ref{lem_cauchyidconvg} that sequence must converge with respect to $\Dpplain$ 
for some fixed $p\in\perm$, which in particular means that it must converge with respect to $\Dplain$.  A Cauchy 
sequence with a subsequence which converges must converge.
\end{proof}

Now Lemma \ref{lem_mdense} and Lemma \ref{lem_mcompliscompl} imply the main result of this section.

\begin{prop}\label{prop_complete}
 Given an admissible measure $\nu$ and a linear summable sequence the completion of $(\m,\Dnomk)$ is $(\mcompl,\Dnomk)$.
\end{prop}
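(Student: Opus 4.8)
The plan is to reduce the assertion about all of $\m$ to the corresponding assertion about a single component $\mmk$, on which the two halves of the argument are already in place.

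First I would record the ``block structure'' of the metric. By Definition \ref{def_fullmetric} any two lists of ingredients that do not lie in a common $\mmk$ are at distance exactly $1$, so each $\mmk$ is clopen in $(\m,\Dnomk)$; consequently a Cauchy sequence in $\m$ is, from some index onwards, contained in a single component $\mmk$. The same is true of $\mcompl=\bigcup_{\mf,\vec k}\mcomplmk$ once $\Dnomk$ is extended to it by assigning the value $1$ between incomparable pieces (exactly as in Definition \ref{def_fullmetric}): each $\mcomplmk$ is clopen in $\mcompl$. By Remark \ref{rmk_msubsetmcompl} the natural map $\m\hookrightarrow\mcompl$ is an isometric embedding that carries $\mmk$ into $\mcomplmk$, and the ordering convention of Remark \ref{rmk_order} guarantees that $\mcompl$ is a well-defined set, so that limits in it are unique. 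Hence it suffices to prove, for each fixed $\mf\in\Z_{\geq0}$ and $\vec k\in\Z^\mf$, that $(\mcomplmk,\D)$ is the completion of $(\mmk,\D)$.

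For a fixed component this is immediate from what has already been proved. The chain of dense inclusions $\mmk\subset\mmka\subset\mmkb\subset\mmkc\subset\mcomplmk$ from Lemmas \ref{lem_apoly}, \ref{lem_bpoly}, \ref{lem_mmkc} and \ref{lem_cpoly} (collected in Lemma \ref{lem_mdense}) shows that $\mmk$ is dense in $\mcomplmk$, while Lemma \ref{lem_mcompliscompl} shows that $(\mcomplmk,\D)$ is complete. A dense subspace of a complete metric space has that space as its completion, and completions are unique up to isometry, so $(\mcomplmk,\D)$ is the completion of $(\mmk,\D)$.

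Finally I would glue the components back together. Any Cauchy sequence in $\m$ eventually lies in one $\mmk$ and therefore converges in $\mcomplmk\subset\mcompl$, so $\mcompl$ contains all limits of Cauchy sequences from $\m$; conversely $\m$ is dense in $\mcompl$ since each $\mmk$ is dense in the corresponding $\mcomplmk$ and the components are clopen; and $\mcompl$ is itself complete because a Cauchy sequence there is eventually confined to a single complete component $\mcomplmk$. Hence $(\mcompl,\Dnomk)$ is the completion of $(\m,\Dnomk)$. The only delicate points are bookkeeping ones --- that the extension of $\Dnomk$ to $\mcompl$ really does keep distinct components a fixed distance apart, and that $\mcompl$ is well defined as a set (Remark \ref{rmk_order}) --- and these are handled by the definitions; the substantive content is entirely contained in Lemmas \ref{lem_mdense} and \ref{lem_mcompliscompl}. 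I do not expect a genuine obstacle here, only the need to be careful that a ``disjoint union of completions'' is literally a completion when the pieces sit at a fixed positive distance from one another.
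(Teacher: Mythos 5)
Your argument is correct and is essentially the paper's own proof, spelled out in slightly more detail: the paper simply observes that Lemma \ref{lem_mdense} (density of $\mmk$ in $\mcomplmk$) and Lemma \ref{lem_mcompliscompl} (completeness of $\mcomplmk$) together give the result, which is exactly the core of your argument. The extra bookkeeping you provide about the clopen component decomposition and the isometric embedding $\m\hookrightarrow\mcompl$ is sound and is implicitly what lets the paper pass from the per-component lemmas to the global statement.
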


The reason to use $\Dplain$ instead of $\Didplain$ can be seen by the 
examining structure of the completion, as can be seen
in the following example.
\begin{example}\label{ex_needmin}
 Let 
\[
 [\De_w^l]=\left\{\begin{array}{ll} \,[(\De_l, (\la_1=0,\ep_1=1,k_1=0), (\la_2=l,\ep_2=1,k_2=0))]&\text{ if }l>0\\ \,[(\De_l, (\la_1=l,\ep_1=1,k_1=0), (\la_2=0,\ep_2=1,k_2=0))]&\text{ if }l<0\end{array}\right.
\]
and suppose that $m_l\in\m$ is a system given by 
\[
 m_l = \left\{\begin{array}{ll}([\De_w^l],( (S_1)^\infty, h_1),( (S_2)^\infty, h_2)) & \text{ if }l>0\\([\De_w^l],( (S_2)^\infty, h_2),( (S_1)^\infty, h_1)) & \text{ if }l<0\end{array}\right.
\]
for $l\in[-1,1]\setminus\{0\}$ such that $\lim_{l\to0}m_l$ exists in 
$(\mcompl,\Dplain)$. This can be thought of as one of the critical points 
being fixed and the other passing over it at $l=0$ as is shown in 
Figure~\ref{fig_needmin}.  The complications in defining this come from the 
fact that the order of the critical points switches at $l=0$ so the labeling
has to switch. Now we can see the problem with using $\Didplain$:
the metric should reflect the fact that these systems are approaching the same
limiting system, so we should have $\lim_{l\to0^+}m_l=\lim_{l\to0^-}m_l$
(which is true in the topology induced by $\Dplain$), but $\lim_{l\to0^+}m_l\neq\lim_{l\to0^-}m_l$ with respect to $\Didplain$.

\begin{figure}[hb!]
 \centering
 \includegraphics[height=120pt]{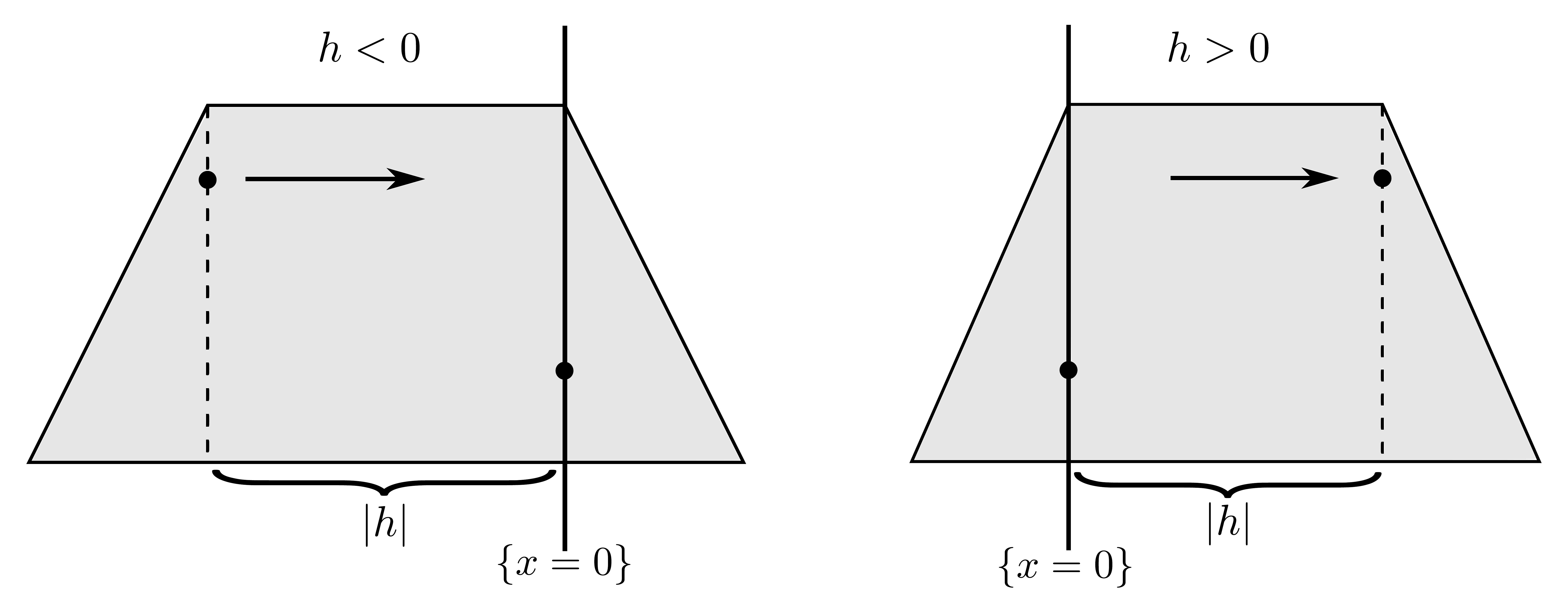}
 \caption{A continuous family in $\mcompl$ in which one critical point 
          passes over the other as $h$ increases from negative to positive.}
 \label{fig_needmin}
\end{figure}

\end{example}

Finally, Theorem \ref{thm_main} is produced by combining Proposition \ref{prop_metric},
Corollary \ref{cor_doesnotdepend}, and Proposition \ref{prop_complete}.

\section{Further questions}\label{sec_quest}

Now that we have defined a metric, and in particular a topology, on 
$\semitoric$ there are several questions that would be natural to 
address.  First of all, one may be interested extending the metric
defined in this paper in the way that this paper has extended the 
metric from~\cite{PePRS2013}.  To produce such an extension to a 
larger class of integrable systems one would first have to classify
those systems with invariants in a way which extends the Pelayo-V\~{u} Ng\d{o}c classification
from~\cite{PeVNsemitoricinvt2009, PeVNconstruct2011}.  
Also, one can now ask what are the connected components of $\semitoric$.
Furthermore, with a topology on $\semitoric$ we can consider 
Problem~2.45 from~\cite{PeVNfirst2012}, which asks what the closure of
the set of semitoric integrable systems would be when considered as a 
subset of $C^\infty (M, \R^2)$.  To address this problem an appropriate 
topology on $C^\infty (M, \R^2)$ would have to be defined.  This situation
is much more general than the systems which are the focus of this paper 
so it may be best to study metrics constructed in a more general case such
as in~\cite{Pa2015}.

This paper is partially motivated by the desire to understand limits of semitoric
systems which are themselves not semitoric.  One method to do this is to 
study the elements of $\mcompl\setminus\m$ in relation to integrable systems. 
Perhaps some subset of this can be interpreted as 
corresponding to non-simple semitoric systems or to some other type of integrable 
system not included in the classification by 
Pelayo-V\~{u} Ng\d{o}c~\cite{PeVNsemitoricinvt2009, PeVNconstruct2011}. Problem~2.44 
from~\cite{PeVNfirst2012} asks if some integrable systems may be expressed as the 
limit of semitoric systems in an appropriate topology and the study of 
$\mcompl\setminus\m$ may make some progress on this question.

The topology on toric systems~\cite{PePRS2013} allows
Figalli-Pelayo in~\cite{FPe2014} to explore the continuity 
properties of the maximal toric ball packing density function,
$\Om\colon \toric \to [0,1]$, which assigns to each toric system the portion
of the manifold which can be filled by disjoint equivariantly embedded balls.
Now that a topology has been defined on $\semitoric$ questions regarding the 
continuity of functions on $\semitoric$ may be asked.  For instance,
one could attempt to define and study a \emph{maximal semitoric ball
packing density function}, $\Om_{\mathcal{T}}:\semitoric\to[0,1]$,
analogous to the toric case.
The function $\Om_{\mathcal{T}}$ would assign to each semitoric system
the portion of the total volume of the 
manifold which may be filled by disjointly embedded symplectic balls, which
are required to embed in a way that respects the semitoric structure of the manifold.  
To study this, one would have to first determine in what way an embedded ball
should respect the structure of semitoric system.
Once this function is defined, the topology produced in this paper could be
used to study its continuity.

{\it Acknowledgements.}  The author is grateful to his advisor {\'A}lvaro 
Pelayo for proposing the question addressed in this article and for 
providing help and advice on many different occasions.  He is also grateful 
to the anonymous referee who supplied many helpful comments and
for the support of the National Science Foundation under agreements 
DMS-1055897 and DMS-1518420.

\bibliographystyle{amsplain}
\bibliography{biblio}

{\small
  \noindent
  \\
  {\bf Joseph Palmer} \\
  University of California, San Diego\\
  Department of Mathematics\\
  9500 Gilman Drive \#0112\\
  La Jolla, CA 92093-0112, USA.\\
  {\em E\--mail}: \texttt{j5palmer@ucsd.edu} \\
}   

\end{document}